\documentclass[11pt,a4paper]{article}
\usepackage{amsfonts,amsgen,amsmath,amstext,amsbsy,amsopn,amsthm,amsfonts,amssymb,amscd,mathtools}
\usepackage{arcs} 
\usepackage{epsf,epsfig}
\usepackage{float}
\usepackage{ebezier,eepic}
\usepackage{color}
\usepackage{tikz}
\usepackage{multirow}                                \usepackage{subcaption}         
\usepackage{graphicx}
\usepackage{mathrsfs}
\usepackage{graphics}
\usepackage{graphicx}
\usepackage{color}
\usepackage{ifpdf}
\usepackage{fancyhdr}
\usepackage{epstopdf}
\usepackage{epsfig}
\usepackage{indentfirst}
\usepackage{CJK}
\newtheorem{thm}{Theorem}[section]

\newtheorem{lem}[thm]{Lemma}
\newtheorem{false statement}{False statement}
\newtheorem{cor}[thm]{Corollary}

\theoremstyle{definition}

\newtheorem{claim}{Claim}

\makeatletter \@addtoreset{equation}{section}

\usepackage{amsthm} 
\allowdisplaybreaks

\title{}
\author{}

\begin{document}
	\title{On the structure of dense graphs with given odd girth}
	\author{Xingyan Lu\footnote{Department of Mathematics, Jiangsu University, Zhenjiang, Jiangsu 212013, China. E-mail:luxingyan15723@163.com. }\quad\quad
		Shipeng Wang\footnote{Department of Mathematics, Jiangsu University, Zhenjiang, Jiangsu 212013, China. E-mail:spwang22@ujs.edu.cn. Research supported by NSFC No.12001242.} \quad\quad
	\\
	}
	\date{}
	\maketitle
	\maketitle
	
	\begin{abstract}
		A classical theorem of Andrásfai, Erdős, and Sós states that every $n$-vertex graph $G$ with odd girth at least $2k+1$ and minimum degree $\delta(G)>\frac{2n}{2k+1}$ is  bipartite (i.e.,  homomorphic to $K_2$). Messuti and Schacht  proved that the same odd girth condition with $\delta(G)>\frac{3n}{4k}$ forces a homomorphism to $C_{2k+1}$. 
        
        In this paper, we strengthen the above results by showing that every $n$-vertex graph $G$ with odd girth at least $2k+1$ and  minimum degree $\delta(G)>\frac{4n}{6k-1}$ is homomorphic to the  Möbius ladder on $4k$ vertices. This answers a question of  Messuti and Schacht and generalizes a result of  Brandt and Ribe-Baumann.
	\end{abstract}
	
	\noindent{\bf Keywords:} homomorphism; odd girth; Möbius ladder 
	
	\section{Introduction}
   We consider finite and simple graphs without loops and for any notation not defined here we refer to the textbooks~\cite{book2,book1}.
      A {\it homomorphism} from a graph $G$ to a graph $H$ is a mapping $\varphi:V(G)\rightarrow V(H)$ such that $\varphi(u)\varphi(v)\in E(H)$ whenever $uv\in E(G)$. If such a map exists, we say that $G$ is {\it homomorphic} to $H$. 
    For example, $G$ is homomorphic to $K_r$ if and only if $\chi(G)\leq r$.

     For $\ell\geq1,k\geq2$, let $F_{\ell,k}$  be the graph obtained from a cycle of length $(2k-1)(\ell-1)+2$ (an edge, when $\ell=1$) by adding all chords joining vertices at distance of the
form $j(2k-1)+1$ for $j=1,2,\ldots,\lfloor\frac{\ell-1}{2}\rfloor$. This graph is sometimes known as {\it generalized Andr\'{a}sfai graph}. It is not difficult to check that $F_{\ell,k}$ is $\{C_3,C_5,\ldots,C_{2k-1}\}$-free, $\ell$-regular and 3-colorable. 
For $t\geq3$, let $C_t$ be the cycle of length $t$.  For an even integer $r\geq 4$,  a chord in an even cycle $C_r$ is
{\it diagonal} if it joins two vertices at distance $\frac{r}{2}$ in the cycle.
For an even integer $r\geq 6$, we denote by $M_r$ the so-called {\it M\"{o}bius ladder}
(see, e.g., \cite{Möbius ladders}),
i.e., the graph obtained from a cycle $C_r$ by adding all diagonal chords to $C_r$. Note that $F_{2,k}=C_{2k+1}$,  and $F_{3,k}=M_{4k}$, as  shown in Figure~\ref{graphOfIntroM4k+Phi4k3}a.

A classical result of 
Andrásfai, Erdős and Sós~\cite{ErdosC3free} states that every $n$-vertex $K_{r+1}$-free graph $G$ 
 with minimum degree $\delta(G)>\frac{3r-4}{3r-1}n$ is  $r$-colorable.
This result strengthens the following classical consequence of Tur\'{a}n’s
theorem
: every $n$-vertex $K_{r+1}$-free graph $G$ has minimum degree $\delta(G)\leq\frac{r-1}{r}n$. Moreover, the chromatic number $\chi(G)$ is bounded by a constant independent of $n$.
In this direction,
  H\"{a}ggkvist~\cite{HäggkvistC3free} showed that every triangle-free graph $G$ with $\delta(G)>\frac{3n}{8}$ is homomorphic to $C_5$. 
  Jin~\cite{2-L-10Jin1993} 
    extended the result by showing that
every $2\leq \ell\leq10$, every $n$-vertex  triangle-free  graph $G$ with minimum degree  $\delta(G)>\frac{\ell n}{3\ell-1}$ is homomorphic to  $F_{\ell-1,2}$, and accordingly
has chromatic number at most 3.
 Later, Jin~\cite{4color}
 proved that if $G$ is an $n$-vertex triangle-free graph with $\chi(G)\geq 4$ and
 \(\delta(G)\geq\lfloor 10n/29 \rfloor\)
 then 
 $G$ is homomorphic to the  Grötzsch graph.
Extending this line of  work, Chen, Jin and Koh~\cite{k=2case} showed that every triangle-free $n$-vertex graph $G$ with  minimum degree \(\delta(G)>n/3\) satisfies the following: if 
 $\chi(G)\leq 3$ then $G$
is 
homomorphic to \(F_{\ell,2}\) for some $\ell$; if 
 $\chi(G)\geq 4$ then $G$
contains a Grötzsch graph as a subgraph. 

A natural question is whether this behaviour persists when the odd girth is required to be larger. 
 H\"{a}ggkvist and Jin~\cite{HäggkvistC3C5free} proved that every $\{C_3,C_5\}$-free graph $G$ with $\delta(G)>\frac{n}{4}$ is homomorphic to $C_7$. 
 This result was strengthened  by
 Brandt and Ribe-Baumann~\cite{k=3M4k}, who showed that   every $n$-vertex $\{C_3,C_5\}$-free graph $G$ with $\delta(G)>\frac{4n}{17}$ is homomorphic to $M_{12}$.    
     Later, Letzter and Snyder~\cite{k=3case}  showed that 
     for every $\ell\geq2$,  every $n$-vertex $\{C_3,C_5\}$-free graph $G$ with minimum degree $\delta(G)>\frac{\ell n}{5\ell-3}$ is homomorphic to $F_{\ell-1,3}$.
Several related questions were studied in
\cite{Bottcher2023,Luczak2006}.

The classical work of 
Andrásfai, Erdős, and
Sós \cite{ErdosC3free} implies that
every $n$-vertex graph $G$ with odd girth at least $2k+1$ and minimum degree $\delta(G)>\frac{2n}{2k+1}$ is  bipartite.
    Generalizing the results of 
Häggkvist~\cite{HäggkvistC3free} and of Häggkvist and Jin~\cite{HäggkvistC3C5free},
 Messuti and Schacht proved the following.
 

	\begin{thm}[\cite{MessutiC2k+1}]
		For every $k\geq2$,  every $n$-vertex graph $G$ with odd girth at least $2k+1$ and minimum degree $\delta(G)>\frac{3n}{4k}$ is homomorphic to $C_{2k+1}$.
	\end{thm}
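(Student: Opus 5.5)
We follow the strategy of Häggkvist and of Messuti–Schacht: take a shortest odd cycle and map every vertex according to its adjacencies to that cycle. We may assume $G$ is not bipartite; otherwise it maps to an edge of $C_{2k+1}$ and we are done. Since $\frac{3n}{4k}>\frac{2n}{2k+1}$ fails for $k\ge 2$, the Andrásfai–Erdős–Sós bound does not apply directly. So let $C=v_0v_1\cdots v_{2\ell}$ be a shortest odd cycle, with $2\ell+1\ge 2k+1$.

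First we show that $C$ has length exactly $2k+1$. Minimality forces every vertex $x\notin C$ to have at most two neighbours on $C$, and if it has two, they are at distance exactly $2$ along $C$. (If $\ell>k$, the neighbourhoods are even more restricted.) Summing degrees over $C$ then gives
\[
(2\ell+1)\delta\le\sum_{i}d(v_i)\le 2n.
\]
Thus $\delta\le \frac{2n}{2\ell+1}$, and together with $\delta>\frac{3n}{4k}$ this bounds $\ell$. This crude count alone only gives $\ell< \frac{4k}{3}$. To force $\ell=k$ we refine it: we use the neighbourhoods $N(v_i)$ together with the vertices at distance $2$, and the fact that $N(v_i)\cap N(v_{i+2})$ and $N(v_i)\cap N(v_{i+1})=\emptyset$ overlap in a controlled way. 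Alternatively, we can proceed by a minimal-counterexample argument on edge-maximal graphs, where adding any edge creates a short odd cycle.

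Next, with $C$ a $(2k+1)$-cycle, we define $\varphi$ on the vertices that have two neighbours $v_{i-1},v_{i+1}$ on $C$ by $\varphi(x)=v_i$. For the remaining vertices we need more care. By the degree count, the sets $N(v_i)$ cover at least $(2k+1)\delta-n>\frac{(2k+1)3n}{4k}-n$ ``double incidences''. Hence only few vertices have at most one neighbour on $C$.

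The key step is to extend $\varphi$ to all of $G$ and check that it is a homomorphism. We use the standard trick: for each vertex $x$, consider its distances to the $v_i$. Following Messuti–Schacht, we show that every vertex has a neighbour in $N(v_i)\cup N(v_{i+1})\cdots$ such that its ``position'' on the cycle is well defined modulo $2k+1$. Then we verify that adjacent vertices receive adjacent positions; otherwise one finds an odd closed walk of length $<2k+1$.

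The main obstacle is the case where vertices have few neighbours on $C$, so the shortest cycle alone does not determine their image. There the plan is to choose $C$ more cleverly, maximizing $\sum_i|N(v_i)|$ among shortest odd cycles. Then, for a problematic vertex $x$ with neighbour $y$, we apply the degree condition to $N(x)\cup N(y)$ to derive a contradiction from $|N(x)|+|N(y)|+\dots>n$, with the constant $\frac{3}{4k}$ arising exactly from this counting over about $\frac{4k}{3}$ disjoint neighbourhoods.
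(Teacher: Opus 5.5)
\textbf{There is a genuine gap: the step that handles vertices with at most one neighbour on $C$ is missing, and two preliminary claims are false.}

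First, no refinement of the degree count can show that a shortest odd cycle of $G$ has length exactly $2k+1$. For $k\ge 5$ the balanced blow-up of $C_{2k+3}$ has odd girth $2k+3$ and $\delta=\frac{2n}{2k+3}>\frac{3n}{4k}$ (since $8k>6k+9$). The claim becomes true only after you replace $G$ by an edge-maximal supergraph of odd girth at least $2k+1$; this keeps the hypotheses, and a homomorphism of the supergraph restricts to $G$. In that supergraph, if a shortest odd cycle were longer than $2k+1$, joining two of its vertices at distance $3$ would force an even path of length at most $2k-2$ between them. That path gives an odd cycle of length at most $2k+1$. So maximality is the necessary first step, not an alternative.

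Second, write $V_j$ for the set of vertices with exactly $j$ neighbours on $C$. Your count gives only $|V_2|\ge\sum_i d(v_i)-n>\frac{2k+3}{4k}n$. Hence $|V_0\cup V_1|$ can be just under $\frac{2k-3}{4k}n$, close to $n/2$, so these vertices are not ``few''. This is sharp. The balanced blow-up of $M_{4k}$ has $\delta=\frac{3n}{4k}$ and is not homomorphic to $C_{2k+1}$. Taking $C=v_0v_1\cdots v_{2k}v_0$ there, every vertex in the classes of $v_{2k+2},\dots,v_{4k-2}$ has exactly one neighbour on $C$.

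This example shows where the whole difficulty lies, and your key step gives no mechanism for it:
\begin{itemize}
\item A ``position modulo $2k+1$'' is never defined for these vertices.
\item Maximizing $\sum_i|N(v_i)|$ is vacuous, since in a balanced blow-up all $(2k+1)$-cycles have the same degree sum.
\item The count $|N(x)|+|N(y)|+\dots>n$ is never specified. Nothing in your argument uses the strict inequality in a way that distinguishes $G$ from the extremal example.
\end{itemize}

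The missing idea is that $\frac{3}{4k}$ does not come from disjoint neighbourhoods. It comes from locating a configuration on $4k$ vertices into which every vertex of $G$ sends at most three edges, so that $4k\delta\le 3n$.

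The paper cites this theorem rather than proving it, but its lemmas give exactly this route:
\begin{itemize}
\item For maximal $G$ with $\delta>\frac{3n}{4k}>\frac{4n}{6k-1}$ we have $G\in\mathcal{G}_{n,k}$.
\item A vertex with exactly one neighbour on a $(2k+1)$-cycle yields a tetrahedron in $\mathscr{T}_1$ (Lemma~\ref{xCycleIsT}).
\item An induced $C_{6,1}$ or a tetrahedron in $\mathscr{T}_1$ forces an induced copy $M$ of $M_{4k}$ (Corollaries~\ref{C} and~\ref{T}).
\item Every vertex has at most three neighbours in $M$, so $3n\ge\sum_{v\in V(M)}d(v)>4k\cdot\frac{3n}{4k}$, a contradiction.
\item With neither configuration present, the structural Lemma~\ref{NoC61AndTIsC2k+1} of Messuti--Schacht gives the homomorphism to $C_{2k+1}$.
\end{itemize}
Your proposal contains neither the forbidden configurations nor a substitute for that structural lemma, and together these are the entire proof.
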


 In the concluding remarks of~\cite{MessutiC2k+1}, the authors asked whether there exists some  small \(\varepsilon>0\) such that every $n$-vertex $G$ with odd girth at least $2k+1$ and
 minimum degree \(\delta(G) \ge \left(\frac{3}{4k}-\varepsilon\right)n\) is homomorphic to \(M_{4k}\). They further  
conjectured the optimality of bound of  \(\delta(G) > \frac{4n}{6k-1}\), with 
the blow-ups of the $(6k-1)$-cycle 
(where all chords connecting pairs of at distance $2k$ are added) providing the corresponding extremal construction.

In this paper, we give an affirmative answer to the above question and also generalize the result of Brandt and Ribe-Baumann~\cite{k=3M4k}.
 
    \begin{thm}\label{MainResult}
    For every $k\geq 2$, every $n$-vertex graph $G$ with odd girth at least $2k+1$ and  $\delta(G)>\frac{4n}{6k-1}$ is homomorphic to $M_{4k}$. 
	\end{thm}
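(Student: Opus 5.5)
We argue by taking a counterexample and working with a shortest odd cycle. We may assume $G$ is maximal: adding any edge between non-adjacent vertices either creates an odd cycle of length less than $2k+1$ or keeps the hypotheses. Homomorphisms to $M_{4k}$ are preserved under deleting edges, so it suffices to treat such edge-maximal graphs. If $G$ is bipartite we are done, since $M_{4k}$ contains an edge. Otherwise we pick a shortest odd cycle $C=v_0v_1\cdots v_{2\ell}$ with $\ell\ge k$.

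The first step bounds the length of $C$ by a degree count. Because $C$ is a shortest odd cycle, every vertex $x\notin C$ has at most two neighbours on $C$. More precisely, the $C$-neighbours of $x$ lie at distance exactly $2$ along $C$, unless $\ell$ is small relative to $k$. Summing degrees over $C$ gives $\sum_{v\in C}d(v)\le 2(n-|C|)+2|C|\approx 2n$. Combined with $\delta>\frac{4n}{6k-1}$, this yields $(2\ell+1)\cdot\frac{4n}{6k-1}<2n$, hence $2\ell+1<\frac{6k-1}{2}$. So the shortest odd cycle has length at most about $3k-1$.

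The second step sets up a partition. We write $N_i=\{x: xv_i,\ xv_{i+2}\in E\}$ and let $R$ be the vertices with at most one neighbour on $C$. A sharper count that uses the vertices of $R$ shows the following: if $\ell>k$, or if $R$ is large, then some $v_i$ has degree at most $\frac{4n}{6k-1}$, a contradiction. The aim is to reduce to the case where $C$ is a $(2k+1)$-cycle, and to show that $R$ is controlled through the second neighbourhoods of the $v_i$.

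The third step builds the homomorphism. We label $M_{4k}$ as the cycle $u_0u_1\cdots u_{4k-1}$ with diagonals $u_iu_{i+2k}$. It contains the $(2k+1)$-cycles $u_0u_1\cdots u_{2k}u_0$. We map $C$ onto such a cycle and send each class $N_i$ to the image of $v_{i+1}$, which is the standard Häggkvist-type map used for the $C_{2k+1}$ case of Messuti and Schacht. The new feature is the set of vertices that do not fit this pattern, namely vertices "between" the classes. These are the vertices whose neighbourhoods hit sets mapped to images at distance $2$ that are not both neighbours of a common cycle vertex. We send them to the remaining vertices $u_{2k+1},\dots,u_{4k-1}$ of the other half of the ladder. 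Using maximality, every vertex should have a neighbourhood whose image is contained in the neighbourhood of a single vertex of $M_{4k}$; we map the vertex there. We then verify that edges map to edges: two adjacent vertices with images $u_a,u_b$ that are non-adjacent would give, after concatenating with paths through $C$, an odd closed walk of length less than $2k+1$.

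The main obstacle is this verification, and more precisely the claim that every vertex's neighbourhood maps into the neighbourhood of one vertex of $M_{4k}$. In Messuti–Schacht, the degree bound $\frac{3n}{4k}$ forces every vertex to see two classes. Here the bound is weaker, and vertices seeing only one class, or seeing "far" classes, must be handled. The approach I would try first is a weighting argument on a suitably chosen odd cycle or on a pair of vertices $x,y$ at distance $2$. Taking a vertex $w$ whose neighbourhood is badly placed and summing degrees over a set of $6k-1$ vertices (for example $w$ together with a $C_{2k+1}$ and neighbours chosen along two alternative routes) should show that each vertex of $G$ is counted at most $4$ times. Since $(6k-1)\delta>4n$, this gives a contradiction. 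Identifying the right $(6k-1)$-vertex configuration, mirroring the extremal blow-up of the $(6k-1)$-cycle with chords at distance $2k$, is the crux.
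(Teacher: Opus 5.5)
Your proposal has a genuine gap: it stops where the actual proof has to begin. Steps 1--2 at best establish that a shortest odd cycle $C$ has length exactly $2k+1$. That already follows from maximality: if $C$ were longer, the non-edge $v_0v_3$ would force an even $(v_0,v_3)$-path of length at most $2k-2$, and together with $v_0v_1v_2v_3$ this gives an odd closed walk of length at most $2k+1$.

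The hope that a sharper count makes $R$ small is false. The balanced blow-up of $M_{4k}$ satisfies the hypothesis, since $\frac{3}{4k}>\frac{4}{6k-1}$ for $k\ge 2$. Let $C$ run through one vertex of each of the classes $V_0,\dots,V_{2k}$. Then every vertex of $V_{2k+2}\cup\dots\cup V_{4k-2}$ has exactly one neighbour on $C$, so $|R|=\frac{(2k-3)n}{4k}$. The degree sum over $C$ only yields $|R_1|+2|R_0|<\frac{(4k-6)n}{6k-1}$, where $R_j$ is the set of vertices with exactly $j$ neighbours on $C$.

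These vertices form the second half of the ladder. They are adjacent to one another: the classes $V_{2k+i}$ and $V_{2k+i+1}$ are completely joined. Each of them sees only one vertex $v_i$ of $C$, so $C$ alone cannot tell you whether to send it to $u_{i-1}$, $u_{i+1}$ or $u_{2k+i}$. Your rule ``map $x$ to a vertex whose neighbourhood contains the images of $N(x)$'' is therefore circular on $R$. Your description of the vertices sent to $u_{2k+1},\dots,u_{4k-1}$ (neighbours in two classes at distance $2$) also does not match $M_{4k}$. There, all of $u_{2k+2},\dots,u_{4k-2}$ have a single neighbour on $u_0u_1\cdots u_{2k}u_0$.

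The claim that every neighbourhood maps into the neighbourhood of a single vertex of $M_{4k}$ is essentially the whole theorem, and you leave its proof open (``identifying the right $(6k-1)$-vertex configuration is the crux''). The paper does not construct the map directly. It proceeds in three steps:
\begin{itemize}
\item It uses the Messuti--Schacht structure result (Lemma~\ref{NoC61AndTIsC2k+1}): if $G$ has no induced $C_{6,1}$ and no tetrahedron in $\mathscr{T}_1$, then $G$ is homomorphic to $C_{2k+1}$.
\item Otherwise it finds an induced $M_{4k}$. This uses the completion lemmas for a diagonal $\Phi_{4k,1}$ and for $\Phi_{4k,3}$ (Lemmas~\ref{phi4k1isM4k}, \ref{phi4k3ism4k}, \ref{C61hasphi4k3}) and the tetrahedron lemmas (Lemmas~\ref{Tspoke1}, \ref{T1spokeisM4k}).
\item It then shows that a vertex-maximal blow-up of $M_{4k}$ is all of $G$. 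It excludes an outside vertex whose neighbours lie in three classes (via the forbidden graphs $\Psi_1,\Psi_2$ of Lemma~\ref{8vGraphNotExist}), in one class, or in two classes.
\end{itemize}

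Each exclusion is a separate counting argument on a specific configuration. Some configurations have exactly $6k-1$ vertices and every vertex of $G$ has at most $4$ neighbours there, e.g.\ $\Psi_i$ plus three disjoint $(2k-2)$-paths. Others have fewer vertices and use weights such as $4,3,2$ according to the number of neighbours on a chosen path, subtracting the degrees along that path. Your weighting intuition is in the right spirit, but without this dichotomy and these configurations the argument has nothing to act on.
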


\subsection{Organization and Notation}

The rest of the paper is organized as follows. In Section 2, we present two important lemmas. In Sections 3 and 4, we show that two certain configurations force the existence of an $M_{4k}$. In Section 5, we finish the proof of Theorem~\ref{MainResult}.
In the last section, we give some remarks.

\vspace{2mm}

We follow standard notation through. 
Let $G$ be a graph, and let $C$ be a cycle in $G$.
    For any three vertices $x,y,z$ in $C$, we use $C[x,y,z]$ to denote the sub-path of $C$ with endvertices $x,z$ and inner vertex $y$.  
We use  $\overrightarrow {C}$ to denote an orientation of $C$, and  use $\overrightarrow {C}(x,y)$ to denote the sub-path in $C$ from $x$ to $y$ along $\overrightarrow {C}$.  
For any two vertices $u,v$ in $G$, we use $(u,v)$  to denote a path with endvertices $u$ and $v$, and $dist_G(u,v)$ to denote the distance between $u$ and $v$ in $G$, i.e., the number of edges of the shortest $(u,v)$-path in $G$.
  For a subset $U\subseteq V(G)$, we denote by $G[U]$  the subgraph of $G$ induced by $U$. For two subgraphs $F_1,F_2$ in $G$,
the {\it symmetric difference} of \(F_1\) and \(F_2\),  denoted by \(F_1 \oplus F_2\),
    is the edge‑induced subgraph of $G$ on 
     edge set \((E(F_1) \cup E(F_2)) \setminus (E(F_1) \cap E(F_2))\).

For a path \(P := v_1v_2\ldots v_m\)
with \(1 \le i < j \le m\), we write \(v_iPv_j\) (and similarly \(v_jPv_i\)) for the sub-path \(v_i v_{i+1} \ldots v_j\) of \(P\). 
 The number of edges in $P$ (resp. $C$) is 
 called 
 the {\it length}, denoted by
 \(\ell(P)\) (resp. \(\ell(C)\)). 
Given a walk $W$, we define its length $\ell(W)$ as the number of edges, each counted as many times as it appears in the walk.

\begin{figure}[h]
\centering
\begin{minipage}{0.45\textwidth}
\centering
\begin{tikzpicture}[scale=0.6]
\coordinate (v_{4k-1}) at (0,0);
\coordinate (v_{2k+1}) at (5,0);
\coordinate (v_{2k}) at (6,1.732);  
\coordinate (v_{2k-1}) at (5,3.464);  
\coordinate (v_1) at (0,3.464);
\coordinate (v_0) at (-1,1.732);

\draw[thick] (v_{4k-1}) -- (v_{2k+1}) -- (v_{2k}) -- (v_{2k-1}) -- (v_1) -- (v_0) -- cycle;
\draw[thick] (v_{2k}) -- (v_0);

\foreach \point/\pos in {v_{4k-1}/below left, v_{2k+1}/below right, v_{2k-1}/above right, v_1/above left} {
    \node[circle, fill=black, inner sep=1.5pt] at (\point) {};
    \node[\pos] at (\point) {$\point$};
}

\coordinate (v_2) at (1,3.464);  
\coordinate (v_{2k-2}) at (4,3.464);  

\coordinate (v_{4k-2}) at (1,0);  
\coordinate (v_{2k+2}) at (4,0);  

\foreach \point in {v_{2k},v_0,v_{4k-2}, v_{2k+2}, v_2, v_{2k-2}} {
    \node[circle, draw=black, fill=black, inner sep=1.5pt, minimum size=2pt] at (\point) {};
}

\node[below, xshift=2pt] at (v_{4k-2}) {$v_{4k-2}$};
\node[below, xshift=-2pt] at (v_{2k+2}) {$v_{2k+2}$};
\node[above, xshift=2pt] at (v_2) {$v_2$};
\node[above, xshift=-2pt] at (v_{2k-2}) {$v_{2k-2}$};
\node[right] at (v_{2k}) {$v_{2k}$};
\node[left] at (v_0) {$v_0$};

\node[] at (2.5,-1.5) {(a): $M_{4k}$};
\draw[thick] (v_{2k+1}) -- (v_1);
\draw[thick] (v_{2k+2}) -- (v_2);
\draw[thick] (v_{4k-1}) -- (v_{2k-1});
\draw[thick] (v_{4k-2}) -- (v_{2k-2});

\coordinate (dot11) at (2.0,3.464);  
\coordinate (dot12) at (2.5,3.464);  
\coordinate (dot13) at (3.0,3.464);  
\coordinate (dot21) at (2.0,0);  
\coordinate (dot22) at (2.5,0);  
\coordinate (dot23) at (3.0,0);  

\foreach \point in {dot11,dot12,dot13,dot21,dot22,dot23} {
    \node[circle, draw=black, fill=black, inner sep=0.5pt, minimum size=2pt] at (\point) {};
}

\coordinate (dot31) at (2.2,2.598);  
\coordinate (dot32) at (2.5,2.598);  
\coordinate (dot33) at (2.8,2.598);  
\coordinate (dot41) at (2.2,0.866);  
\coordinate (dot42) at (2.5,0.866);  
\coordinate (dot43) at (2.8,0.866);  

\foreach \point in {dot31,dot32,dot33,dot41,dot42,dot43} {
    \node[circle, draw=black, fill=black, inner sep=0.5pt, minimum size=1.5pt] at (\point) {};
}

\end{tikzpicture}
\end{minipage}
\hspace{0.08\textwidth}
\begin{minipage}{0.45\textwidth}
\centering
\begin{tikzpicture}[scale=0.6]
    \coordinate (v_{4k-1}) at (0,0);
    \coordinate (v_{2k+1}) at (5,0);
    \coordinate (v_{2k}) at (6,1.732);  
    \coordinate (v_{2k-1}) at (5,3.464);  
    \coordinate (v_1) at (0,3.464);
    \coordinate (v_0) at (-1,1.732);
    
    \draw[thick] (v_{4k-1}) -- (v_{2k+1}) -- (v_{2k}) -- (v_{2k-1}) -- (v_1) -- (v_0) -- cycle;
    \draw[thick] (v_{2k}) -- (v_0);
    
        \node[circle, fill=black, inner sep=1.5pt] at (v_{4k-1}) {};
        \node[below left] at (v_{4k-1}) {$v_{4k-1}$}; 
        \node[circle, fill=black, inner sep=1.5pt] at (v_{2k+1}) {};
        \node[below right] at (v_{2k+1}) {$v_{2k+1}$}; 
        \node[circle, fill=black, inner sep=1.5pt] at (v_{2k}) {};
        \node[right] at (v_{2k}) {$v_{2k}$}; 
        \node[circle, fill=black, inner sep=1.5pt] at (v_{2k-1}) {};
        \node[above right] at (v_{2k-1}) {$v_{2k-1}$}; 
        \node[circle, fill=black, inner sep=1.5pt] at (v_1) {};
        \node[above left] at (v_1) {$v_1$}; 
        \node[circle, fill=black, inner sep=1.5pt] at (v_0) {};
        \node[left] at (v_0) {$v_0$};

    \draw[thick] (v_{4k-1}) -- (v_{2k-1});
    \draw[thick] (v_{2k+1}) -- (v_1);

\coordinate (v_2) at (1,3.464);  
\coordinate (v_{2k-2}) at (4,3.464);  

\coordinate (v_{4k-2}) at (1,0);  
\coordinate (v_{2k+2}) at (4,0);  


    \node[] at (2.5,-1.5) {(b): $\Phi_{4k,3}$};
    
    \coordinate (dot11) at (2.0,3.464);  
    \coordinate (dot12) at (2.5,3.464);  
    \coordinate (dot13) at (3.0,3.464);  
    \coordinate (dot21) at (2.0,0);  
    \coordinate (dot22) at (2.5,0);  
    \coordinate (dot23) at (3.0,0);  
    
    \foreach \point in {dot11,dot12,dot13,dot21,dot22,dot23} {
        \node[circle, draw=black, fill=black, inner sep=0.5pt, minimum size=2pt] at (\point) {};
    }
\end{tikzpicture}
\end{minipage}
\caption{Möbius ladder $M_{4k}$ and its spanning subgraph $\Phi_{4k,3}$}
\label{graphOfIntroM4k+Phi4k3}
\end{figure}
  We call a $\{C_3,C_5,\ldots,C_{2k-1}\}$-free graph $G$, i.e., $G$ has odd girth at least $2k+1$, is {\it maximal} if adding any edge to $G$ yields an odd cycle of length at most $2k-1$. For integers $k\geq2$ and $n$, we denote by $\mathcal{G}_{n,k}$ the set of all maximal $n$-vertex graphs which satisfy the conditions of the main theorem, i.e., $\mathcal{G}_{n,k}=\{G=(V,E):|V|=n,\delta(G)>\frac{4n}{6k-1}$, and $G$ is a maximal $\{C_3,C_5,\ldots,C_{2k-1}\}$-free graph$\}$.

	\section{Two useful lemmas}
    \begin{lem}\label{2C2k+1isspoke}
        Let $G$ be a graph with odd girth at least $2k+1$ containing two odd cycles $C$ and $C'$ of length $2k+1$ which share a common $(u,v)$-path $P_{uv}$ with $u\neq v$. If $C$
        and $C'$ have a common vertex 
        $w\notin V(P_{uv})$ and the vertices $u,v,w$ appear in this order along both orientations $\overrightarrow {C}$ and $\overrightarrow {C'}$,  then 
        $\overrightarrow {C}(v,w)$ 
       and  $\overrightarrow {C'}(v,w)$ 
       have the same length. Similarly,
     $\overrightarrow {C}(w,u)$ 
       and  $\overrightarrow {C'}(w,u)$ 
have the same length.
    \end{lem}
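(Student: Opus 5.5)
Write $p=\ell(P_{uv})\ge 1$, $a=\ell(\overrightarrow{C}(v,w))$, $b=\ell(\overrightarrow{C}(w,u))$, $a'=\ell(\overrightarrow{C'}(v,w))$ and $b'=\ell(\overrightarrow{C'}(w,u))$. We should also fix the orientations so that $P_{uv}$ is traversed from $u$ to $v$ in both cycles; this is what "$u,v,w$ appear in this order" means. Since both cycles have length $2k+1$, we have $p+a+b=p+a'+b'=2k+1$, and therefore $a+b=a'+b'=2k+1-p$. It suffices to prove $a=a'$, because then $b=b'$ follows. We argue by contradiction: suppose $a\neq a'$, say $a<a'$, which forces $b>b'$.

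The idea is to build two closed walks that avoid $P_{uv}$, one of which is odd and short, and then extract a short odd cycle. The first walk is $W_1=\overrightarrow{C}(v,w)$ followed by the reverse of $\overrightarrow{C'}(v,w)$. It is a closed walk through $v$ and $w$ of length $a+a'$. The second walk is $W_2=\overrightarrow{C}(w,u)$ followed by the reverse of $\overrightarrow{C'}(w,u)$, a closed walk of length $b+b'$.

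Their total length is $(a+b)+(a'+b')=2(2k+1-p)=4k+2-2p$, which is even. So the two lengths have the same parity.

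If $a+a'$ is odd, then $b+b'$ is odd as well. One of the two walks then has odd length at most $\frac{4k+2-2p}{2}=2k+1-p\le 2k$, hence at most $2k-1$. Every closed walk of odd length contains an odd cycle of length at most its own length. This gives an odd cycle of length at most $2k-1$, contradicting the odd girth assumption.

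So we may assume $a+a'$ and $b+b'$ are both even. In that case we use the even walks together with $P_{uv}$ to produce mixed cycles. Consider the closed walk $P_{uv}$, then $\overrightarrow{C}(v,w)$, then $\overrightarrow{C'}(w,u)$; its length is $p+a+b'$. The symmetric closed walk $P_{uv}$, $\overrightarrow{C'}(v,w)$, $\overrightarrow{C}(w,u)$ has length $p+a'+b$. These two lengths sum to $2(2k+1)$, which is even. Moreover, $p+a+b'\equiv p+a+b=2k+1 \pmod 2$, because $b\equiv b'$ (as $b+b'$ is even). So both walks are odd.

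Since $a<a'$ and $b>b'$, we get $a+b'<a'+b$. Hence $p+a+b'<2k+1$, so this walk has odd length at most $2k-1$. It therefore contains an odd cycle of length at most $2k-1$, which is again a contradiction. This proves $a=a'$, and consequently $b=b'$.

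The main obstacle is making sure that odd closed walks legitimately yield short odd cycles. This is the standard fact that any odd closed walk contains an odd cycle of no greater length (decompose at a repeated vertex into two closed walks, one of which is odd, and induct). With that fact in hand, no disjointness assumptions between the paths are needed, and the counting above is the whole proof.
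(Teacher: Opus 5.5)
Your proof is correct and is essentially the paper's argument. In the different-parity case you use the same two odd closed walks $\overrightarrow{C}(v,w)\overleftarrow{C'}(w,v)$ and $\overrightarrow{C}(w,u)\overleftarrow{C'}(u,w)$, of total length $4k+2-2\ell(P_{uv})\le 4k$. In the same-parity case your walk $P_{uv}\,\overrightarrow{C}(v,w)\,\overrightarrow{C'}(w,u)$ is exactly the paper's step of replacing $\overrightarrow{C'}(v,w)$ by the shorter $\overrightarrow{C}(v,w)$ in $C'$. Your check that the orientation hypothesis forces $P_{uv}$ to be the forward $(u,v)$-arc, and your explicit appeal to the fact that an odd closed walk contains an odd cycle of no greater length, just make precise points the paper leaves implicit.
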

    \begin{proof}
        Suppose, to the contrary, that $\ell(\overrightarrow {C}(v,w)) \neq \ell(\overrightarrow {C'}(v,w))$. 
        If $\ell(\overrightarrow {C}(v,w)), \ell(\overrightarrow {C'}(v,w))$ have different parity, then the closed walks $v\overrightarrow {C}(v,w)w\overleftarrow {C'}(w,v)v$ and 
        $w\overrightarrow {C}(w,u)u\overleftarrow {C'}(u,w)w$ are both odd closed walks whose lengths sum up to   $2(2k+1)-2\ell(P_{uv})\leq4k$. Therefore, one of the two closed walks 
        has length at most $2k-1$, a contradiction. 
        If $\ell(\overrightarrow {C}(v,w)), \ell(\overrightarrow {C'}(v,w))$ have the same parity, then we may, without loss of generality,  assume that $\ell(\overrightarrow {C'}(v,w)) \geq \ell(\overrightarrow {C}(v,w))+2$ since $\ell(\overrightarrow {C}(v,w)) \neq \ell(\overrightarrow {C'}(v,w))$. But then replacing $\overrightarrow {C'}(v,w)$  by $\overrightarrow {C}(v,w)$ in $C'$ yields an odd cycle of length at most $2k-1$, a contradiction.
       This proves the lemma.
    \end{proof}

Given $k\geq 2$, let $\Theta_{Q_1PQ_2}$ be a $(2k+1)$-{\it Theta} graph consisting of 
\begin{itemize}
\item[(i)] three internally disjoint $(x,y)$-paths $Q_1,P,Q_2$ such that
\item[(ii)] $Q_1\cup P$ and $Q_2\cup P$ are two odd cycles of length $2k+1$.
\end{itemize}

 For convenience, we often abbreviate  
  $\Theta_{Q_1PQ_2}$ to $\Theta$.
 For any vertex $u\in V(Q_1)\setminus \{x,y\}$ and $v\in V(Q_2)\setminus \{x,y\}$, we define four paths
$$P_{1}=uQ_1xQ_2v,P_{2}:=uQ_1yQ_2v,P_{3}:=uQ_1xPyQ_2v,P_{4}:=uQ_1yPxQ_2v.$$
We call $P_{1},P_{2}$ the {$(u,v)$-\it outer paths}, and  $P_{3},P_{4}$ the {$(u,v)$-\it intersecting paths} in the graph $\Theta$. Clearly, $P_1\cup P_2=Q_1\cup Q_2=P_3\oplus P_4$.

\begin{lem}\label{P1P2P3P4property}
        Let $G$ be a graph with odd girth at least $2k+1$ containing 
        a $(2k+1)$-Theta graph $\Theta_{Q_1PQ_2}$,
        and let $u$ and $v$ be inner vertices of paths $Q_1$ and $Q_2$, respectively.
        If $uv\in E(G)$, then one of the following holds:
         \begin{itemize}
\item[(i)] if the $(u,v)$-outer paths $P_1,P_2$
 both have even length, then $P$ must have length one  and the cycles $uP_1vu,uP_2vu$ both have length $2k+1$;
\item[(ii)] if the $(u,v)$-outer paths $P_1,P_2$ both have odd length, then the $(u,v)$-intersecting paths $P_3,P_4$ have even length, and
one of the cycles $uP_3vu,uP_4vu$ has length $2k+1$ while the other has length $2k+3$.
\end{itemize}  
    \end{lem}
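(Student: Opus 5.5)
The plan is to parametrize all the relevant lengths by how $u$ and $v$ split $Q_1$ and $Q_2$, and then combine parity arguments with the odd girth condition. Write $p:=\ell(P)\ge 1$. Let $a:=\ell(uQ_1x)$ and $b:=\ell(uQ_1y)$; since $u$ is an inner vertex of $Q_1$, we have $a,b\ge 1$. Similarly let $c:=\ell(xQ_2v)$ and $d:=\ell(vQ_2y)$, with $c,d\ge 1$. The two $(2k+1)$-cycles $Q_1\cup P$ and $Q_2\cup P$ of $\Theta$ give
$$a+b+p=2k+1=c+d+p .$$
Reading off the four paths, their lengths are $\ell(P_1)=a+c$, $\ell(P_2)=b+d$, $\ell(P_3)=a+p+d$ and $\ell(P_4)=b+p+c$.

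Because $Q_1,P,Q_2$ are internally disjoint and $u,v$ are inner vertices, each $P_i$ is a genuine $(u,v)$-path. Together with the edge $uv$, each $uP_ivu$ is therefore a cycle of length $\ell(P_i)+1$. The odd girth hypothesis then says: whenever $\ell(P_i)$ is even, $\ell(P_i)+1\ge 2k+1$.

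Next I record the parity identities:
$$\ell(P_1)+\ell(P_2)=2(2k+1-p),\qquad \ell(P_3)+\ell(P_4)=2(2k+1),$$
$$\ell(P_1)+\ell(P_3)=2a+(c+d+p)=2a+2k+1 .$$
The first identity shows $P_1,P_2$ have the same parity. The second shows $P_3,P_4$ have the same parity. The third shows $P_1$ and $P_3$ have opposite parities. Consequently exactly one of the pairs $\{P_1,P_2\}$, $\{P_3,P_4\}$ consists of even paths, so the two cases of the lemma are exhaustive.

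In case (i), the cycles $uP_1vu$ and $uP_2vu$ are odd, so each has length at least $2k+1$. Their total length is $2(2k+1-p)+2=4k+4-2p$, and this must be at least $4k+2$. This forces $p\le 1$, hence $p=1$. The total is then exactly $4k+2$, so both cycles have length exactly $2k+1$.

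In case (ii), $P_1,P_2$ are odd, so $P_3,P_4$ are even by the parity relation above. Then $uP_3vu$ and $uP_4vu$ are odd cycles, each of length at least $2k+1$. Their lengths sum to $2(2k+1)+2=4k+4$. Since both lengths are odd and at least $2k+1$, the only possibility is that one equals $2k+1$ and the other equals $2k+3$.

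The only step needing care is checking that each $uP_ivu$ is a cycle rather than merely a closed walk. This follows directly from the internal disjointness in the definition of $\Theta$ and from $u,v\notin\{x,y\}$. Even without it, an odd closed walk contains an odd cycle of no greater length, so the bounds would still hold. I do not expect a real obstacle here; the argument is a parity-and-counting computation.
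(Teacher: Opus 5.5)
Your proof is correct and follows essentially the same argument as the paper. Both combine the same parity relations with the odd-girth bound $2k+1$ on each odd cycle $uP_ivu$ and the fixed total length. The paper phrases the parity relations through $\ell(P_1\cup P_2)=4k+2-2\ell(P)$ and $P_1\oplus P_3=P\cup Q_2=P_2\oplus P_4$, while you use the explicit segment lengths $a,b,c,d,p$. Your extra checks, that the two cases are exhaustive and that each $uP_ivu$ is a genuine cycle, are harmless and slightly more careful than the paper.
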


\begin{proof}
    Since $\ell (P_1\cup P_2)=4k+2-2\ell(P)$, we infer that $P_1$ and $P_2$ have the same parity. 
     First, suppose that $P_1$ and $P_2$ have even length. If $\ell(P)\geq2$,  then $\ell (P_1\cup P_2)\leq4k-2$. Consequently, one of $P_1,P_2$ has even length at most $2k-2$.     
     Together with the edge $uv$, it yields an odd cycle of length at most $2k-1$, a contradiction. Hence $\ell(P)=1$, and therefore 
      $\ell (P_1\cup P_2)=4k$, which implies that 
     the two odd cycles $uP_1vu,uP_2vu$ both have length $2k+1$.

Now consider that $P_1$ and $P_2$  have odd length.
Since $P_1\oplus P_3=P\cup Q_2=P_2\oplus P_4$, it follows that $P_3$ and $P_4$ have even length.
Note that $P_3$ and $P_4$ cover the two odd cycles $P\cup Q_1$ and $P\cup Q_2$. It follows that $\ell (P_3)+\ell(P_4)=4k+2$. If one of $P_3,P_4$ has length at most $2k-2$, then, together with the edge $uv$, it yields an odd cycle of length at most $2k-1$, a contradiction. So, one of $P_3,P_4$
has length $2k$ and the other one has length $2k+2$. Consequently, one of the cycles $uP_3vu,uP_4vu$ has length $2k+1$ and the other one has length $2k+3$. This proves the lemma.
\end{proof}

\section{Cycles of length $4k$ with many diagonal chords}

Let $C:=v_0v_1\ldots \ldots v_{4k-1}v_0$ be a cycle of length $4k$ $(k\geq 2)$
in a graph $G$.  Here and below addition in the indices of $v_i$ is taken modulo $4k$. 
For any $i\in\{0,\ldots, 4k-1\}$,
we call $\{v_{i-1},v_{i+1},v_{2k+i}\}$ a  {\it diagonal triple} of $C$. 
Set
$U:=\{u\in V(G): u$ has at least three neighbors in $C\}$. Note that $U$ may intersect $V(C)$.
We say that $C$ is {\it diagonal} if for every vertex $u\in U$,
 every triple of neighbors of $u$ in $C$
is of the form 
$\{v_{i-1},v_{i+1},v_{2k+i}\}$.
Let $\Phi_{4k,1},\Phi_{4k,3}$ be two graphs
 obtained from $C$ (called {\it rim cycle} ) by adding edges as follows:
 \begin{itemize}
\item  $\Phi_{4k,1}$ is obtained by adding single edge $v_jv_{2k+j}$;
\item $\Phi_{4k,3}$ is obtained by adding edges
$v_jv_{2k+j},v_{j-1}v_{2k+j-1},v_{j+1}v_{2k+j+1}$, the case $j=0$ is shown in Figure~\ref{graphOfIntroM4k+Phi4k3}b.
\end{itemize}  
 Furthermore, we call 
 $\Phi_{4k,1}$  {\it diagonal}
if for every vertex $u\in U$,
every triple of neighbors of $u$ in $\Phi_{4k,1}$ is either a diagonal triple of $C$, or one of the two triples $\{v_{j-2},v_{j+2},v_{j}\}$ and $\{v_{j+2k-2},v_{j+2k+2},v_{j+2k}\}$. The two sets  
$\{v_{j-2},v_{j+2},v_{j}\}$ and $\{v_{j+2k-2},v_{j+2k+2},v_{j+2k}\}$ are called 
{\it special triples} of $\Phi_{4k,1}$. 
In particular, if the rim cycle $C$ of $\Phi_{4k,1}$ is diagonal, then  $\Phi_{4k,1}$ itself is also diagonal.
Moreover, if $\Phi_{4k,1}$ or its rim cycle $C$ is diagonal, then every vertex $u\in U$ has exactly three neighbors in  $\Phi_{4k,1}$.

   The following lemma shows that any diagonal \(\Phi_{4k,1}\) contained in a graph \(G \in \mathcal{G}_{n,k}\) necessarily contains two additional diagonal chords, thus yielding a \(\Phi_{4k,3}\) graph.

\begin{lem}\label{phi4k1isM4k}
Suppose $G\in \mathcal{G}_{n,k}$ contains a copy $H$ of 
      a diagonal  $\Phi_{4k,1}$
      with the rim cycle $C=v_0v_1\ldots v_{4k-1}v_0$ and
      the diagonal chord $v_{0}v_{2k}$. Then $v_{1}v_{2k+1}$ and $v_{4k-1}v_{2k-1}$ are also diagonal chords
      in $G[V(H)]$. Consequently, 
       $G[V(H)]$ contains a copy of $\Phi_{4k,3}$. 
    \end{lem}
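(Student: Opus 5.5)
We will show that $v_1v_{2k+1}\in E(G)$; the chord $v_{4k-1}v_{2k-1}$ is then obtained by the symmetric argument, reflecting the indices via $i\mapsto -i$. The plan combines a degree-counting argument with the maximality of $G$. Write $d=\delta(G)>\frac{4n}{6k-1}$. First, since $G$ is maximal and the added edge would have to lie on an odd cycle of length at most $2k-1$, it suffices to show that $v_1$ and $v_{2k+1}$ are at odd distance $2k-1$ or more and that adding $v_1v_{2k+1}$ creates no short odd cycle. Contrapositively, we suppose $v_1v_{2k+1}\notin E(G)$. By maximality there is then a $(v_1,v_{2k+1})$-path of even length at most $2k-2$. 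Our main aim is to rule this out using the degree condition together with the structure of the two $(2k+1)$-cycles $v_0v_1\dots v_{2k}v_0$ and $v_{2k}v_{2k+1}\dots v_{4k-1}v_0v_{2k}$. Together with the chord, these form a $(2k+1)$-Theta graph $\Theta$ in which $P$ is the single edge $v_0v_{2k}$.

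Second, we carry out a weighted neighbourhood count. Summing degrees over a carefully chosen multiset $S$ of vertices of $H$ gives $\sum_{x\in S} d(x)>|S|\cdot\frac{4n}{6k-1}$. Every vertex $u\in U$ has exactly three neighbours in $H$, because $H$ is diagonal, and every vertex not in $U$ has at most two. The natural choice is to take all $4k$ rim vertices with weight $1$ and to add extra weight on $v_0,v_{2k},v_1,v_{2k+1},v_{4k-1},v_{2k-1}$, so that the total weight is $6k-1$ while each vertex of $G$ receives at most $4$ units. This comparison forces some vertex $u$ to receive more than $4$ units, that is, $u$ is adjacent to a prescribed heavy triple. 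The triples are chosen so that, by diagonality, the only admissible one is the diagonal triple $\{v_0,v_2,v_{2k+1}\}$ or $\{v_{2k},v_{2k+2},v_1\}$, or a special triple $\{v_{-2},v_2,v_0\}$ or $\{v_{2k-2},v_{2k+2},v_{2k}\}$. Thus $u$ is a common neighbour forcing a specific short path. For example, $u\sim v_{2k+1}$ and $u\sim v_0$ give the path $v_1v_0uv_{2k+1}$ of length $3$.

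Third, we convert the existence of such a $u$ into the chord. We attach $u$ to $\Theta$ and apply Lemma~\ref{P1P2P3P4property} and Lemma~\ref{2C2k+1isspoke} to the new $(2k+1)$-cycles through $u$. This pins down the cycle lengths, showing that $u$ plays the role of the vertex $v_1$ relative to the rim (equivalently, $v_{2k+1}$). Any even $(v_1,v_{2k+1})$-path of length at most $2k-2$ would then combine with $u$ and the rim into an odd closed walk of length at most $2k-1$, which is a contradiction. Hence adding $v_1v_{2k+1}$ creates no short odd cycle, and maximality gives $v_1v_{2k+1}\in E(G)$.

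The main obstacle is the weight assignment in the second step. We must choose weights summing to exactly $6k-1$ so that every vertex of $G$ gets at most $4$ unless it has one of the few allowed triples, and the allowed triples must then actually yield the chord. We would first try weight $2$ on $v_0,v_{2k}$, weight $1.5$ on $v_{\pm1},v_{2k\pm1}$, and weight $1$ elsewhere, adjusting by hand. We would verify case by case, using the diagonal-triple restriction, that no vertex exceeds weight $4$ except via the desired configuration.
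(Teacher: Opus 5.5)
There is a genuine gap, in both the counting step and the conversion step. Start with the counting step. Your trial weights sum to $4k+4$, not $6k-1$, and $v_0$ itself already receives $1.5+1.5+2=5$; more importantly, no tuning can rescue it. Write $T_i=\{v_{i-1},v_{i+1},v_{2k+i}\}$. Each rim vertex lies in exactly three of the $T_i$, so any weighting of $V(H)$ with total $6k-1$ has $\sum_i w(T_i)=18k-3$. You want to deduce that a vertex receiving more than $4$ must see a prescribed triple. For that you need $w(T_i)\le 4$ for every $i\notin\{1,2k+1\}$. The reason is that nothing you know excludes any diagonal triple as a neighbourhood (all of them occur in a blow-up of $M_{4k}$), and $v_0,v_{2k}$ themselves realise $T_0,T_{2k}$. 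That forces $w(T_1)+w(T_{2k+1})\ge 18k-3-4(4k-2)=2k+5$. On the other hand, $T_1\cup T_{2k+1}\subseteq T_0\cup T_{2k}\cup T_3\cup T_{2k+3}$ gives $w(T_1)+w(T_{2k+1})\le 16$. So no admissible weighting on $V(H)$ exists once $k\ge 6$. The underlying reason is that a count over $H$ alone never uses the assumption that the chord is missing.

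The conversion step is also false as stated. If $u\sim v_0,v_2,v_{2k+1}$, the odd $(v_1,v_{2k+1})$-walks through $u$ have length $3$. Together with an even $(v_1,v_{2k+1})$-path of length $2k-2$ they close into walks of length $2k+1$, which odd girth $2k+1$ permits. Only an odd connection of length $1$, i.e.\ the chord itself, would give length at most $2k-1$.

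The missing idea is to put the hypothetical path \emph{inside} the count. Assuming $v_1v_{2k+1}\notin E(G)$, maximality and the path $v_1v_0v_{2k}v_{2k+1}$ give a $(v_1,v_{2k+1})$-path of length exactly $2k-2$. Choose it with maximal overlap with $C$; then Lemma~\ref{2C2k+1isspoke} makes it meet the rim in two end segments $P_1=v_1\ldots v_s$ and $P_2=v_{2k+1}\ldots v_{2k+t}$. The resulting $H'$ has $6k-1-s-t$ vertices, so weight $2$ on $P_1\cup P_2$ and weight $1$ on the rest of $H'$ totals exactly $6k-1$. The real work is to show that every vertex of $G$ receives weight at most $4$: at most four neighbours in $H'$, at most three if one of them lies on $P_1\cup P_2$, and exactly two if two do. The paper proves this using Lemma~\ref{P1P2P3P4property} on suitable Theta graphs. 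The degree bound then gives $4n<\sum_{x\in V(H')}d(x)+\sum_{x\in V(P_1\cup P_2)}d(x)\le 4n$, a contradiction.
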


The following lemma shows that any  \(\Phi_{4k,3}\) contained in a graph \(G \in \mathcal{G}_{n,k}\) necessarily contains more diagonal chords, eventually yielding an \(M_{4k}\) graph.
 
\begin{lem}\label{phi4k3ism4k}
Suppose $G\in\mathcal{G}_{n,k}$  contains a copy $H$ of 
$\Phi_{4k,3}$ with  rim cycle $C=v_0v_1\ldots v_{4k-1}v_0$ and diagonal chords $v_0v_{2k},v_{4k-1}v_{2k-1},v_{1}v_{2k+1}$ . Then $G[V(H)]$ is an induced copy of $M_{4k}$. 
    \end{lem}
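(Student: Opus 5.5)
We plan to prove Lemma~\ref{phi4k3ism4k} in two parts. First, every diagonal chord $v_iv_{2k+i}$ must be present; second, no other edge lies inside $V(H)$. For the second part we only use odd girth. Any chord $v_iv_j$ with $j-i\not\equiv 2k \pmod{4k}$ splits $C$ into two paths of lengths $a$ and $4k-a$. If $a$ is even, one of the two odd cycles it closes has length at most $2k-1$, since $a\le 2k-2$ or $4k-a\le 2k-2$. If $a$ is odd, then $v_iv_j$ together with a suitable diagonal chord gives a short odd cycle. Concretely, combining $v_iv_j$ with $v_iv_{i+2k}$ and the arc from $v_{i+2k}$ to $v_j$ gives a cycle of length $2+|a-2k|\le 2k-1$, once all diagonal chords are known; this cycle is odd because $a$ is odd. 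Hence the only chords are the diagonal ones, and $G[V(H)]$ is induced $M_{4k}$. So the whole difficulty is forcing the remaining diagonals.

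For the first part we proceed by induction, extending the set of consecutive present diagonals. Suppose $v_{j}v_{2k+j}$ is present for $j=-1,0,1$, or more generally for an interval $j\in\{-s,\dots,s\}$. We show that $v_{s+1}v_{2k+s+1}$ is present, and symmetrically $v_{-s-1}v_{2k-s-1}$; after at most $k$ steps all $2k$ diagonals are present. The natural tool is Lemma~\ref{phi4k1isM4k}. Consider the sub-configuration $H'$ consisting of $C$ with the single chord $v_sv_{2k+s}$; this is a $\Phi_{4k,1}$ centred at $j=s$. If $H'$ is diagonal in the paper's sense, Lemma~\ref{phi4k1isM4k} immediately gives $v_{s+1}v_{2k+s+1}$. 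So the induction reduces to verifying that $H'$ is diagonal. That is, for each vertex $u$ with at least three neighbours in $V(C)$, every triple of its neighbours on $C$ must be a diagonal triple or one of the two special triples $\{v_{s-2},v_{s+2},v_s\}$ and $\{v_{s+2k-2},v_{s+2k+2},v_{s+2k}\}$.

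The diagonality check is the main obstacle. Our plan is to take $u\in U$ with three neighbours $v_a,v_b,v_c$ on $C$. A neighbour $u$ of $v_a,v_b$ acts as a path of length $2$ between them. By odd girth, combining this path with the arcs of $C$ and the existing diagonals, consecutive neighbours of $u$ along $C$ must be at even distance. Moreover, that distance must be at least $2k-2$ unless it is exactly $2$. This is because a path of length $2$ plus the shorter arc must avoid short odd cycles once the diagonal chords are used to shortcut. Three neighbours splitting $4k$ into three even gaps, each either $2$ or at least $2k-2$, forces the gaps $(2,2k-1,2k-1)$. These are odd, so this case is impossible and must be excluded. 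The remaining shape, gaps $(2,\,2k-1\pm\ldots)$, amounts to the pattern $\{v_{i-1},v_{i+1},v_{2k+i}\}$, which is a diagonal triple. The only exceptions arise where the available shortcut diagonals run out, namely near the ends of the interval $\{-s,\dots,s\}$. There the parity argument permits the special triples centred at $v_s$ or $v_{s+2k}$, and these are allowed by the definition of diagonal $\Phi_{4k,1}$.

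To make the check rigorous, we will use Lemma~\ref{2C2k+1isspoke} and Lemma~\ref{P1P2P3P4property} on Theta graphs formed by two $(2k+1)$-cycles of the form (arc of length $2k$) $+$ (diagonal chord) sharing the chord region. Neighbours of $u$ give edges between inner vertices of the Theta paths, and Lemma~\ref{P1P2P3P4property} pins down their positions exactly. We expect this case analysis to be tedious: one must handle the positions of $a,b,c$ relative to the known chords, and treat separately the case $u\in V(C)$. It is, however, the heart of the argument. If it turns out that diagonality cannot be verified directly, the fallback is to use maximality of $G$ together with the degree condition $\delta(G)>\frac{4n}{6k-1}$. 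Summing degrees over $v_{s+1},v_{2k+s+1}$ and nearby rim vertices forces a common structure that would make $v_{s+1}v_{2k+s+1}$ addable without creating short odd cycles; maximality then puts it in $G$.
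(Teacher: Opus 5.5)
\textbf{Verdict: genuine gap.} Your inducedness argument and your reduction to Lemma~\ref{phi4k1isM4k} are fine. The problem is that the inductive step fails at its very first instance, and that instance is the whole content of the lemma.

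\textbf{Where the induction breaks.} With only the chords $v_jv_{2k+j}$, $j\in\{-1,0,1\}$, present, applying Lemma~\ref{phi4k1isM4k} to $C+v_1v_{2k+1}$ requires excluding a vertex $u$ adjacent to $v_{4k-2},v_0,v_2$ (or to $v_{2k-2},v_{2k},v_{2k+2}$). These are not special triples for the chord $v_1v_{2k+1}$. Odd girth cannot exclude such a $u$: $H\cup\{u\}$ has no odd cycle shorter than $2k+1$.
\begin{itemize}
\item The chord $v_jv_{j+2k}$ closes a short odd cycle through a vertex adjacent to $\{v_{c-2},v_c,v_{c+2}\}$ exactly when $j\not\equiv c-1,c,c+1\pmod{2k}$. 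Here $c=0$ and all three chords lie in the classes $-1,0,1$.
\item For example, $uv_2v_1v_{2k+1}\cdots v_{4k-2}u$ and $uv_2v_3\cdots v_{2k-1}v_{4k-1}v_0u$ have length exactly $2k+1$.
\end{itemize}
So the exceptional triples that survive odd girth are centred at $v_0$ and $v_{2k}$, the middle of your interval, not at $v_s,v_{s+2k}$ as you assert. They are precisely the triples Lemma~\ref{phi4k1isM4k} forbids. Once four consecutive chords are present, odd girth does rule out every non-diagonal triple, so your induction would run from there; the difficulty is concentrated entirely in obtaining the fourth chord.

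\textbf{The gap analysis is also wrong as written.} Gaps need not be even: a diagonal triple has gaps $2,2k-1,2k-1$. The correct dichotomy is as follows. An odd gap must equal $2k\pm1$, so two odd gaps force a diagonal triple. All-even gap patterns must be excluded using the chords via Lemma~\ref{P1P2P3P4property}, and that is exactly where the two exceptional triples survive.

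\textbf{What closes the gap.} You need the minimum-degree condition, and your fallback does not supply it: it names neither a configuration nor neighbourhood bounds. The paper argues as follows.
\begin{itemize}
\item Assume $u\sim v_0,v_2,v_{4k-2}$. Set $H'=H\cup\{u\}$, which has $4k+1$ vertices, and $P=v_2v_3\cdots v_{2k-1}$, which has $2k-2$ vertices.
\item Use the $(2k+1)$-cycles $D_1=v_1v_{2k+1}\cdots v_{4k-2}uv_2v_1$ and $D_2=v_0uv_2\cdots v_{2k-1}v_{4k-1}v_0$, together with the classification of triples above.
\item Show that every vertex has at most $4$ neighbours in $H'$, at most $3$ if it has a neighbour on $P$, and exactly $2$ if it has two neighbours on $P$.
\item Double counting then gives $\sum_{x\in V(H')}d(x)+\sum_{x\in V(P)}d(x)\le 4n$. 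This contradicts $\delta(G)>\frac{4n}{6k-1}$, since $(4k+1)+(2k-2)=6k-1$.
\end{itemize}
Hence $C$ itself is diagonal. Only after that can Lemma~\ref{phi4k1isM4k} be applied repeatedly to $C+v_iv_{2k+i}$, as you intended.
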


The following result is an easy consequence of Lemmas~\ref{phi4k1isM4k} and~\ref{phi4k3ism4k}.

  \begin{cor}\label{phi4k1isM4kcor}
Suppose $G\in\mathcal{G}_{n,k}$ contains a copy $H$ of a diagonal $\Phi_{4k,1}$. Then $G[V(H)]$ is an induced copy of  $M_{4k}$.
\end{cor}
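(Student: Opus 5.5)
The corollary should follow by chaining Lemma~\ref{phi4k1isM4k} and Lemma~\ref{phi4k3ism4k}; the only real work is to check that the hypotheses of the second lemma are met on the same vertex set.

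Let $H$ be a diagonal $\Phi_{4k,1}$ in $G$ with rim cycle $C=v_0v_1\ldots v_{4k-1}v_0$. After cyclically relabelling the indices of $C$, we may assume the single added diagonal chord is $v_0v_{2k}$. This relabelling is a rotation of $C$, so it keeps the notions of diagonal triple and special triple, and hence diagonality of $\Phi_{4k,1}$. Lemma~\ref{phi4k1isM4k} then gives that $v_1v_{2k+1}$ and $v_{4k-1}v_{2k-1}$ are edges of $G[V(H)]$. The subgraph $H'$ of $G[V(H)]$ with rim cycle $C$ and chords $v_0v_{2k}$, $v_{4k-1}v_{2k-1}$, $v_1v_{2k+1}$ is therefore a copy of $\Phi_{4k,3}$ with $V(H')=V(H)$.

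Lemma~\ref{phi4k3ism4k} requires only that $G\in\mathcal{G}_{n,k}$ contains a copy of $\Phi_{4k,3}$ with these three chords, and no diagonality assumption. Applied to $H'$, it shows that $G[V(H')]$ is an induced copy of $M_{4k}$. Since $V(H')=V(H)$, this gives the conclusion for $G[V(H)]$.

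The only point needing care is that the chords produced by Lemma~\ref{phi4k1isM4k} are exactly those demanded in Lemma~\ref{phi4k3ism4k}, namely the chord at $v_0$ together with its two neighbouring diagonal chords. This holds by construction, so I expect no genuine obstacle.
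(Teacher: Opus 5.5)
Your proposal is correct and matches the paper's route: the paper also obtains this corollary simply by applying Lemma~\ref{phi4k1isM4k} to get the two neighbouring diagonal chords (hence a $\Phi_{4k,3}$ on the same vertex set), and then Lemma~\ref{phi4k3ism4k} to conclude that $G[V(H)]$ is an induced $M_{4k}$. Your remark that a cyclic relabelling moves the chord to $v_0v_{2k}$ without affecting diagonality is a harmless detail the paper leaves implicit.
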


Let \(C_{6,1}\) be the graph obtained from \(C_6\) by adding exactly one diagonal chord. 
 We remark that the authors obtain the following fact in the proof of 
\cite[Lemma 2.1]{MessutiC2k+1}.
 However, for a self contained presentation, we include a proof below.

 \begin{lem}\label{C61hasphi4k3}
           Let $G$ be a maximal $\{C_3,C_5,\ldots,C_{2k-1}\}$-free graph. If $G$ has an induced copy $H$ of $C_{6,1}$ with rim cycle $a_0a_1\ldots a_5a_0$ and diagonal chord $a_1a_4$, then there exist some $(a_0,a_3),(a_2.a_5)$-paths of length $2k-2$. Moreover, for any two $(a_0,a_3),(a_2.a_5)$-paths, say $P_{a_0a_3},P_{a_2a_5}$, of length $2k-2$ in $G$, they cannot intersect. Consequently, $P_{a_0a_3}\cup P_{a_2a_5}\cup H$ is  a copy of $\Phi_{4k,3}$.
   \end{lem}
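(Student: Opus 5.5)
Since $H$ is an induced copy of $C_{6,1}$, the vertices $a_0,a_3$ are non-adjacent in $G$. By the maximality of $G$, adding $a_0a_3$ creates an odd cycle of length at most $2k-1$. Hence $G$ contains an $(a_0,a_3)$-path $P$ of even length at most $2k-2$.

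The first step is to show that this length is exactly $2k-2$. The subgraph $H$ contains the $(a_0,a_3)$-paths $a_0a_1a_2a_3$ and $a_0a_5a_4a_3$, both of length $3$. The closed walk $P\cup a_0a_1a_2a_3$ is odd, of length $\ell(P)+3$. An odd closed walk contains an odd cycle of length at most its own length, and the odd girth is at least $2k+1$. Therefore $\ell(P)+3\ge 2k+1$, so $\ell(P)\ge 2k-2$, and hence $\ell(P)=2k-2$. The same argument applies symmetrically to the non-edge $a_2a_5$. This gives the required paths of length $2k-2$, and shows that every even $(a_0,a_3)$- or $(a_2,a_5)$-walk has length at least $2k-2$.

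For the non-intersection claim, let $P_{a_0a_3}$ and $P_{a_2a_5}$ be arbitrary such paths of length $2k-2$, and suppose they share a vertex $w$. Write the lengths of the four pieces as
\[
\ell(a_0P_{a_0a_3}w)=\alpha,\quad \ell(wP_{a_0a_3}a_3)=2k-2-\alpha,\quad \ell(a_2P_{a_2a_5}w)=\beta,\quad \ell(wP_{a_2a_5}a_5)=2k-2-\beta .
\]
Two closed walks are available. The first is $w\to a_0$ (length $\alpha$), then $a_0a_5$, then $a_5\to w$ (length $2k-2-\beta$), of total length $2k-1+\alpha-\beta$. The second is $w\to a_3$ (length $2k-2-\alpha$), then $a_3a_2$, then $a_2\to w$ (length $\beta$), of total length $2k-1-\alpha+\beta$. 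The two walks have the same parity, since their lengths sum to $4k-2$. If $\alpha-\beta$ is even, both walks are odd, and one of them has length at most $2k-1$, a contradiction.

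If $\alpha-\beta$ is odd, I will use the other two pairings. The first is the walk $w\to a_0$ (length $\alpha$), then $a_0a_1a_2$, then $a_2\to w$ (length $\beta$), of total length $\alpha+\beta+2$. The second is the walk $w\to a_3$ (length $2k-2-\alpha$), then $a_3a_4a_5$, then $a_5\to w$ (length $2k-2-\beta$), of total length $4k-2-\alpha-\beta$. Both are odd and their lengths sum to $4k$. So one of them has length at most $2k-1$, unless both have length exactly $2k+1$... but then the sum would be $4k+2\neq 4k$. Hence one has length at most $2k-1$, again a contradiction. It may be cleaner to use the chord $a_1a_4$ to obtain the even shortcuts $a_0a_1a_4a_5$ or $a_2a_1a_4a_3$ instead; I will check which pairing gives the cleanest sum. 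In either case, two odd walks with total length at most $4k$ cannot both have length at least $2k+1$. This parity bookkeeping is the main obstacle: all parity cases must be covered, including $w$ being an endpoint (e.g.\ $w=a_0$), which I handle the same way with $\alpha=0$.

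Given disjointness, I assemble the copy of $\Phi_{4k,3}$. The interiors of $P_{a_0a_3}$ and $P_{a_2a_5}$ avoid $H$: an interior vertex lying in $H$ would make the path shortcut through $H$ and produce a short odd cycle or a short even walk, contradicting the length bound of the first step. The cycle $a_0P_{a_0a_3}a_3a_2P_{a_2a_5}a_5a_0$ then has length $(2k-2)+1+(2k-2)+1=4k-2$. To obtain the $4k$-cycle I will instead use the rim $a_1a_0P_{a_0a_3}a_3a_4a_5P_{a_5a_2}a_2a_1$, which has length $1+(2k-2)+2+(2k-2)+1=4k$. In this rim, $a_1a_4$, $a_0a_5$ and $a_2a_3$ join antipodal vertices. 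The chord $a_1a_4$ is the middle chord and the other two are its neighbours, which is exactly $\Phi_{4k,3}$.
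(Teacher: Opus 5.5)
Your proof is correct and follows the paper's argument essentially step by step. Maximality together with the two length-$3$ paths in $H$ forces length exactly $2k-2$ and keeps the interiors off $H$, and your two pairings of closed walks (closing with the edges $a_0a_5$, $a_2a_3$ when $\alpha-\beta$ is even, and with the paths $a_0a_1a_2$, $a_3a_4a_5$ when it is odd) are precisely the paper's walks $W_1,W_2$ and $W_3,W_4$. Your explicit rim $a_1a_0P_{a_0a_3}a_3a_4a_5P_{a_5a_2}a_2a_1$, with diagonal chords $a_1a_4$, $a_0a_5$, $a_2a_3$, correctly spells out the copy of $\Phi_{4k,3}$ that the paper leaves implicit; in a final write-up, drop the abandoned $(4k-2)$-cycle and the hedging remarks.
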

   \begin{proof}
       Since $G$ is maximal and $a_0a_3\notin E(G)$, there exist $(a_0,a_3)$-paths of even length at most $2k-2$ in $G$, and let $P_{a_0a_3}$ be one of these paths. Note that $a_0a_5a_4a_3$ is a path of length three in $H$.
       Hence, $P_{a_0a_3}$ must have length exactly $2k-2$ and its inner vertices cannot include  $a_4,a_5$; otherwise together with the path $a_0a_5a_4a_3$ it yields an odd closed walk of length at most $2k-1$,  forcing the existence of an odd cycle of length at most $2k-1$, a contradiction. Notice also that $a_0a_1a_2a_3$ is a path of length three in $H$.
       Similarly, we conclude that the inner vertices of $P_{a_0a_3}$ cannot include $a_1,a_2$. So, the inner vertices of $P_{a_0a_3}$ are disjoint from $V(H)$.

The same reasoning can be applied to the other missing diagonal chord $a_2a_5$ to show that there exists another even path, say $P_{a_2a_5}$, of length $2k - 2$ whose inner vertices are disjoint from $V(H)$.
We now show that $P_{a_0a_3}$ and $P_{a_2a_5}$ are vertex disjoint. Suppose to the contrary, and let $v$ be a common vertex of $P_{a_0a_3}$ and $P_{a_2a_5}$. Consider the walks $W_1=a_0P_{a_0a_3}vP_{a_2a_5}a_5$, $W_2=a_2P_{a_2a_5}vP_{a_0a_3}a_3$.    Clearly,  $\ell(W_1)+\ell(W_2)=4k-4$. Consequently, one of the walks, say $W_1$,
has length at most $2k-2$. If one of $W_1,W_2$, say $W_1$,  has even length, then, together with the edge $a_0a_5$, it yields an
odd closed walk of length at most $2k-1$ and hence a short odd cycle. 
Thus, both $W_1$
and $W_2$ have odd length. In this case, the walks $W_3:=a_0P_{a_0a_3}vP_{a_2a_5}a_2$, $W_4:=a_5P_{a_2a_5}vP_{a_0a_3}a_3$ also have  odd length. 
Since $\ell(W_3)+\ell(W_4)=4k-4$, it follows that one of them, say $W_3$, has odd length at most $2k-3$.
Together with the path $a_0a_1a_2$ it yields an odd closed walk with length at most $2k-1$, again 
forcing the existence of an odd cycle of length at most $2k-1$, a contradiction.
   \end{proof}

The following result is an easy consequence of Lemmas~\ref{phi4k3ism4k} and~\ref{C61hasphi4k3}.

  \begin{cor}\label{C}
Suppose $G\in\mathcal{G}_{n,k}$ contains an induced copy of  $C_{6,1}$. Then
     $G$ contains  an induced copy of $M_{4k}$.
    \end{cor}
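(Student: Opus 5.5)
This is a direct combination of Lemma~\ref{C61hasphi4k3} and Lemma~\ref{phi4k3ism4k}, so the plan is simply to chain them and check that the hypotheses match. Let $H$ be an induced copy of $C_{6,1}$ in $G\in\mathcal{G}_{n,k}$ with rim cycle $a_0a_1\ldots a_5a_0$ and diagonal chord $a_1a_4$. Every $G\in\mathcal{G}_{n,k}$ is by definition a maximal $\{C_3,C_5,\ldots,C_{2k-1}\}$-free graph, so Lemma~\ref{C61hasphi4k3} applies. It yields an $(a_0,a_3)$-path $P_{a_0a_3}$ and an $(a_2,a_5)$-path $P_{a_2a_5}$, each of length $2k-2$. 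These paths are vertex disjoint, their inner vertices avoid $V(H)$, and $H':=P_{a_0a_3}\cup P_{a_2a_5}\cup H$ is a copy of $\Phi_{4k,3}$.

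The only point needing care is relabelling $H'$ so that it has exactly the form required in Lemma~\ref{phi4k3ism4k}. Take the rim cycle
$$C=a_1\,a_0\,P_{a_0a_3}\,a_3\,a_4\,a_5\,P_{a_2a_5}\,a_2\,a_1.$$
Its length is $1+(2k-2)+1+1+(2k-2)+1=4k$, and it is a genuine cycle because the two paths are disjoint and internally avoid $V(H)$.

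Set $v_0:=a_1$, $v_1:=a_0$, and continue along $P_{a_0a_3}$, so that $v_{2k-1}=a_3$, $v_{2k}=a_4$, $v_{2k+1}=a_5$, and then along $P_{a_2a_5}$ reversed, so that $v_{4k-1}=a_2$. In this labelling:
\begin{itemize}
\item the chord $a_1a_4$ is $v_0v_{2k}$;
\item the edge $a_0a_5$ is $v_1v_{2k+1}$;
\item the edge $a_2a_3$ is $v_{4k-1}v_{2k-1}$.
\end{itemize}
These are exactly the three diagonal chords $v_0v_{2k}$, $v_1v_{2k+1}$, $v_{4k-1}v_{2k-1}$ required in Lemma~\ref{phi4k3ism4k}.

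Applying Lemma~\ref{phi4k3ism4k} to $H'$ then shows that $G[V(H')]$ is an induced copy of $M_{4k}$, which proves the corollary. The main obstacle is just verifying this relabelling and cycle length; all the substantive work is already contained in the two lemmas being assumed.
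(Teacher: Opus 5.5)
Your proposal is correct and matches the paper, which states this corollary as an immediate consequence of Lemma~\ref{C61hasphi4k3} and Lemma~\ref{phi4k3ism4k}. Your explicit relabelling ($v_0=a_1$, $v_1=a_0$, $v_{2k-1}=a_3$, $v_{2k}=a_4$, $v_{2k+1}=a_5$, $v_{4k-1}=a_2$) is a faithful spelling-out of the step the paper leaves implicit: it turns $a_1a_4$, $a_0a_5$, $a_2a_3$ into exactly the chords $v_0v_{2k}$, $v_1v_{2k+1}$, $v_{4k-1}v_{2k-1}$ that the second lemma requires.
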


Now we present the proof of Lemmas~\ref{phi4k1isM4k} and~\ref{phi4k3ism4k}.

\begin{proof}[Proof of Lemma~\ref{phi4k1isM4k}]
By contradiction, we may assume, without loss of generality, that
$v_1v_{2k+1}\notin E(G[V(H)])$. 
Since $G$ is maximal and $v_1v_0v_{2k}v_{2k+1}$
is a path of length three in $G[V(H)]$, there exist $(v_1,v_{2k+1})$-paths of length $2k-2$ in $G$ which, together with $v_1v_0v_{2k}v_{2k+1}$, yield odd cycles of length $2k+1$.
 Under all choices of such paths we pick one, say $P_{v_1v_{2k+1}}$, such that $E(C)\cap E(P_{v_1v_{2k+1}})$ has  maximum cardinality.
Let $H':=H\cup P_{v_1v_{2k+1}}$, and let 
$C':=v_1P_{v_1v_{2k+1}}v_{2k+1}v_{2k}v_0v_1$ be an odd cycle of length 
  $2k+1$ in $H'$. 
Since the two odd cycles $v_0v_1\ldots v_{2k}v_0$ and $C'$ have a common edge $v_0v_1$, it follows from  Lemma~\ref{2C2k+1isspoke} that
$\ell(v_1P_{v_1v_{2k+1}}v)=\ell(v_1v_2\ldots v)$ for any common vertex $v$ of odd cycles $C'$ and $v_0v_1\ldots v_{2k}v_0$.
By the choice of $P_{v_1v_{2k+1}}$,
 we conclude that $G[V(P_{v_1v_{2k+1}})\cap \{v_1,\ldots,v_{2k-1}\}]$ is a path. 
 Similarly, since $v_{2k}v_{2k+1}$ is a common edge of odd cycles $C'$ and $v_0v_{2k}v_{2k+1}\ldots v_{4k-1}v_0$, we can conclude that
$G[V(P_{v_1v_{2k+1}})\cap \{v_{2k+1},\ldots,v_{4k-1}\}]$ is also a path. 
So, assume that $G[V(P_{v_1v_{2k+1}})\cap \{v_1,\ldots,v_{2k-1}\}]:=v_1\ldots v_s$  and  $G[V(P_{v_1v_{2k+1}})\cap \{v_{2k+1},\ldots,v_{4k-1}\}]:=v_{2k+1}\ldots v_{2k+t}$
for some
$s,t\in \{1,\dots, 2k-2\}$. Consider the paths
$$P_1=v_1v_2\ldots v_s,P_2=v_{2k+1}v_{2k+2}\ldots v_{2k+t},P_3=v_{s+1}v_{s+2}\ldots v_{2k-1},$$ $$
P_4=v_{2k+t+1}v_{2k+t+2}\ldots v_{4k-1}, Q=P_{v_1v_{2k+1}}-V(P_1\cup P_2),$$
as shown in Figure~\ref{graphOfDiaPhi4k1-M4k1}a. 
Clearly, $P_1\cup P_2$ contains exactly $s+t$ vertices and $H'$ contains exactly $4k+(2k-1-s-t)=6k-1-s-t$ vertices.
Owing to the odd girth assumption, every vertex in $G$ has at most two neighbors on $C'$, and if it has two, then these two neighbors are at distance two on $C'$.

\begin{figure}[h]
\centering

\begin{subfigure}{0.42\textwidth} 
\centering
\begin{tikzpicture}[scale=0.8] 
\coordinate (v_{4k-1}) at (0,0);
\coordinate (v_{2k+1}) at (5,0);
\coordinate (v_{2k}) at (6,1.732);  
\coordinate (v_{2k-1}) at (5,3.464);  
\coordinate (v_1) at (0,3.464);
\coordinate (v_0) at (-1,1.732);
\coordinate (v_2) at (0.67,3.464);  
\coordinate (v_{2k-2}) at (4.33,3.464);  
\coordinate (v_s) at (2,3.464); 
\coordinate (v_{4k-2}) at (0.67,0);  
\coordinate (v_{2k+2}) at (4.33,0);  
\coordinate (v_{2k+t}) at (3,0); 
\coordinate (v_{s+1}) at (2.6,3.464); 
\coordinate (v_j) at (3.4,3.464); 
\coordinate (v_{j+1}) at (3.9,3.464); 
\coordinate (v_{2k+t+1}) at (2.4,0); 

\draw[thick] (v_{2k+t}) -- (v_{2k+1});
\draw[thick] (v_{4k-1}) -- (v_{2k+1}) -- (v_{2k}) -- (v_{2k-1}) -- (v_1) -- (v_0) -- cycle;
\draw[thick] (v_{2k}) -- (v_0);

\draw[thick] (v_1) -- (v_s);
\draw[thick] (v_s) -- (v_{2k+t});
\draw[thick] (v_{2k+1}) -- (v_{2k+t});
\draw[red,line width=1pt] (v_{s+1}) -- (v_{2k-1});
\draw[purple,line width=1pt] (v_{2k+t}) -- (v_{2k+1});
\draw[green,line width=1pt] (v_{2k+t+1}) -- (v_{4k-1});
\draw[cyan,line width=1pt] (v_s) -- (v_1);

\node[circle, fill=black, inner sep=1.5pt] at (v_{s+1}) {};
    \node[above] at (v_{s+1}) {$v_{s+1}$};
    \node[circle, fill=black, inner sep=1.5pt] at (v_{4k-1}) {};
    \node[below left] at (v_{4k-1}) {$v_{4k-1}$};
    \node[circle, fill=black, inner sep=1.5pt] at (v_{2k+1}) {};
    \node[below right] at (v_{2k+1}) {$v_{2k+1}$};
    \node[circle, fill=black, inner sep=1.5pt] at (v_{2k}) {};
    \node[right] at (v_{2k}) {$v_{2k}$};
    \node[circle, fill=black, inner sep=1.5pt] at (v_{2k-1}) {};
    \node[above right] at (v_{2k-1}) {$v_{2k-1}$};
    \node[circle, fill=black, inner sep=1.5pt] at (v_1) {};
    \node[above left] at (v_1) {$v_1$};
    \node[circle, fill=black, inner sep=1.5pt] at (v_0) {};
    \node[left] at (v_0) {$v_0$};

\foreach \point in {v_{2k+t+1},v_s,v_{2k+t}} {
    \node[circle, draw=black, fill=black, inner sep=1.5pt, minimum size=4pt] at (\point) {};
}

\node[below,xshift=3pt] at (v_{2k+t}) {$v_{2k+t}$};
\node[above,xshift=-9pt] at (v_{2k+t+1}) {$v_{2k+t+1}$};
\node[above, xshift=-6pt, yshift=2pt] at (v_s) {$v_s$};

\coordinate (y_1) at (2.1,3.1177);
\coordinate (y_2) at (2.9,0.3464);
\draw[yellow,thick] (y_1) -- (y_2);

\node[cyan,thick] at (0.5,2.7) {$P_{1}$};
\node[green,thick] at (0.5,0.7) {$P_{4}$};
\node[red,thick] at (3.7,2.7) {$P_{3}$};
\node[purple,thick] at (3.7,0.7) {$P_{2}$};
\node[brown,thick] at (2,1.4) {$Q$};

\coordinate (dot51) at (2.4,2.0784);  
\coordinate (dot52) at (2.5,1.732);  
\coordinate (dot53) at (2.6,1.3856);  
\foreach \point in {dot51,dot52,dot53} {
    \node[circle, draw=black, fill=black, inner sep=0.5pt, minimum size=2pt] at (\point) {};
}

\node[] at (2.5,-1.5) {(a) $H'$};
\end{tikzpicture}

\end{subfigure}
\hspace{0.12\textwidth} 
\begin{subfigure}{0.42\textwidth} 
\centering
\begin{tikzpicture}[scale=0.8] 
\coordinate (v_{4k-1}) at (0,0);
\coordinate (v_{2k+1}) at (5,0);
\coordinate (v_{2k}) at (6,1.732);  
\coordinate (v_{2k-1}) at (5,3.464);  
\coordinate (v_1) at (0,3.464);
\coordinate (v_0) at (-1,1.732);
\coordinate (v_2) at (0.67,3.464);  
\coordinate (v_{2k-2}) at (4.33,3.464);  
\coordinate (v_s) at (2,3.464); 
\coordinate (v_{4k-2}) at (0.67,0);  
\coordinate (v_{2k+2}) at (4.33,0);  
\coordinate (v_{2k+t}) at (3,0); 
\coordinate (v_{j-1}) at (3.0,3.464); 
\coordinate (v_j) at (3.5,3.464); 
\coordinate (v_{j+1}) at (4.0,3.464); 
\coordinate (v_{2k+j}) at (1.6,0); 

\draw[thick] (v_{2k+t}) -- (v_{2k+1});
\draw[thick] (v_{4k-1}) -- (v_{2k+1}) -- (v_{2k}) -- (v_{2k-1}) -- (v_1) -- (v_0) -- cycle;
\draw[thick] (v_{2k}) -- (v_0);


\coordinate (x) at (3.8,2.45);

\coordinate (y) at (2.35,2.2516);
\coordinate (y_1) at (2.25,2.598);
\coordinate (y_2) at (2.45,1.9052);
\draw[thick] (y_2) -- (y_1);

\draw[thick] (x) -- (y_1);
\draw[thick] (x) -- (y_2);
\draw[thick,dashed] (x) -- (v_{j-1});
\draw[thick] (x) -- (v_{j+1});

\coordinate (y_12) at (2.29,2.598);
\coordinate (v_s2) at (2.04,3.464); 

\coordinate (y_22) at (2.5,1.9052);
\coordinate (v_{2k+t}2) at (3.05,0);

\draw[purple,line width=1.5pt] (3.8,2.4) -- (2.45,1.8552);
\draw[purple,line width=1.5pt] (y_22) -- (v_{2k+t}2) -- (v_{2k+1}) -- (v_{2k}) -- (v_{2k-1}) -- (v_{j+1});

\draw[cyan,line width=1.5pt] (3.8,2.5) -- (2.25,2.648);
\draw[cyan,line width=1.5pt] (y_12) -- (v_s2) -- (v_{j+1});

\draw[red,line width=1.5pt] (3.8,2.4) -- (2.25,2.548);
\draw[red,line width=1.5pt] (2.21,2.598) -- (1.96,3.464) -- (v_1) -- (v_0) -- (v_{4k-1}) -- (v_{2k+j});

\draw[green,line width=1.5pt] (3.8,2.5) -- (2.45,1.9552);
\draw[green,line width=1.5pt] (2.4,1.9052) -- (2.95,0) -- (v_{2k+j});

\draw[thick] (x) -- (v_{2k+j});
\node[circle, fill=black, inner sep=1.5pt] at (v_{2k+j}) {};
\node[below, xshift=2pt] at (v_{2k+j}) {$v_{2k+j}$};

\node[circle, fill=black, inner sep=1.5pt] at (v_{j-1}) {};
\node[above, xshift=-2pt] at (v_{j-1}) {$v_{j-1}$};

\node[circle, fill=black, inner sep=1.5pt] at (v_{j+1}) {};
\node[above, xshift=2pt] at (v_{j+1}) {$v_{j+1}$};

\node[circle, fill=black, inner sep=1.5pt] at (v_{4k-1}) {};
\node[below left] at (v_{4k-1}) {$v_{4k-1}$};

\node[circle, fill=black, inner sep=1.5pt] at (v_{2k+1}) {};
\node[below right] at (v_{2k+1}) {$v_{2k+1}$};

\node[circle, fill=black, inner sep=1.5pt] at (v_{2k}) {};
\node[right, xshift=5pt, yshift=2pt] at (v_{2k}) {$v_{2k}$};

\node[circle, fill=black, inner sep=1.5pt] at (v_{2k-1}) {};
\node[above right, xshift=-5pt, yshift=2pt] at (v_{2k-1}) {$v_{2k-1}$};

\node[circle, fill=black, inner sep=1.5pt] at (v_1) {};
\node[above left, xshift=2pt] at (v_1) {$v_1$};

\node[circle, fill=black, inner sep=1.5pt] at (v_0) {};
\node[left, xshift=-2pt] at (v_0) {$v_0$};

\node[circle, fill=black, inner sep=1.5pt] at (v_j) {};

\foreach \point in {v_s,v_{2k+t}} {
    \node[circle, draw=black, fill=black, inner sep=1.5pt, minimum size=4pt] at (\point) {};
}

\node[below] at (v_{2k+t}) {$v_{2k+t}$};
\node[above, xshift=-6pt, yshift=2pt] at (v_s) {$v_s$};

\node[circle, draw=black, fill=black, inner sep=2pt, minimum size=4pt] at (x) {};
\node[right, xshift=2pt] at (x) {$x$};

\foreach \point in {y,y_1, y_2} {
    \node[circle, draw=black, fill=black, inner sep=1.5pt, minimum size=4pt] at (\point) {};
}
\node[left, xshift=-3pt] at (y_1) {$y_1$};
\node[left, xshift=-3pt] at (y_2) {$y_2$};

\node[red,thick] at (0.5,2.7) {\Large$P_{3}'$};
\node[green,thick] at (1.8,1) {\Large$P_{4}'$};
\node[cyan,thick] at (2.8,2.9) {$P_{1}'$};
\node[purple,thick] at (4.2,0.7) {\Large$P_{2}'$};

\node[] at (2.5,-1.5) {(b) The proof of Claim~\ref{P4k4}};
\end{tikzpicture}

\end{subfigure}
\caption{The proof of Lemma~\ref{phi4k1isM4k}}
\label{graphOfDiaPhi4k1-M4k1}
\end{figure}

\begin{claim}\label{P4k4}
For any vertex $x$ in $G$, it holds that $|N(x)\cap V(H')|\leq 4$.
\end{claim}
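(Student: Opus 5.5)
The plan is to argue by contradiction, assuming some vertex $x$ has at least five neighbours in $V(H')$. The basic tool is the observation already recorded before the claim. Every $(2k+1)$-cycle $D$ of $G$ receives at most two neighbours of $x$, and if it receives two, they are at distance exactly two on $D$. Otherwise $x$ together with the shorter arc of $D$ between two of its neighbours closes an odd cycle of length at most $2k-1$.

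First I will list enough $(2k+1)$-cycles inside $H'$ to cover $V(H')$. The obvious ones are $C_1:=v_0v_1\ldots v_{2k}v_0$, $C_2:=v_0v_{2k}v_{2k+1}\ldots v_{4k-1}v_0$ and $C'$. By Lemma~\ref{2C2k+1isspoke}, applied as in the text, $\ell(v_1P_{v_1v_{2k+1}}v_s)=s-1$ and the analogous equality holds on the $P_2$ side. So $Q$ has length $2k-1-s-t+2$, and the mixed cycles obtained by switching are again odd of length $2k+1$. One is $D_1:=v_sQv_{2k+t}P_2v_{2k+1}v_{2k}v_{2k-1}P_3v_{s+1}v_s$, which is $C_1$ with the arc through $v_0,P_1$ replaced. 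The other is $D_2:=v_sP_1v_1v_0v_{4k-1}P_4v_{2k+t+1}v_{2k+t}Qv_s$. (Equivalently, $H'$ is a union of $(2k+1)$-Theta graphs with the spokes $v_0v_{2k}$ and $Q$.)

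Next comes the counting. Since $C'$ contains $v_0,v_1,v_{2k},v_{2k+1}$, $P_1$, $P_2$ and $Q$, at most two neighbours of $x$ lie there. Hence at least three lie in $V(P_3)\cup V(P_4)$. As $P_3\subseteq C_1$ and $P_4\subseteq C_2$, each of $P_3,P_4$ carries at most two. So, up to symmetry, $x$ has two neighbours $v_{j-1},v_{j+1}$ on $P_3$ (at distance two on $C_1$) and at least one neighbour on $P_4$. Arguing similarly with $D_1$, which contains $P_3$ and $Q$, $x$ then has no neighbour on $Q\cup P_2$. Arguing with $D_2$, which contains $P_4$, $P_1$ and $Q$, the neighbour on $P_4$ is the only neighbour on $D_2$, so no neighbour lies on $P_1\cup\{v_0\}$. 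This pins the fifth neighbour, and the $P_4$ neighbour, to very specific positions, with its location determined through the distance-two conditions on $C_2$ and $D_2$.

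Finally, with $x\sim v_{j-1},v_{j+1}$ and $x\sim v_{2k+j'}$ on $P_4$, I will exhibit a short odd closed walk through $x$, following Figure~\ref{graphOfDiaPhi4k1-M4k1}b. I will compare the four walks from $x$ that go around via $P_1'$ (through $v_{j+1}$ back along $P_3$ to $v_s$ and into $Q$), $P_2'$ (through $v_{j+1}$, $v_{2k-1}$, $v_{2k}$, $v_{2k+1}$, $P_2$ into $Q$), $P_3'$ and $P_4'$. These are exactly an outer/intersecting path configuration for the Theta graph formed by $C'$ and the cycle through $Q$. Their lengths sum to twice the total length of the two $(2k+1)$-cycles, so by the parity bookkeeping of Lemma~\ref{P1P2P3P4property}, one of them, closed up with the edges at $x$, gives an odd cycle of length at most $2k-1$. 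This is a contradiction.

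The main obstacle is the case analysis in the middle step. I must carefully track the exact positions allowed by the distance-two constraints, including degenerate cases such as $s=1$ or $t=1$, neighbours at $v_{2k-1}$ or $v_{4k-1}$, and neighbours of $x$ among the endpoints $v_s,v_{2k+t}$. I also need to check that the walks used really have the parity claimed. I would first attempt the generic configuration of the figure and then verify that the boundary cases reduce to it by symmetry $v_i\mapsto v_{i+2k}$.
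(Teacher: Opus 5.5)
There is a genuine gap in your middle step: the ``mixed'' cycles $D_1$ and $D_2$ are not $(2k+1)$-cycles. You also have an off-by-one: the segment of $P_{v_1v_{2k+1}}$ from $v_s$ through $Q$ to $v_{2k+t}$ has length $2k-2-(s-1)-(t-1)=2k-s-t$. With the correct lengths,
\[
\ell(D_1)=(2k-s-t)+(t-1)+2+(2k-s-2)+1=4k-2s,\qquad \ell(D_2)=(s-1)+2+(2k-t-2)+1+(2k-s-t)=4k-2t,
\]
and both are even. This is no accident. $D_1=C_1\oplus C'$, where $C_1$ and $C'$ share the path $v_sv_{s-1}\ldots v_0v_{2k}$, and $D_2=C_2\oplus C'$, where $C_2$ and $C'$ share $v_0v_{2k}v_{2k+1}\ldots v_{2k+t}$. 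Checking the four cycles of $H'$ through $Q$ shows that $C'$ is the only $(2k+1)$-cycle of $H'$ through $Q$. So your conclusions ``no neighbour on $Q\cup P_2$'' and ``no neighbour on $P_1\cup\{v_0\}$'' are unfounded. Your final step is also inconsistent with them, because $P_1',P_2'$ run through neighbours $y_1,y_2$ of $x$ on $Q$, which the middle step claimed to exclude.

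The configuration that actually has to be ruled out is this: $x$ is adjacent to $v_{j-1},v_{j+1}$ on $P_3$, to $v_{2k+j}$ on $P_4$, and to $y_1,y_2\in V(Q)$ with $dist_Q(y_1,y_2)=2$. It satisfies the distance-two condition on each of $C_1$, $C_2$ and $C'$, so no count of neighbours on $(2k+1)$-cycles of $H'$ excludes it. Two ingredients are missing.

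\textbf{Diagonality of $H$.} You never use that $H$ is a \emph{diagonal} $\Phi_{4k,1}$. This hypothesis gives $|N(x)\cap V(H)|\le 3$. Hence $x$ has exactly two neighbours on $Q$, and its three neighbours in $H$ avoid $C'$. They therefore form a diagonal triple $\{v_{j-1},v_{j+1},v_{2k+j}\}$ in $P_3\cup P_4$, since the special triples meet $C'$. Odd girth alone would also allow the third neighbour to be $v_{2k+j'}$ with $j+j'=2k\pm1$, and then the parity transfer below breaks down.

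\textbf{Two Theta graphs, not one.} Let $\Theta_1$ be formed by $v_s\ldots v_{2k}$, by $v_sv_{s-1}\ldots v_0v_{2k}$, and by $v_sP_{v_1v_{2k+1}}y_1xy_2P_{v_1v_{2k+1}}v_{2k+1}v_{2k}$. Its middle path has length $s+1\ge 2$. By Lemma~\ref{P1P2P3P4property}, the outer paths $P_1',P_2'$ from $x$ to $v_{j+1}$ must therefore be odd. Because the triple is diagonal, $P_1'\oplus P_3'$ and $P_2'\oplus P_4'$ are paths of lengths $2k+1$ and $2k-1$, so $P_3'$ and $P_4'$ are even. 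But $P_3',P_4'$ are the outer paths from $x$ to $v_{2k+j}$ in a second Theta graph $\Theta_2$, formed by $v_0v_1P_{v_1v_{2k+1}}y_1xy_2P_{v_1v_{2k+1}}v_{2k+t}$, by $v_0v_{2k}v_{2k+1}\ldots v_{2k+t}$, and by $v_0v_{4k-1}\ldots v_{2k+t}$. Its middle path has length $t+1\ge 2$, which contradicts Lemma~\ref{P1P2P3P4property}(i). This two-Theta parity argument, together with diagonality, is what the paper's proof uses, and your proposal needs it in place of the invalid counting step.
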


\begin{proof}
Suppose not, and let $x$ be a vertex in $G$
such that $|N(x)\cap V(H')|\geq 5$. Note that $V(H')=V(C')\cup V(H)$. Since  $x$
has at most two neighbors in $C'$, it must have at least three other neighbors in $H$. Because $H$ is a diagonal
$\Phi_{4k,1}$, $x$ has exactly three neighbors in $H$. This implies that $x$ has exactly five neighbors in $H'$, and two of them, say $y_1,y_2$, are on
the path $Q$ with $dist_{Q}(y_1,y_2)=2$ while the other three, say $y_3,y_4,y_5$, are in $P_3\cup P_4$. Clearly, $\{y_3,y_4,y_5\}$ is a diagonal triple of $C$.
Without loss of generality, we may assume that $y_3=v_{j-1},y_4=v_{j+1},y_5=v_{2k+j}$ with $j\in \{s+2,\ldots,2k- 2\}$, i.e., $y_3,y_4$ are on the path $P_3$ and $y_5$ is on the path $P_4$.

Consider the paths  $$P_1'=v_{j+1}v_{j}\ldots v_sP_{v_1v_{2k+1}}y_{1}x,\text{ }P_2'=v_{j+1}v_{j+2}\ldots v_{2k+1}P_{v_1v_{2k+1}}y_2x,$$$$P_3'=v_{2k+j}v_{2k+j+1}\ldots v_{4k-1}v_0v_1P_{v_1v_{2k+1}}y_{1}x,\text{ }P_4'=v_{2k+j}v_{2k+j-1}\ldots v_{2k+t}P_{v_1v_{2k+1}}y_2x.$$
Note that the three paths $v_sv_{s+1}\ldots v_{2k},$ $v_sv_{s-1}\ldots v_0v_{2k}$ and $v_sP_{v_1v_{2k+1}}y_1xy_2P_{v_1v_{2k+1}}v_{2k+1}v_{2k}$
form a $(2k+1)$-Theta graph, say $\Theta_1$. 
Clearly, $P_1',P_2'$ are the two $(x,v_{j+1})$-outer paths of $\Theta_1$. 
By Lemma~\ref{P1P2P3P4property}, 
if  $P_1',P_2'$ both have even length, then the cycle 
$xP_{1}'v_{j+1}x$ has length $2k+1$, but now $xv_{j-1}$ is a chord of $xP_{1}'v_{j+1}x$, a contradiction. 
So, by Lemma~\ref{P1P2P3P4property}, $P_1',P_2'$ both have odd length. Recall that $\{y_3,y_4,y_5\}$ is a diagonal triple of $C$. It follows that $P_1'\oplus P_3'$ is a path of length $2k+1$ and $P_2'\oplus P_4'$ is a path of length $2k-1$, which implies that $P_3',P_4'$ both have even length.

On the other hand, the three paths $v_0v_1P_{v_1v_{2k+1}}y_1xy_2P_{v_1v_{2k+1}}v_{2k+t}$, $v_0v_{2k}v_{2k+1}\ldots v_{2k+t}$ and $v_0v_{4k-1}v_{4k-2}\ldots v_{2k+t}$
form a $(2k+1)$-Theta graph, say $\Theta_2$. Note that $P_3',P_4'$ are the two $(x,v_{2k+j})$-outer paths of $\Theta_2$. So, by Lemma~\ref{P1P2P3P4property}, the path $v_0v_{2k}v_{2k+1}\ldots v_{2k+t}$ has length one, which is impossible. 
This proves the claim.
\end{proof}

\begin{claim}\label{P4k3}
For any vertex $x$ in $G$, if $x$ has at least one neighbor in $P_1\cup P_2$ then $|N(x)\cap V(H')|\leq 3$. 
\end{claim}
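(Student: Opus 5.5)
The plan is to argue by contradiction, as in Claim~\ref{P4k4}: first pin down the exact configuration of $N(x)\cap V(H')$, then destroy it with short odd closed walks or with Lemma~\ref{P1P2P3P4property} applied to a suitable $(2k+1)$-Theta graph. Suppose $x$ has a neighbour in $V(P_1\cup P_2)$ and $|N(x)\cap V(H')|\ge 4$.

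\textbf{Step 1: reduce to one configuration.}
\begin{itemize}
\item Recall that $V(H')=V(C')\cup V(P_3\cup P_4)$, and that by the odd girth $x$ has at most two neighbours on $C'$.
\item The vertices of $P_1\cup P_2$, together with $v_0,v_{2k}$ and those of $P_3\cup P_4$, all lie on $C$, so they are vertices of $H$. Since $H$ is a diagonal $\Phi_{4k,1}$, $x$ has at most three neighbours in $H$.
\item If $x$ had at most one neighbour on $C'$, it would have at least three neighbours in $P_3\cup P_4$. Together with its neighbour in $P_1\cup P_2\subseteq V(C')$, this gives four neighbours in $H$, which is impossible.
\item Hence $x$ has exactly two neighbours on $C'$: a vertex $z\in V(P_1\cup P_2)$ and a vertex $y$ with $dist_{C'}(z,y)=2$. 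It also has at least two neighbours in $P_3\cup P_4$.
\item So $x$ has three neighbours on $C$. If $y\in V(C)$, $x$ would have four neighbours in $H$, so $y$ is an inner vertex of $Q$ and $x$ has exactly two neighbours in $P_3\cup P_4$.
\item Therefore $N(x)\cap V(C)$ is a diagonal or special triple containing $z$ and two vertices of $P_3\cup P_4$, and $y$ is the vertex of $Q$ at distance two from $v_s$ or from $v_{2k+t}$. In particular $z\in\{v_s,v_{2k+t}\}$.
\end{itemize}

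\textbf{Step 2: list the cases.} By the symmetry between the two halves, assume $z=v_s$, so the neighbours of $x$ on $C$ are $v_s$ and two vertices of $P_3\cup P_4$. The possibilities are:
\begin{itemize}
\item $\{v_s,v_{s+2},v_{2k+s+1}\}$, where $v_s$ plays the role of $v_{i-1}$;
\item $v_s$ is the "far" vertex $v_{2k+i}$ of a diagonal triple;
\item a special triple, which involves $v_{\pm 2}$ or $v_{2k\pm 2}$; using $s\ge 1$ and the positions of $P_3,P_4$, most of these can be excluded immediately.
\end{itemize}
Parity bookkeeping is done with Lemma~\ref{2C2k+1isspoke}, which gives $\ell(v_1P_{v_1v_{2k+1}}v_s)=s-1$ and, symmetrically, $\ell(v_{2k+1}P_{v_1v_{2k+1}}v_{2k+t})=t-1$.

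\textbf{Step 3: kill each case.} The first tool is short odd closed walks. For instance, in the first case $x v_{2k+s+1}$ together with $x v_s$ or $xy$ and the arcs of $C$ or $C'$ gives closed walks whose total length is about $4k$. One of them must then be odd and of length at most $2k-1$, unless the lengths balance exactly. In that balanced situation the second tool applies: mimic Claim~\ref{P4k4} with two Theta graphs.
\begin{itemize}
\item $\Theta_1$ is formed by the three $(v_s,v_{2k})$-paths $v_s\ldots v_{2k}$, $v_s\ldots v_0v_{2k}$, and $v_sP_{v_1v_{2k+1}}y\,x\ldots$ rerouted through $x$; the last option is available when $y$ lies between $v_s$ and $v_{2k+t}$ on $Q$.
\item $\Theta_2$ is the analogous Theta graph with endpoints $v_0$ and $v_{2k+t}$.
\end{itemize}
In $\Theta_1$, Lemma~\ref{P1P2P3P4property} applied to the edge from $x$ to its neighbour in $P_3$ forces either a $(2k+1)$-cycle through $x$ having a chord (excluded, as in Claim~\ref{P4k4}) or a prescribed parity. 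That parity then makes the outer paths of $\Theta_2$ even, which forces a Theta path of length one, a contradiction.

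I expect the main obstacle to be the bookkeeping in this last step. For each admissible position of $y$ on $Q$ and each admissible triple, one has to verify that the relevant paths really are internally disjoint and that the lengths work out. I would organise it by the position of $y$, namely whether $y$ is adjacent to $v_s$'s successor on $Q$ or is the second vertex towards $v_{2k+t}$, and by the parity information from Lemma~\ref{2C2k+1isspoke}. As a backup, the choice of $P_{v_1v_{2k+1}}$ maximising $|E(C)\cap E(P_{v_1v_{2k+1}})|$ can be used: if rerouting through $x$ and the cycle $C$ produces a valid $(v_1,v_{2k+1})$-path of length $2k-2$ with more edges on $C$, that is a contradiction.
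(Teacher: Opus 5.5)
There is a genuine gap, in two places.

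First, the last bullet of Step~1 is false. From $dist_{C'}(z,y)=2$ with $y$ an inner vertex of $Q$, it does not follow that $z\in\{v_s,v_{2k+t}\}$. For $s\ge 2$, the vertex $v_{s-1}$ is at distance two on $C'$ from the first inner vertex of $Q$, via $v_s$; symmetrically, $v_{2k+t-1}$ is at distance two from the last one. When the $(v_s,v_{2k+t})$-segment of $P_{v_1v_{2k+1}}$ has length two, these are the only possibilities. So Step~2 silently drops configurations such as $N(x)\cap V(H')=\{v_{s-1},v_{s+1},v_{2k+s},y\}$ with $y$ the successor of $v_s$, and nothing later covers them.

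Second, Step~3 never actually reaches a contradiction. The walks "of total length about $4k$'' are not written down. Your $\Theta_1$ is also not well defined: unlike in Claim~\ref{P4k4}, $x$ has only one neighbour on $Q$, so a path $v_sP_{v_1v_{2k+1}}y\,x\ldots$ cannot re-enter $Q$ and continue to $v_{2k}$. The proposal stops at "bookkeeping'' exactly where the proof should be.

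The missing idea is a one-line length count, and it makes Theta graphs and the maximality of $|E(C)\cap E(P_{v_1v_{2k+1}})|$ unnecessary. This is how the paper argues.
\begin{itemize}
\item By symmetry, write $z=v_j$ with $1\le j\le s$. The length-two $(v_j,y)$-path on $C'$ must run forward along $P_{v_1v_{2k+1}}$, since going backward reaches $Q$ only after $v_0,v_{2k}$ and $P_2$.
\item Since $\ell(v_1P_{v_1v_{2k+1}}v_j)=j-1$ and $\ell(P_{v_1v_{2k+1}})=2k-2$, it follows that $\ell(yP_{v_1v_{2k+1}}v_{2k+t})=2k-2-t-j$.
\item The triple $\{v_j\}\cup (N(x)\cap V(P_3\cup P_4))$ contains neither $v_0$ nor $v_{2k}$, so it is a genuine diagonal triple. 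Two neighbours on $P_3$ are impossible, because $P_3\cup\{v_j\}$ lies on the $(2k+1)$-cycle $v_0v_1\ldots v_{2k}v_0$.
\item Hence the triple is either $\{v_j,v_{2k+j-1},v_{2k+j+1}\}$ with both of the latter on $P_4$, or $\{v_j,v_{j+2},v_{2k+j+1}\}$ with $v_{2k+j+1}\in V(P_4)$.
\item In these two cases, $xyP_{v_1v_{2k+1}}v_{2k+t}v_{2k+t+1}\ldots v_{2k+j-1}x$ has length $2k-1-2t$, respectively $xyP_{v_1v_{2k+1}}v_{2k+t}\ldots v_{2k+j+1}x$ has length $2k+1-2t$.
\item Both lengths are odd and below $2k+1$ because $t\ge 1$, which contradicts the odd girth.
\end{itemize}
This is exactly the closed walk you gesture at in Step~3. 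It never "balances'', and it handles $j=s-1$ and $j=s$ uniformly.
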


\begin{proof}

Suppose,  to the contrary, that
$|N(x)\cap V(H')|\geq 4$. 
Without loss of generality, assume that
$x$ has a neighbor $v_j$ $(1\leq j\leq s)$ on $P_1$.
Since $C'$ is an odd cycle of length $2k+1$, it follows that $C'$ contains at most 
two neighbors of $x$, one of which is $v_j$. Because $v_j$ lies on $P_1$, the path $Q$ contains at most one neighbor of $x$.
Note that $H$ contains at most three neighbors of $x$
because $H$ is a diagonal $\Phi_{4k,1}$.
Since $|N(x)\cap V(H')|\geq 4$ and $V(H')=V(H)\cup V(Q)$, 
it  follows that $Q$ contains exactly one neighbor, say $y_1$, with $dist_{C'}(v_j,y_1)=2$, and $P_3\cup P_4$ contains exactly two neighbors, say $y_2,y_3$, of $x$ in $H'$, respectively.
Thus, $\ell({y_1P_{v_1v_{2k+1}}v_{2k+t}})=2k-2-(t-1)-(j-1)-2=2k-2-t-j$. 
Since $H$ is a diagonal $\Phi_{4k,1}$, it follows that $\{v_j,y_2,y_3\}$ is a  diagonal triple of $C$. 
Clearly, $P_3$ cannot contain both $y_2,y_3$.
If both $y_2,y_3$ are on the path $P_4$,
then 
 $\{v_j,y_2,y_3\}=\{v_{j},v_{2k+j-1},v_{2k+j+1}\}$, where $2k+j-1>2k+t$ since 
$C'$ cannot contain three neighbors of $x$. But now 
the odd cycle $xy_1P_{v_1v_{2k+1}}v_{2k+t}v_{2k+t+1}\ldots v_{2k+j-1}x$ has length $(2k-2-t-j)+(j-1-t)+2=
2k-2t-1$, a contradiction.

So, $P_3$ contains exactly one of $y_2,y_3$, say $y_2$, and  $P_4$ contains $y_3$. This implies that $y_2=v_{j+2}$
and $y_3=v_{2k+j+1}$ with $j\geq t$.
But now $xy_1P_{v_1v_{2k+1}}v_{2k+t}\ldots v_{2k+j+1}x$ is an odd cycle of length
 $(2k-2-t-j)+(j+1-t)+2=2k-2t+1$,
 a contradiction.
This proves the claim.
\end{proof}

\begin{claim}\label{P4k2}
For any vertex $x$ in $G$, if $v$ has exactly two neighbors on $P_1\cup P_2$, then $|N(x)\cap V(H')|=2$.
\end{claim}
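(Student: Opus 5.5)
Let $x$ have exactly two neighbours on $P_1\cup P_2$. The plan is to show that these two neighbours already use up every neighbour that $x$ can have on $C'$ and on $H$, so no neighbour can lie on $Q$ or on $P_3\cup P_4$.

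First, the two neighbours on $C'$. Note that $V(P_1\cup P_2)\subseteq V(C')$. By the odd girth assumption, $x$ has at most two neighbours on the $(2k+1)$-cycle $C'$, and if it has two they are at distance two on $C'$. Hence the two neighbours $a,b$ on $P_1\cup P_2$ are all of $x$'s neighbours on $C'$, and $dist_{C'}(a,b)=2$. In particular $x$ has no neighbour on $Q$, since $V(Q)\subseteq V(C')$. As $V(H')=V(H)\cup V(Q)$, it remains to exclude a neighbour of $x$ on $P_3\cup P_4$.

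Next, suppose for contradiction that $x$ has a neighbour $y\in V(P_3\cup P_4)$. Then $\{a,b,y\}$ is a triple of neighbours of $x$ in $H$, so $x\in U$. Because $H$ is a diagonal $\Phi_{4k,1}$, this triple is either a diagonal triple $\{v_{i-1},v_{i+1},v_{2k+i}\}$ of $C$ or one of the special triples $\{v_{-2},v_2,v_0\}$ and $\{v_{2k-2},v_{2k+2},v_{2k}\}$. We then split according to where $a,b$ sit.

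\emph{Case 1: $a,b$ lie on the same path $P_1$ or $P_2$.} Say $a,b\in P_1$, so $a=v_{j-1}$ and $b=v_{j+1}$.
\begin{itemize}
\item If the triple is diagonal, then $y=v_{2k+j}$. We close an odd cycle through $x$ using $Q$ and the parts of $P_2,P_4$ on the way to $v_{2k+j}$. This is the same length count as in Claim~\ref{P4k3}, and it should give an odd cycle of length $2k-2t\pm1<2k+1$.
\item If the triple is special, then $\{a,b\}=\{v_{j-1},v_{j+1}\}$ with $j=0$ or $j=2k$. This is impossible when both lie on $P_1$: $P_1$ is $v_1\ldots v_s$, so $0$ and $2k$ are never interior indices there. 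The analogous statement holds for $P_2$.
\end{itemize}

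\emph{Case 2: $a\in P_1$ and $b\in P_2$.} Then $dist_{C'}(a,b)=2$ forces $a=v_1$ and $b=v_{2k+1}$, with the connecting vertex being $v_0$ or $v_{2k}$. In this situation $Q$ has length $0$, i.e.\ $P_{v_1v_{2k+1}}$ would be a short detour through the rim, which must be checked. A neighbourhood $\{v_1,v_{2k+1},y\}$ would have to equal a diagonal triple $\{v_{i-1},v_{i+1},v_{2k+i}\}$. No diagonal triple contains both $v_1$ and $v_{2k+1}$ except one whose third vertex makes $x$ adjacent to $v_0$ or $v_{2k}$ as well. That vertex is not on $P_3\cup P_4$, so again $|N(x)\cap V(H')|=2$.

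The main obstacle is the parity and length bookkeeping in Case 1 with $y$ on $P_4$ versus $P_3$, and the boundary indices. For these I would mimic the explicit cycle $xy_1P_{v_1v_{2k+1}}v_{2k+t}\ldots x$ from the proof of Claim~\ref{P4k3}, substituting $a$ or $b$ for $y_1$. Where the direct cycle is not short enough, I would instead use Lemma~\ref{P1P2P3P4property} on the Theta graph $\Theta_2$, as in Claim~\ref{P4k4}.
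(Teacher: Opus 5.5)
\textbf{There is a genuine gap in your Case 2.}

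Your opening reduction is sound and matches the paper. The two neighbours $a,b$ exhaust $N(x)\cap V(C')$, so any third neighbour $y$ lies on $P_3\cup P_4$. Every special triple contains $v_0$ or $v_{2k}$, while none of $a,b,y$ does, so $\{a,b,y\}$ must be a diagonal triple of $C$. That is the clean way to dismiss special triples, and it applies in Case 2 as well.

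Your Case 1 is the paper's first case, but the short cycle must pass through the endpoint nearer $Q$. With $a=v_{j-1}$, $b=v_{j+1}\in P_1$ and $y=v_{2k+j}$, we have $y\in P_4$ (so $j>t$), since $y\notin V(C')\supseteq V(P_2)$. The closed walk $xv_{j+1}P_{v_1v_{2k+1}}v_{2k+t}v_{2k+t+1}\ldots v_{2k+j}x$ then has length $2k-2t+1\le 2k-1$. The analogous walk through $v_{j-1}$ has length $2k-2t+3$, which is not short when $t=1$.

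In Case 2 you assert that $dist_{C'}(a,b)=2$ with $a\in P_1$, $b\in P_2$ forces $a=v_1$, $b=v_{2k+1}$ with middle vertex $v_0$ or $v_{2k}$. This is false. The arc of $C'$ from $P_1$ to $P_2$ through $v_0v_{2k}$ has length at least $3$ (already $v_1v_0v_{2k}v_{2k+1}$). So the distance two is realized along $P_{v_1v_{2k+1}}$ itself: the middle vertex is an inner vertex of that path, and $a,b$ sit near the junction with $Q$. Your argument therefore treats a configuration that cannot occur and leaves the real one open.

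The missing idea is a parity count. Because $a$ and $b$ lie in the opposite halves $\{v_1,\ldots,v_{2k-1}\}$ and $\{v_{2k+1},\ldots,v_{4k-1}\}$ of $C$, the pair of the diagonal triple at $C$-distance two must be $\{a,y\}$ when $y\in P_3$, or $\{b,y\}$ when $y\in P_4$. In the first case $a=v_j$, $y=v_{j+2}$, $b=v_{2k+j+1}$. Since $P_{v_1v_{2k+1}}$ begins with $v_1v_2\ldots v_s$ and ends with $v_{2k+t}\ldots v_{2k+1}$, we get
\[
\ell(P_{v_1v_{2k+1}})=\ell(v_1P_{v_1v_{2k+1}}a)+\ell(aP_{v_1v_{2k+1}}b)+\ell(bP_{v_1v_{2k+1}}v_{2k+1})=(j-1)+2+j=2j+1 .
\]
This is odd, contradicting $\ell(P_{v_1v_{2k+1}})=2k-2$. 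The case $y\in P_4$ symmetrically gives an odd length as well.
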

\begin{proof}
    Suppose, by contradiction, that $|N(x)\cap V(H')|\geq 3$. Let $y_1,y_2$ be the two  neighbors of $x$ in $P_1\cup P_2$, and let $y_3$ be the other  neighbor of $x$ distinct from $y_1$ and $y_2$. Note that $y_1,y_2$ lie in the odd cycle $C'$ of length $2k+1$. This implies that $y_3\notin V(C')$, i.e., $y_3$ lies in $P_3\cup P_4$. Since $H$ is a diagonal $\Phi_{4k,1}$, it follows that $\{y_1,y_2,y_3\}$ is a  diagonal triple of $C$.

    Suppose that one of $P_1,P_2$, say $P_1$, contains both $y_1$ and $y_2$. Without loss of generality, we may assume that $y_1=v_{j-2},y_2=v_j$ with
    $3\leq j\leq s$, then $y_3=v_{2k+j-1}$ because 
    $\{y_1,y_2,y_3\}$ is a  diagonal triple of $C$.
    Since $C'$ cannot contain three neighbors of $x$, it follows that $2k+j-1>2k+t$. Note that $\ell(v_{j}P_{v_1v_{2k+1}}v_{2k+t})=2k-2-(t-1)-(j-1)=2k-t-j$. However, 
    the odd cycle $xv_{j}P_{v_1v_{2k+1}}v_{2k+t}\ldots v_{2k+j-1}x$ has length $(
    2k-t-j)+(j-1-t)+2=2k-2t+1$, which is smaller than
$2k+1$, a contradiction.

Now suppose that each of $P_1$ and $P_2$  contains precisely  one of $y_1,y_2$.
 Since $y_1,y_2$ are two neighbors of $x$ in $C'$, it follows that $dist_{C'}(y_1,y_2)=2$, i.e., $y_1P_{v_1v_{2k+1}}y_2$ is a path of length two.
 Recall that $y_3$ is in $P_3\cup P_4$.
 We may assume that $y_3$ is on the path $P_3$ without loss of generality. Furthermore, we may assume that
 $y_1=v_{j},y_2=v_{2k+j+1},y_3=v_{j+2}$ with $v_j\in V(P_1)$ and $v_{2k+j+1}\in V(P_2)$ since $\{y_1,y_2,y_3\}$ is a  diagonal triple of $C$.
 This implies that $\ell(P_{v_1v_{2k+1}})=\ell(v_1P_{v_1v_{2k+1}}y_1)+\ell(y_1P_{v_1v_{2k+1}}y_2)+\ell(y_2P_{v_1v_{2k+1}}v_{2k+1})=(j-1)+2+(2k+j+1-(2k+1))=2j+1$, which contradicts the fact that $P_{v_1v_{2k+1}}$ has length $2k-2$. This proves the claim.
\end{proof}

For $i\in\{0,1,2\}$, let 
$V_i=\{x\in V(G):|N(v)\cap V(P_1\cup P_2)|=i\}$.
Since $P_1\cup P_2$ is in the odd cycle $C'$ of length $2k+1$,
it follows that every vertex in $G$ has at most two neighbors in $P_1\cup P_2$, which implies that $(V_0,V_1,V_2)$ is a partition of $G$. By the definition of $V_i$, we 
infer that $\sum_{x\in V(P_1\cup P_2)}d(x)=|V_1|+2|V_2|$.
Therefore, by Claims~\ref{P4k4},~\ref{P4k3}
and~\ref{P4k2} we have
\begin{align*}
\sum_{x\in V_0\cup V_1\cup V_2}|N(x)\cap V(H')|&\leq 4|V_0|+3|V_1|+2|V_2 | \\
&=4(n-|V_1|-|V_2|)+3|V_1|+2|V_2|\\
&=4n-\sum_{x\in V(P_1\cup P_2)}d(x)\\
&<4n-(s+t)\frac{4n}{6k-1}.
\end{align*}
On the other hand, we have

\begin{align*}
\sum_{x\in V_0\cup V_1\cup V_2}|N(x)\cap V(H')| =\sum_{x\in V(H')}d(x) >(6k-1-s-t)\frac{4n}{6k-1} =4n-(s+t)\frac{4n}{6k-1}, 
\end{align*}
 a final contradiction. This completes the proof of Lemma~\ref{phi4k1isM4k}.
 \end{proof}

\begin{proof}[Proof of Lemma~\ref{phi4k3ism4k}]
If we show that $C$ is diagonal, then 
the graph $C\cup \{v_1v_{2k+1}\}$ 
is a diagonal $\Phi_{4k,1}$. Hence,  
by Lemma~\ref{phi4k1isM4k}, we obtain
 $v_2v_{2k+2}\in E(G)$. 
Repeating the process recursively and applying Lemma~\ref{phi4k1isM4k},
we obtain  that
 $C\cup \{v_iv_{2k+i}\}$ 
is a diagonal $\Phi_{4k,1}$, and consequently 
$v_iv_{2k+i}\in E(G)$  for each $i=2,3,\ldots,2k-2$.  Hence we obtain our desired $M_{4k}$ and complete the proof. 

Suppose, to the contrary, that there exists a vertex $u$ in $G$ which has three neighbors $w_1, w_2, w_3$ in $C$ such that $\{w_1, w_2, w_3\}$ is not a diagonal triple of $C$.

\begin{claim}\label{phi4k3NeiDia}
Let $x$ be any vertex in $G$ such that $|N(x)\cap V(C)|\geq 3$. Then for any three distinct vertices $y_1,y_2,y_3$ in $ N(x)\cap V(C)$, the set $\{y_1,y_2,y_3\}$ is either a diagonal triple of $C$ or  one of the two triples $\{v_0,v_2,v_{4k-2}\}$ and $\{v_{2k-2},v_{2k},v_{2k+2}\}$. Consequently, every vertex in $G$ has at most three neighbors in $C$.
\end{claim}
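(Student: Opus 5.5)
The plan is to work entirely inside $H$ and use only two facts: the odd girth bound, and the fact that $H$ contains six odd cycles of length $2k+1$.

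\textbf{Step 1: list the short odd cycles.} Each diagonal chord $v_jv_{2k+j}$ with $j\in\{4k-1,0,1\}$ closes two odd cycles of length $2k+1$ with the two arcs of $C$ between its ends. This gives six arcs of $C$ spanning $2k+1$ consecutive vertices:
\[
A_j=\{v_j,v_{j+1},\ldots,v_{j+2k}\},\qquad j\in\{4k-1,0,1,2k-1,2k,2k+1\}.
\]
Each $A_j$ induces, together with the appropriate chord, a $(2k+1)$-cycle. Because $G$ has odd girth at least $2k+1$, a vertex $x$ has at most two neighbors on a $(2k+1)$-cycle, and if it has two, they are at distance two on that cycle. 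So $|N(x)\cap A_j|\le 2$ for each of the six arcs, and two neighbors inside one $A_j$ are at distance exactly $2$ along it.

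\textbf{Step 2: parity and distance constraints for a pair.} Let $y,z\in N(x)\cap V(C)$. Any $(y,z)$-path in $H$ of odd length $L$ closes with $x$ into an odd closed walk of length $L+2$. Hence every odd $(y,z)$-path in $H$ has length at least $2k-1$.

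Apply this first to the two arcs of $C$. If $y,z$ are at odd distance $d\le 2k$ along $C$, then $d\in\{2k-1\}$; note that $d=2k+1$ would mean the complementary distance is $2k-1$. If $y,z$ are at even distance along $C$, apply it to paths that use one of the three chords: going from $y$ to a chord end, across the chord, and on to $z$ gives an odd path. The resulting length condition forces $y,z$ to sit in a very restricted position relative to $v_{4k-1},v_0,v_1$ and their antipodes, unless $y$ and $z$ are at distance $2$. I would tabulate this for each pair.

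\textbf{Step 3: case analysis on a triple.} Take three neighbors $y_1,y_2,y_3$ of $x$ on $C$ and split according to the parities of their pairwise $C$-distances. Since $C$ is even, either all three pairwise distances are even, or exactly two are odd.

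\emph{Two odd distances.} Step 2 makes both odd distances equal to $2k-1$, and the remaining even distance is then $2$. This is exactly a diagonal triple $\{v_{i-1},v_{i+1},v_{2k+i}\}$.

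\emph{All three distances even.} The three points lie on one parity class of $C$. Using the arcs $A_j$ (at most two neighbors each, at distance $2$), I would show that the three points must avoid being contained in any single $A_j$ while consecutive ones are spaced by $2$. I will check that the only configurations escaping all six arcs are $\{v_{4k-2},v_0,v_2\}$ and its antipode $\{v_{2k-2},v_{2k},v_{2k+2}\}$. Each of these straddles a chord end: one arc of the triple misses $A_{2k}$ and $A_{2k+1}$ on the left, and misses $A_{4k-1}$, $A_0$, $A_1$ on the right.

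\textbf{Step 4: the consequence.} If $x$ had four neighbors on $C$, every $3$-subset would have to be one of the allowed triples. A short check shows that no $4$-set has all its $3$-subsets of these forms. For example, two diagonal triples sharing two points would force a fourth point at odd distance other than $2k-1$. Hence $|N(x)\cap V(C)|\le 3$.

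\textbf{Main obstacle.} I expect the main obstacle to be the all-even case in Step 3. The bookkeeping of which triples escape all six arcs, combined with the chord-path parity bounds from Step 2, is where the two exceptional triples appear. I would do it by fixing the middle point $v_i$ of the triple (spacing $2$ on each side, or spacing $2$ and $4$) and checking membership of the triple in each $A_j$ as $i$ varies over the $4k$ positions.
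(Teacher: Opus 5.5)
Your parity split is sound. The mixed-parity case is correct, and simpler than the paper's route: an odd $C$-distance between two neighbors of $x$ must be exactly $2k-1$, which forces a diagonal triple. Step~4 is routine. The gap is the all-even case, which carries the whole content of the claim, and the route you describe for it does not work.

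The six arc conditions neither force spacing $2$ nor isolate the two exceptional triples. For $k\ge 3$, the triple $\{v_{2k-4},v_{2k-2},v_{2k+2}\}$ meets each of $A_{4k-1},A_0,A_1$ exactly in the pair $v_{2k-4},v_{2k-2}$, which are at distance $2$. It meets each of $A_{2k-1},A_{2k},A_{2k+1}$ only in $v_{2k+2}$. So it passes every arc test, yet it is neither diagonal nor exceptional. More generally, every same-parity triple $\{v_a,v_{a+2},v_b\}$ with $a+2\le 2k-2$, $b\ge 2k+2$ and $b-a\le 2k$ passes all six arc tests; for example $\{v_2,v_4,v_{2k+2}\}$. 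Hence your planned enumeration (``spacing $2$ on each side, or spacing $2$ and $4$'') is also incomplete. These triples are excluded only by odd paths through the chords $v_1v_{2k+1}$ or $v_{2k-1}v_{4k-1}$. In the example, $v_{2k-4}v_{2k-5}\ldots v_1v_{2k+1}v_{2k+2}$ has length $2k-3$. That is exactly the analysis your Step~2 leaves as an untabulated ``very restricted position''.

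The missing ingredient is a single pair constraint. Let $v_p,v_q$ be neighbors of $x$ of the same parity with $1\le p\le 2k-1$ and $2k+1\le q\le 4k-1$. Then the paths $v_pv_{p-1}\ldots v_1v_{2k+1}\ldots v_q$ and $v_pv_{p+1}\ldots v_{2k-1}v_{4k-1}v_{4k-2}\ldots v_q$ are odd, of lengths $p+q-2k-1$ and $6k-1-p-q$. Requiring both to be at least $2k-1$ gives $p+q=4k$.

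With this, the all-even case closes in a few lines:
\begin{itemize}
\item $x$ is not adjacent to both $v_0$ and $v_{2k}$, since that would give a triangle.
\item No three neighbors lie in $\{v_0,\ldots,v_{2k}\}$ or in $\{v_{2k},\ldots,v_{4k-1},v_0\}$, since each of these spans a $(2k+1)$-cycle with the chord $v_0v_{2k}$.
\item If neither $v_0$ nor $v_{2k}$ is a neighbor, then two neighbors $v_p,v_{p'}$ lie on one side and one neighbor $v_q$ on the other. The constraint gives $p=4k-q=p'$, a contradiction.
\item If $v_0$ is a neighbor, the other two lie on opposite sides, so they are $v_p$ and $v_{4k-p}$. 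The cycle $v_0v_1\ldots v_{2k}v_0$ then forces $p=2$, giving $\{v_{4k-2},v_0,v_2\}$.
\item Symmetrically, if $v_{2k}$ is a neighbor you get $\{v_{2k-2},v_{2k},v_{2k+2}\}$.
\end{itemize}
This constraint is, in effect, what the paper extracts from Lemma~\ref{P1P2P3P4property} applied to the Theta graph through $x$, combined with the same two chords $v_1v_{2k+1}$ and $v_{2k-1}v_{4k-1}$.
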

\begin{proof}
    Suppose, to the contrary, that $\{y_1,y_2,y_3\}$ is neither equal to $\{v_0,v_2,v_{4k-2}\}$ nor to $\{v_{2k-2},v_{2k},v_{2k+2}\}$, and  is not a diagonal triple of $C$. 
    Let $C':=v_0v_1\ldots v_{2k}v_0$ and $C'':=v_0v_{2k}v_{2k+1}\ldots v_{4k-1}v_0$ be two odd cycles of length $2k+1$ in $H$. Since $G$ has odd girth at least $2k+1$, one of $C',C''$, say $C'$, contains exactly two of $y_1,y_2,y_3$, say $y_1,y_2$. Clearly, $dist_{C'}(y_1,y_2) = 2$ and $y_3$ lies in $C''$.
    If $\{y_1,y_2\} \cap \{v_0,v_{2k}\}  \neq \emptyset$, then we may assume that $y_1 = v_0$ without loss of generality. So, we have $y_2 \in \{v_2,v_{2k-1}\}$ and $y_3\in \{v_{2k+1},v_{4k-2}\}$. However, if $y_2=v_2$ and $y_3=v_{4k-2}$ then $\{y_1,y_2,y_3\}=\{v_0,v_2,v_{4k-2}\}$; otherwise we can conclude that 
$\{y_1,y_2,y_3\}$ is  a diagonal triple of $C$.  Both cases lead to a contradiction.

So, $\{y_1,y_2\} \cap \{v_0,v_{2k}\}=\emptyset$, which implies that
$y_1,y_2\in \{v_1,\ldots,v_{2k-1}\}$ and $y_3\in \{v_{2k+1},\ldots,v_{4k-1}\}$. Without loss of generality, we may assume that $y_1=v_{j-1},y_2=v_{j+1}$ for some $j\in\{2,\ldots,2k-2\}$, and $y_3=v_{2k+r}$ for some $r\in\{1,\ldots,2k-1\}$. 
Clearly, $r\neq j$ because  $\{y_1,y_2,y_3\}$ is not a diagonal triple of $C$.
Consider the paths, as shown in Figure~\ref{graphOfPhi4k3-M4k111},

$$P_1=xv_{j-1}\ldots v_0v_{4k-1}\ldots v_{2k+r},P_2=xv_{j+1}\ldots v_{2k+r},$$$$P_3=xv_{j-1}\ldots v_0v_{2k}\ldots v_{2k+r},P_4=xv_{j+1}\ldots v_{2k}v_0v_{4k-1}\ldots v_{2k+r}.$$

\begin{figure}[h]
\centering
\begin{subfigure}{0.45\textwidth} 
\centering
\begin{tikzpicture}[scale=0.8] 
\coordinate (v_{4k-1}) at (0,0);
\coordinate (v_{2k+1}) at (5,0);
\coordinate (v_{2k}) at (6,1.732);  
\coordinate (v_{2k-1}) at (5,3.464);  
\coordinate (v_1) at (0,3.464);
\coordinate (v_0) at (-1,1.732);
\coordinate (v_2) at (0.67,3.464);  
\coordinate (v_{2k-2}) at (4.33,3.464);  
\coordinate (v_s) at (2,3.464); 
\coordinate (v_{4k-2}) at (0.67,0);  
\coordinate (v_{2k+2}) at (4.33,0);  
\coordinate (v_{2k+t}) at (3,0); 
\coordinate (v_{j-1}) at (2.0,3.464); 
\coordinate (v_j) at (2.5,3.464); 
\coordinate (v_{j+1}) at (3.0,3.464); 
\coordinate (v_{2k+j}) at (2.5,0); 

\draw[thick] (v_{4k-1}) -- (v_{2k+1}) -- (v_{2k}) -- (v_{2k-1}) -- (v_1) -- (v_0) -- cycle;
\draw[thick] (v_{2k}) -- (v_0);
\draw[thick] (v_{2k+1}) -- (v_1);
\draw[thick] (v_{2k-1}) -- (v_{4k-1});


\coordinate (x) at (2.5,2.45); 

\draw[thick] (x) -- (v_{j-1});
\draw[thick] (x) -- (v_{j+1});

\coordinate (y_12) at (2.3,2.598);
\coordinate (v_s2) at (2.05,3.464); 

\coordinate (y_22) at (2.5,1.9052);
\coordinate (v_{2k+t}2) at (3.05,0); 

\draw[cyan,line width=1.5pt] (v_{2k+j}) -- (v_{2k+1}) -- (v_{2k}) -- (v_{2k-1}) -- (v_{j+1}) -- (x);

\draw[red,line width=1.5pt]  (x) -- (v_{j-1}) -- (v_1) -- (v_0) -- (v_{4k-1}) -- (v_{2k+j});

\draw[thick] (x) -- (v_{2k+j});
\node[circle, fill=black, inner sep=1.5pt] at (v_{j-1}) {};
\node[above, xshift=-2pt] at (v_{j-1}) {$v_{j-1}$};

\node[circle, fill=black, inner sep=1.5pt] at (v_{j+1}) {};
\node[above, xshift=2pt] at (v_{j+1}) {$v_{j+1}$};

\node[circle, fill=black, inner sep=1.5pt] at (v_{4k-1}) {};
\node[below left] at (v_{4k-1}) {$v_{4k-1}$};

\node[circle, fill=black, inner sep=1.5pt] at (v_{2k+1}) {};
\node[below right] at (v_{2k+1}) {$v_{2k+1}$};

\node[circle, fill=black, inner sep=1.5pt] at (v_{2k}) {};
\node[right, xshift=5pt, yshift=2pt] at (v_{2k}) {$v_{2k}$};

\node[circle, fill=black, inner sep=1.5pt] at (v_{2k-1}) {};
\node[above right, xshift=-5pt, yshift=2pt] at (v_{2k-1}) {$v_{2k-1}$};

\node[circle, fill=black, inner sep=1.5pt] at (v_1) {};
\node[above left, xshift=2pt] at (v_1) {$v_1$};

\node[circle, fill=black, inner sep=1.5pt] at (v_0) {};
\node[left, xshift=-2pt] at (v_0) {$v_0$};

\node[circle, fill=black, inner sep=1.5pt] at (v_j) {};

\node[circle, draw=black, fill=black, inner sep=2pt, minimum size=4pt] at (x) {};
\node[right, xshift=2pt] at (x) {$x$};

\node[circle, draw=black, fill=black, inner sep=2pt, minimum size=4pt] at (v_{2k+j}) {};
\node[below,yshift=-2pt] at (v_{2k+j}) {$v_{2k+r}$};

\node[red,thick] at (0.5,2.3) {\Large$P_1$};
\node[cyan,thick] at (4.7,1.2) {\Large$P_2$};

\end{tikzpicture}

\end{subfigure}
\hfill
\begin{subfigure}{0.45\textwidth} 
\centering
\begin{tikzpicture}[scale=0.8] 
\coordinate (v_{4k-1}) at (0,0);
\coordinate (v_{2k+1}) at (5,0);
\coordinate (v_{2k}) at (6,1.732);  
\coordinate (v_{2k-1}) at (5,3.464);  
\coordinate (v_1) at (0,3.464);
\coordinate (v_0) at (-1,1.732);
\coordinate (v_2) at (0.67,3.464);  
\coordinate (v_{2k-2}) at (4.33,3.464);  
\coordinate (v_s) at (2,3.464); 
\coordinate (v_{4k-2}) at (0.67,0);  
\coordinate (v_{2k+2}) at (4.33,0);  
\coordinate (v_{2k+t}) at (3,0); 
\coordinate (v_{j-1}) at (2.0,3.464); 
\coordinate (v_j) at (2.5,3.464); 
\coordinate (v_{j+1}) at (3.0,3.464); 
\coordinate (v_{2k+j}) at (2.5,0); 

\draw[thick] (v_{4k-1}) -- (v_{2k+1}) -- (v_{2k}) -- (v_{2k-1}) -- (v_1) -- (v_0) -- cycle;
\draw[thick] (v_{2k}) -- (v_0);
\draw[thick] (v_{2k+1}) -- (v_1);
\draw[thick] (v_{2k-1}) -- (v_{4k-1});


\coordinate (x) at (2.5,2.45); 

\draw[thick] (x) -- (v_{j-1});
\draw[thick] (x) -- (v_{j+1});

\coordinate (y_12) at (2.3,2.598);
\coordinate (v_s2) at (2.05,3.464); 

\coordinate (y_22) at (2.5,1.9052);
\coordinate (v_{2k+t}2) at (3.05,0);

\draw[brown,line width=1.5pt] (v_{2k+j}) -- (v_{2k+1}) -- (5.9713,1.682) -- (-1.0289,1.682) -- (v_1)  -- (v_{j-1}) -- (x);

\draw[green,line width=1.5pt] (v_{2k+j}) -- (v_{4k-1}) -- (-1.0289,1.782) -- (5.9713,1.782) -- (v_{2k-1}) -- (v_{j+1}) -- (x);

\draw[thick] (x) -- (v_{2k+j});
\node[circle, fill=black, inner sep=1.5pt] at (v_{j-1}) {};
\node[above, xshift=-2pt] at (v_{j-1}) {$v_{j-1}$};

\node[circle, fill=black, inner sep=1.5pt] at (v_{j+1}) {};
\node[above, xshift=2pt] at (v_{j+1}) {$v_{j+1}$};

\node[circle, fill=black, inner sep=1.5pt] at (v_{4k-1}) {};
\node[below left] at (v_{4k-1}) {$v_{4k-1}$};

\node[circle, fill=black, inner sep=1.5pt] at (v_{2k+1}) {};
\node[below right] at (v_{2k+1}) {$v_{2k+1}$};

\node[circle, fill=black, inner sep=1.5pt] at (v_{2k}) {};
\node[right, xshift=5pt, yshift=2pt] at (v_{2k}) {$v_{2k}$};

\node[circle, fill=black, inner sep=1.5pt] at (v_{2k-1}) {};
\node[above right, xshift=-5pt, yshift=2pt] at (v_{2k-1}) {$v_{2k-1}$};

\node[circle, fill=black, inner sep=1.5pt] at (v_1) {};
\node[above left, xshift=2pt] at (v_1) {$v_1$};

\node[circle, fill=black, inner sep=1.5pt] at (v_0) {};
\node[left, xshift=-2pt] at (v_0) {$v_0$};

\node[circle, fill=black, inner sep=1.5pt] at (v_j) {};

\node[circle, draw=black, fill=black, inner sep=2pt, minimum size=4pt] at (x) {};
\node[right, xshift=2pt] at (x) {$x$};

\node[circle, draw=black, fill=black, inner sep=2pt, minimum size=4pt] at (v_{2k+j}) {};
\node[below,yshift=-2pt] at (v_{2k+j}) {$v_{2k+r}$};

\node[green,thick] at (1.8,0.7) {\Large$P_4$};
\node[brown,thick] at (0.5,2.5) {\Large$P_3$};

\end{tikzpicture}

\end{subfigure}
\caption{The proof of Claim~\ref{phi4k3NeiDia}}
\label{graphOfPhi4k3-M4k111}
\end{figure}

Note that the three paths $v_0v_1\ldots v_{j-1}xv_{j+1}v_{j+2}\ldots v_{2k},$ $       v_0v_{2k}$ and $v_0v_{4k-1}v_{4k-2}\ldots v_{2k}$ form a  $(2k+1)$-Theta graph $\Theta$.
 Clearly, $P_1,P_2$ are the two $(x,v_{2k+r})$-outer paths, and
$P_3,P_4$ are the two $(x,v_{2k+r})$-intersecting paths in $\Theta$. By Lemma~\ref{P1P2P3P4property}, we have that either the cycles $xP_1v_{2k+r}x$ and $xP_2v_{2k+r}x$ both have length $2k+1$, or one of the cycles $xP_3v_{2k+r}x,xP_4v_{2k+r}x$ has length $2k+1$.  If the first case holds, then $r=j$, which contradicts the fact that $\{y_1,y_2,y_3\}$ is not a diagonal triple of $C$.
If the latter case holds, then we may, without loss of generality, assume that $xP_3v_{2k+r}x$ has length $2k+1$, then $v_1v_{2k+1}$ is a chord in $xP_3v_{2k+r}x$, which yields an odd cycle of length at most $2k-1$, a contradiction.
This proves the claim.
\end{proof}

It follows from Claim~\ref{phi4k3NeiDia} that every vertex in $G$ has at most three neighbors in $C$. 
Since $\{w_1,w_2,w_3\}$ is not a diagonal triple of $C$, we conclude that $\{w_1,w_2,w_3\}$ is equal to $\{v_0,v_2,v_{4k-2}\}$ or $\{v_{2k-2},v_{2k},v_{2k+2}\}$ by Claim~\ref{phi4k3NeiDia}.
 Without loss of generality, we may assume that $\{w_1,w_2,w_3\}=\{v_0,v_2,v_{4k-2}\}$.
 Let $H':=H\cup \{uv_{2},uv_{0},uv_{4k-2}\}$ be a subgraph of $G$, and let $P:=v_{2}v_3\ldots v_{2k-1}$ be the path shown in Figure~\ref{graphOfPhi4k3-M4k222}a.
Moreover, let $D_1:=v_1v_{2k+1}v_{2k+2}\ldots v_{4k-2}uv_2v_1, D_2:=v_0uv_2v_3\ldots v_{2k-1}v_{4k-1}v_0$ be two odd cycles in $G$, as shown in Figure~\ref{graphOfPhi4k3-M4k222}b. Clearly, both $D_1$ and $D_2$ have length $2k+1$, and $P$ has length $2k-3$.

\begin{figure}[h]
\centering

\begin{subfigure}{0.42\textwidth} 
\centering
\begin{tikzpicture}[scale=0.8] 
\coordinate (v_{4k-1}) at (0,0);
\coordinate (v_{2k+1}) at (3.5,0);
\coordinate (v_{2k}) at (4.5,1.732);  
\coordinate (v_{2k-1}) at (3.5,3.464);  
\coordinate (v_1) at (0,3.464);
\coordinate (v_0) at (-1,1.732);
\coordinate (v_2) at (1,3.464);  
\coordinate (v_{2k-2}) at (4.33,3.464);  
\coordinate (v_s) at (2,3.464); 
\coordinate (v_{4k-2}) at (1,0);  
\coordinate (v_{2k+2}) at (4.25,0);  
\coordinate (v_{2k+t}) at (3,0); 
\coordinate (v_{s+1}) at (2.6,3.464); 
\coordinate (v_j) at (3.4,3.464); 
\coordinate (v_{j+1}) at (3.9,3.464); 
\coordinate (v_{2k+t+1}) at (2.4,0); 

\coordinate (v) at (-2.5,1.732);
\coordinate (v_3) at (1.5,3.464);
\coordinate (v_4) at (2.25,3.464);
\coordinate (v_{2k+3}) at (3.5,0);

\draw[thick] (v_{2k+t}) -- (v_{2k+1});
\draw[thick] (v_{4k-1}) -- (v_{2k+1}) -- (v_{2k}) -- (v_{2k-1}) -- (v_1) -- (v_0) -- cycle;
\draw[thick] (v_{2k}) -- (v_0);

\node[circle, fill=black, inner sep=1.5pt] at (v_{4k-1}) {};
\node[left] at (v_{4k-1}) {$v_{4k-1}$};

\node[circle, fill=black, inner sep=1.5pt] at (v_{2k}) {};
\node[right, xshift=5pt, yshift=2pt] at (v_{2k}) {$v_{2k}$};

\node[circle, fill=black, inner sep=1.5pt] at (v_1) {};
\node[left, xshift=2pt] at (v_1) {$v_1$};

\node[circle, fill=black, inner sep=1.5pt] at (v_0) {};
\node[above left, xshift=-2pt] at (v_0) {$v_0$};

\draw[thick] (v_1) -- (v_{2k+1});
\draw[thick] (v_{4k-1}) -- (v_{2k-1});
\draw[thick] (v_1) -- (v_{2k-1});
\draw[thick] (v) -- (v_0);
\draw[thick] (v_{2k+1}) -- (v_{4k-1});
\draw[red,line width=1pt] (v_2) -- (v_{2k-1});
\draw[black,thick](v)arc(180:51:2.18);
\draw[black,thick](v)arc(-180:-51:2.18);

\node[circle, fill=red, inner sep=1.5pt] at (v_2) {};
\node[above] at (v_2) {$v_2$};

\node[circle, fill=red, inner sep=1.5pt] at (v_{2k-1}) {};
\node[above] at (v_{2k-1}) {$v_{2k-1}$};

\node[circle, fill=black, inner sep=1.5pt] at (v) {};
\node[left] at (v) {$u$};

\node[circle, fill=black, inner sep=1.5pt] at (v_{4k-2}) {};
\node[below,xshift=10pt] at (v_{4k-2}) {$v_{4k-2}$};

\node[circle, fill=black, inner sep=1.5pt] at (v_{2k+1}) {};
\node[below] at (v_{2k+1}) {$v_{2k+1}$};

\node[red,thick] at (1.8,2.7) {\Large$P$};

\node[] at (1.75,-1.5) {(a)};
\end{tikzpicture}

\end{subfigure}
\hspace{0.12\textwidth} 
\begin{subfigure}{0.42\textwidth} 
\centering
\begin{tikzpicture}[scale=0.8] 
\coordinate (v_{4k-1}) at (0,0);
\coordinate (v_{2k+1}) at (3.5,0);
\coordinate (v_{2k}) at (4.5,1.732);  
\coordinate (v_{2k-1}) at (3.5,3.464);  
\coordinate (v_1) at (0,3.464);
\coordinate (v_0) at (-1,1.732);
\coordinate (v_2) at (1,3.464);  
\coordinate (v_{2k-2}) at (4.33,3.464);  
\coordinate (v_s) at (2,3.464); 
\coordinate (v_{4k-2}) at (1,0);  
\coordinate (v_{2k+2}) at (4.25,0);  
\coordinate (v_{2k+t}) at (3,0); 
\coordinate (v_{s+1}) at (2.6,3.464); 
\coordinate (v_j) at (3.4,3.464); 
\coordinate (v_{j+1}) at (3.9,3.464); 
\coordinate (v_{2k+t+1}) at (2.4,0); 

\coordinate (v) at (-2.5,1.732);
\coordinate (v_3) at (1.5,3.464);
\coordinate (v_4) at (2.25,3.464);
\coordinate (v_{2k+3}) at (3.5,0);

\draw[thick] (v_{2k+t}) -- (v_{2k+1});
\draw[thick] (v_{4k-1}) -- (v_{2k+1}) -- (v_{2k}) -- (v_{2k-1}) -- (v_1) -- (v_0) -- cycle;
\draw[thick] (v_{2k}) -- (v_0);

\draw[thick] (v_1) -- (v_{2k+1});
\draw[thick] (v_{4k-1}) -- (v_{2k-1});
\draw[thick] (v_1) -- (v_{2k-1});
\draw[cyan, line width=1.5pt] (v) -- (v_0);
\draw[thick] (v_{2k+1}) -- (v_{4k-1});
\draw[cyan,line width=1.2pt](-2.5,1.782)arc(180:51:2.1806);
\draw[brown,line width=1.2pt](-2.45,1.732)arc(181:51:2.13);
\draw[brown,line width=1.5pt](v)arc(-180:-51:2.18);

\draw[brown,line width=1.5pt] (v_2) -- (v_1) -- (v_{2k+1}) -- (v_{4k-2});

\draw[cyan,line width=1.5pt] (v_2) -- (v_{2k-1}) -- (v_{4k-1}) -- (v_0);

\node[circle, fill=black, inner sep=1.5pt] at (v_{4k-1}) {};
\node[left] at (v_{4k-1}) {$v_{4k-1}$};

\node[circle, fill=black, inner sep=1.5pt] at (v_{2k}) {};
\node[right, xshift=5pt, yshift=2pt] at (v_{2k}) {$v_{2k}$};

\node[circle, fill=black, inner sep=1.5pt] at (v_{2k-1}) {};
\node[above right, xshift=-5pt, yshift=2pt] at (v_{2k-1}) {$v_{2k-1}$};

\node[circle, fill=black, inner sep=1.5pt] at (v_1) {};
\node[left, xshift=2pt] at (v_1) {$v_1$};

\node[circle, fill=black, inner sep=1.5pt] at (v_0) {};
\node[above left, xshift=-2pt] at (v_0) {$v_0$};

\node[circle, fill=black, inner sep=1.5pt] at (v_2) {};
\node[above] at (v_2) {$v_2$};

\node[circle, fill=black, inner sep=1.5pt] at (v) {};
\node[left] at (v) {$u$};

\node[circle, fill=black, inner sep=1.5pt] at (v_{4k-2}) {};
\node[below,xshift=10pt] at (v_{4k-2}) {$v_{4k-2}$};

\node[circle, fill=black, inner sep=1.5pt] at (v_{2k+1}) {};
\node[below] at (v_{2k+1}) {$v_{2k+1}$};

\node[] at (1.75,-1.5) {(b)};

\node[brown,thick] at (1.8,0.7) {\Large$D_1$};
\node[cyan,thick] at (1.8,2.9) {\Large$D_2$};

\end{tikzpicture}

\end{subfigure}
\caption{$H'=H\cup \{uv_{2},uv_{0},uv_{4k-2}\}$}

\label{graphOfPhi4k3-M4k222}
\end{figure}

\begin{claim}\label{phi4k3TriLe3}
    For any vertex $x$ in $G$, if $x$ has at least one neighbor on $P$, then $|N(x)\cap V(H')|\leq3$.
\end{claim}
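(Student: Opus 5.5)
The plan is to combine Claim~\ref{phi4k3NeiDia} with the two odd cycles $D_1,D_2$ of length $2k+1$. Note that $V(H')=V(C)\cup\{u\}$. Let $x$ be a vertex with a neighbor on $P=v_2\ldots v_{2k-1}$.

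\emph{The case $x=u$.} The neighbors of $u$ in $H'$ are exactly $v_0,v_2,v_{4k-2}$, because $u$ has at most three neighbors in $C$ by Claim~\ref{phi4k3NeiDia}. So the claim holds for $u$.

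\emph{Reduction for $x\neq u$.} By Claim~\ref{phi4k3NeiDia}, $x$ has at most three neighbors in $C$. Hence $|N(x)\cap V(H')|\geq 4$ forces two things: $xu\in E(G)$, and $x$ has exactly three neighbors $y_1,y_2,y_3$ in $C$. I will assume this and derive a contradiction.

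\emph{Using $D_2$ to pin down the $P$-neighbor.} Since $G$ has odd girth at least $2k+1$, $x$ has at most two neighbors on any $(2k+1)$-cycle, and two such neighbors lie at distance two on it. The vertex $u$ lies on $D_2=v_0uv_2v_3\ldots v_{2k-1}v_{4k-1}v_0$, and $u$ is a neighbor of $x$. So $x$ has at most one further neighbor on $D_2$, and it must be at distance two from $u$ on $D_2$, i.e. it is $v_3$ or $v_{4k-1}$. Since $V(P)\subseteq V(D_2)$ and $x$ has a neighbor on $P$, that neighbor is $v_3$. It follows that $x$ is adjacent to no vertex of $\{v_0,v_2,v_4,\ldots,v_{2k-1},v_{4k-1}\}$.

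\emph{Using $D_1$ for the rest of $C$.} Apply the same reasoning to $D_1=v_1v_{2k+1}\ldots v_{4k-2}uv_2v_1$, which also contains $u$. Among the vertices of $D_1$, $x$ has at most one neighbor besides $u$, namely $v_1$ or $v_{4k-3}$ (the vertices at distance two from $u$ on $D_1$). The only vertex of $C$ lying on neither $D_1$ nor $D_2$ is $v_{2k}$. Therefore $\{y_1,y_2,y_3\}$ is $\{v_3,v_1,v_{2k}\}$ or $\{v_3,v_{4k-3},v_{2k}\}$.

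\emph{Contradiction.} Neither set is a special triple, since neither contains $v_0$ nor equals $\{v_{2k-2},v_{2k},v_{2k+2}\}$. Neither is a diagonal triple $\{v_{i-1},v_{i+1},v_{2k+i}\}$:
\begin{itemize}
\item The only diagonal triple containing both $v_1,v_3$ is $\{v_1,v_3,v_{2k+2}\}$.
\item The diagonal triples containing $v_3$ and $v_{2k}$ are $\{v_3,v_{2k-3},v_{2k}\}$ (from $i=2k-3$), $\{v_{2k-3},v_{2k-1},v_{4k-2}\}$ and $\{v_{2k-1},v_{2k+1},v_{4k}\}$ — hmm, more simply: a diagonal triple containing $v_{2k}$ has the form $\{v_{2k-1},v_{2k+1},v_{4k}=v_0\}$ or $\{v_{2k},v_{i-1},v_{i+1}\}$ with $i=0$, namely $\{v_{4k-1},v_1,v_{2k}\}$. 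None of these contains $v_3$ together with $v_1$ or $v_{4k-3}$ (for $k\geq 2$).
\end{itemize}
This contradicts Claim~\ref{phi4k3NeiDia}, and the claim follows.

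The step needing most care is the bookkeeping: checking which vertices of $C$ lie on $D_1$ and $D_2$, and the small case $k=2$. There $P=v_2v_3$ and $v_{4k-3}=v_5$, so one should confirm that the distance-two vertices are still as stated and do not coincide with $v_{2k}$.
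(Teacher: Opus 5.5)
Your proof is correct in substance and is essentially the paper's argument. Both use that $u$ lies on the two $(2k+1)$-cycles $D_1,D_2$, so any further neighbor of $x$ on them is at distance two from $u$, and $v_{2k}$ is the only vertex of $C$ outside $D_1\cup D_2$; both then invoke Claim~\ref{phi4k3NeiDia}. The paper collects the five candidates $\{v_1,v_3,v_{2k},v_{4k-3},v_{4k-1}\}$ and notes that the only diagonal triple among them, $\{v_1,v_{4k-1},v_{2k}\}$, misses $P$. You instead first force the $P$-neighbor to be $v_3$. You also handle $x=u$ explicitly, which the paper leaves implicit.

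The second bullet of your final step is wrong as written and needs repair. The triple $\{v_{2k-1},v_{2k+1},v_0\}$ does not contain $v_{2k}$, and the expression ``$\{v_3,v_{2k-3},v_{2k}\}$ (from $i=2k-3$)'' is not a diagonal triple at all. The diagonal triples through $v_{2k}$ are the following, none of which contains $v_3$, so your conclusion survives:
\begin{itemize}
\item $\{v_{4k-1},v_1,v_{2k}\}$ (take $i=0$);
\item $\{v_{2k},v_{2k+2},v_1\}$ (take $i=2k+1$);
\item $\{v_{2k-2},v_{2k},v_{4k-1}\}$ (take $i=2k-1$).
\end{itemize}
A cleaner check disposes of both candidate triples at once, since each contains $v_3$ and $v_{2k}$. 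Inside a diagonal triple the index differences are $\pm2$ and $\pm(2k-1)$ modulo $4k$. The indices of $v_3$ and $v_{2k}$ differ by $2k-3$, which is none of these for $k\geq 2$.
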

\begin{proof}
    Suppose not, and let $|N(x)\cap V(H')|\geq4$. Recall that every vertex in $G$ has at most three neighbors in $C$.
    Hence $x$ has exactly three neighbors,
     say $y_1,y_2,y_3$,    
    in $C$, and $u$ is another neighbor of $x$. By the choice of $x$, one of $y_1,y_2,y_3$ lies on $P$.
  Since the vertices on $D_1$ at distance two from $u$ are 
  $v_1,v_{4k-3}$,
it follows that $\{y_1,y_2,y_3\}\cap V(D_1)\subseteq\{v_1,v_{4k-3}\}$. 
    Similarly, 
     since the vertices on $D_2$ at distance two from $u$ are 
  $v_3,v_{4k-1}$, we have $\{y_1,y_2,y_3\}\cap V(D_2)\subseteq\{v_3,v_{4k-1}\}$. 
    Note that $V(H')=V(D_1)\cup V(D_2)\cup\{v_{2k}\}$.  Consequently, $y_1,y_2,y_3\in\{v_1,v_3,v_{2k},v_{4k-3},v_{4k-1}\}$. Therefore, 
 $\{y_1,y_2,y_3\}$ is a diagonal triple of $C$ by Claim~\ref{phi4k3NeiDia},
which should be $\{v_1,v_{4k-1},v_{2k}\}$. But then none of $y_1,y_2,y_3$ is contained in $P$, a contradiction.
 This proves the claim.
\end{proof}

\begin{claim}\label{phi4k3TriLe2}
    For any vertex $x$ in $G$, if $x$ has exactly two neighbors in $P$, then $|N(x)\cap V(H')|=2$. 
\end{claim}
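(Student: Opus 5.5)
Write $P=v_2v_3\ldots v_{2k-1}$ as before and let $x$ have exactly two neighbours on $P$. The plan is to show that $x$ has no further neighbour in $V(H')=V(C)\cup\{u\}$. We pin down the two neighbours, then rule out a third neighbour first in $C$ and then equal to $u$, each time exhibiting a short odd cycle.

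\emph{Step 1 (the two neighbours).} Since $P$ lies on the odd cycle $D_2$ of length $2k+1$, the two neighbours of $x$ on $P$ are at distance two on $D_2$. Hence they are at distance two along $P$, so they are $v_{j-1},v_{j+1}$ for some $j$ with $3\le j\le 2k-2$. Suppose, for a contradiction, that $x$ has a third neighbour $y\in V(H')$.

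\emph{Step 2 ($y\in V(C)$).} Then $\{v_{j-1},v_{j+1},y\}$ is a triple of neighbours of $x$ in $C$, so by Claim~\ref{phi4k3NeiDia} it is a diagonal triple or one of the special triples $\{v_0,v_2,v_{4k-2}\}$, $\{v_{2k-2},v_{2k},v_{2k+2}\}$. Each special triple has only one vertex on $P$: $v_0,v_{4k-2}\notin V(P)$ and $v_{2k},v_{2k+2}\notin V(P)$. So neither special triple can contain both $v_{j-1}$ and $v_{j+1}$. Therefore the triple is diagonal, which forces $y=v_{2k+j}$, and $2k+3\le 2k+j\le 4k-2$. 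Now consider the cycle
\[
u\,v_2v_3\ldots v_{j-1}\,x\,v_{2k+j}v_{2k+j+1}\ldots v_{4k-2}\,u .
\]
Its vertices are distinct and its length is $1+(j-3)+2+(2k-2-j)+1=2k-1$. This is an odd cycle of length $2k-1$, a contradiction.

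\emph{Step 3 ($y=u$).} We split by the parity of $j$.
\begin{itemize}
\item If $j$ is odd, the cycle $u\,v_2\ldots v_{j-1}\,x\,u$ has odd length $j\le 2k-3$, a contradiction.
\item If $j$ is even, then $j\ge 4$. The cycle $u\,v_0\,v_{2k}\,v_{2k-1}\ldots v_{j+1}\,x\,u$ uses the chord $v_0v_{2k}$ of $H$. Its length is $2+(2k-1-j)+2=2k+3-j$, which is odd and at most $2k-1$, a contradiction.
\end{itemize}

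In every case we obtain an odd cycle of length at most $2k-1$, contradicting the odd girth assumption. Hence $|N(x)\cap V(H')|=2$.

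The only delicate point is Step 3, specifically finding the right short odd cycle through $u$ when $j$ is even. The naive routes through $v_2$ (via $D_1$ or via $v_0v_1$) give even or overly long cycles. Routing through the diagonal chord $v_0v_{2k}$ is what makes the length come out odd and short enough.
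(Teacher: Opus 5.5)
Your proof is correct and follows the paper's argument. Step~2 is exactly the paper's reasoning: Claim~\ref{phi4k3NeiDia} forces the diagonal triple $\{v_{j-1},v_{j+1},v_{2k+j}\}$, and the same cycle through $u$ has length $2k-1$. The only difference is Step~3, which the paper settles in one line by noting that $u,v_{j-1},v_{j+1}$ all lie on the $(2k+1)$-cycle $D_2$, so $x$ cannot be adjacent to all three. Your two parity cases are just the two cycles this observation produces (you use $v_0v_{2k}v_{2k-1}$ where $D_2$ has $v_0v_{4k-1}v_{2k-1}$), so that step is not actually delicate.
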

\begin{proof}
Suppose,  to the contrary, that $|N(x)\cap V(H')|\geq 3$.  Let $y_1,y_2$ be  two neighbors of $x$ on $P$ and $y_3$ be the other neighbor of $x$ in $H'$ distinct from $y_1,y_2$.
     Since $y_1,y_2,u\in V(D_2)$, by the odd girth assumption,
     we have $u\notin N(x)$, which implies that $y_1,y_2,y_3\in V(C)$.  Since $y_1,y_2$ are on the path $P$,
      $\{y_1,y_2,y_3\}$ is a diagonal triple of $C$ by Claim~\ref{phi4k3NeiDia}. Thus, we may, without loss of generality, assume that $y_1=v_{i-1},y_2=v_{i+1},y_3=v_{2k+i}$
     for some $i\in\{3,\ldots,2k-2\}$. But now the cycle $xv_{i+2k}v_{i+2k+1}\ldots v_{4k-2}uv_2v_3\ldots v_{i-1}x$ has length $((4k-2)-(i+2k))+(i-1-2)+4=2k-1$, a contradiction. This proves the claim.
\end{proof}

For $i\in\{0,1,2\}$, let 
$V_i:=\{x\in V(G):|N(x)\cap V(P)|=i\}$.
Since $P$ is contained in the cycle $D_2$, it follows that
 every vertex in $G$ has at most two neighbors in $P$, implying that $(V_0,V_1,V_2)$ is a partition of $V(G)$. By the definition of $V_i$, we 
infer that $\sum_{x\in V(P)}d(x)=|V_1|+2|V_2|$.
Recall that every vertex in $G$ has at most three neighbors in $C$, and therefore  at most four neighbors in $H'$.
Therefore, by Claims~\ref{phi4k3TriLe3} and ~\ref{phi4k3TriLe2}, 
 we have
\begin{align*}
\sum_{x\in V_0\cup V_1\cup V_2}|N(x)\cap V(H)|&\leq 4|V_0|+3|V_1|+2|V_2 | \\
&=4(n-|V_1|-|V_2|)+3|V_1|+2|V_2|\\
&=4n-\sum_{x\in V(P)}d(x)\\
&<4n-(2k-2)\frac{4n}{6k-1}.
\end{align*}

On the other hand, we have

\begin{align*}
\sum_{x\in V_0\cup V_1\cup V_2}|N(x)\cap V(H)|=\sum_{x\in V(H)}d(x)>(4k+1)\frac{4n}{6k-1}=4n-(2k-2)\frac{4n}{6k-1}, 
\end{align*}
 a final contradiction. 
The proof of Lemma~\ref{phi4k3ism4k} is complete.
\end{proof}

\section{$(2k+1)$-tetrahedrons with odd faces}

Given $k\geq 2$, let $\mathcal{T}_k$
be the set of 
 $(2k+1)$-tetrahedron graphs
 $T_{\ell_a,\ell_b,\ell_c}$ 
 consisting of 
\begin{itemize}
\item[(i)] one cycle $C(T_{\ell_a,\ell_b,\ell_c})$ (called {\it rim cycle}) with three branch vertices $a,b,c$ in $C(T_{\ell_a,\ell_b,\ell_c})$,
\item[(ii)] a center vertex $z$, and
\item[(iii)] internally vertex disjoint paths (called {\it spokes}) $P_{az}$, $P_{bz}$, $P_{cz}$ connecting the branch vertices with the center, and their lengths are $\ell_a,\ell_b,\ell_c$, respectively,
such that
\item[(iv)] each cycle in $T_{\ell_a,\ell_b,\ell_c}$
containing $z$ and exactly two of the branch
vertices
must have length $2k+1$ 
and each spoke has length at least one.
\end{itemize}     

Let $\mathscr{T}_1:=\{T\in \mathcal{T}_k$: at least two spokes of $T$ have length at least two$\}$
and  $\mathscr{T}_2:=\{T\in \mathcal{T}_k$: at least two spokes of $T$ have length one$\}$.
Clearly, $\mathcal{T}_k=\mathscr{T}_1\cup \mathscr{T}_2$.
 For the rim cycle $C(T_{\ell_a,\ell_b,\ell_c})$ of tetrahedron $T_{\ell_a,\ell_b,\ell_c}$ in $\mathcal{T}_k$ 
 with branch vertices $a,b,c$,
  we denote by $\mathbb{P}_{ab}$ the unique path in $C(T_{\ell_a,\ell_b,\ell_c})$ between $a$ and $b$ which does not contain $c$. Similarly, we define $\mathbb{P}_{ac}$ and $\mathbb{P}_{bc}$. 

    For tetrahedron $T_{\ell_a,\ell_b,\ell_c}$, the rim cycle $C(T_{\ell_a,\ell_b,\ell_c})$  has odd length $3(2k+1)-2(\ell_a+\ell_b+\ell_c)$.  We call 
    the cycle containing $z$ and the two branch vertices $a,b$ in $T_{\ell_a,\ell_b,\ell_c}$ {\it facial cycle}, and  denote it by $C_{ab}$.
    Similarly, we define $C_{ac}$ and $C_{bc}$. 
    For any two facial cycles, say $C_{ab}$ and $C_{ac}$, their symmetric difference $C_{ab}\oplus C_{ac}$ is an even cycle of length
     $\ell (C_{ab}\oplus C_{ac})=\ell(C_{ab})+\ell(C_{ac})-2\ell(P_{az})=4k+2-2\ell(P_{az}).$

\begin{lem}\label{xCycleIsT}
    Let $G$ be a maximal $\{C_3,C_5\ldots,C_{2k-1}\}$-free graph containing a cycle $C$ of length $2k+1$, and let $v$ be a vertex 
    in $V(G)\setminus V(C)$ such that the shortest path from $v$ to a vertex $u\in V(C)$, say $P_{vu}$, has length at most two and $N(v)\cap V(C)\subseteq \{u\}$. Then there exist two vertices $u_1,u_2$ in $C$ satisfying
    \[
dist_C(u,u_i) =
\begin{cases}
2, & \text{if } \ell(P_{vu})=1, \\
1, & \text{if } \ell(P_{vu})=2,
\end{cases}
\]
    together with two $(v,u_1),(v,u_2)$-paths, say $P_{vu_1}$ and $P_{vu_2}$, respectively, of length $2k-2$ in $G$ such that $T:=C\cup P_{vu}\cup P_{vu_1}\cup P_{vu_2}$ is a tetrahedron in $\mathcal{T}_k$
    with center $u$. In particular, if $\ell(P_{vu})=1$, then $T\in\mathscr{T}_1$.
\end{lem}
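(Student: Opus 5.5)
The plan is to build the two paths directly from maximality and then rule out every unwanted intersection by parity and length counting, as in the proof of Lemma~\ref{C61hasphi4k3}. Write $C=c_0c_1\ldots c_{2k}c_0$ with $c_0=u$, indices modulo $2k+1$.

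\textbf{Case $\ell(P_{vu})=1$.} Take $u_1:=c_2$ and $u_2:=c_{-2}$. By hypothesis $vu_1,vu_2\notin E(G)$. By maximality there is a $(v,u_i)$-path $P_{vu_i}$ of even length at most $2k-2$. Concatenating $P_{vu_1}$ with the path $vc_0c_1c_2$ of length $3$ gives an odd closed walk of length at most $2k+1$. Such a walk contains an odd cycle of length at most its own length, so the odd girth forces $\ell(P_{vu_1})=2k-2$, and the same holds for $P_{vu_2}$.

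The candidate tetrahedron has center $u$, branch vertices $v,c_2,c_{-2}$, and spokes $uv$, $uc_1c_2$, $uc_{-1}c_{-2}$ of lengths $1,2,2$. Its three facial cycles are:
\begin{itemize}
\item $uvP_{vu_1}c_2c_1u$, of length $1+(2k-2)+2=2k+1$;
\item the symmetric one $uvP_{vu_2}c_{-2}c_{-1}u$, also of length $2k+1$;
\item $C$ itself, of length $2k+1$.
\end{itemize}
Condition (iv) therefore holds, and since two spokes have length $2\ge 2$ we get $T\in\mathscr{T}_1$.

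\textbf{Case $\ell(P_{vu})=2$.} Let $P_{vu}=vwu$ and take $u_1:=c_1$, $u_2:=c_{-1}$. Since $dist(v,C)=2$, we have $vc_{\pm1}\notin E(G)$. Maximality gives even $(v,c_{\pm1})$-paths of length at most $2k-2$. Comparing each with the length-$3$ path $vwuc_{\pm1}$ forces both lengths to be exactly $2k-2$.

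Here the center is $u$, the branch vertices are $v,c_1,c_{-1}$, and the spokes are $uwv$, $uc_1$, $uc_{-1}$. The facial cycles are:
\begin{itemize}
\item $uwvP_{vu_1}c_1u$, of length $2+(2k-2)+1=2k+1$;
\item the symmetric one through $c_{-1}$, also of length $2k+1$;
\item $C$, of length $2k+1$.
\end{itemize}

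\textbf{Main obstacle: the configuration is a genuine subdivision.} The substantive step is to show that $P_{vu_1}$ and $P_{vu_2}$ are internally disjoint from $C$, from $P_{vu}$, and from each other. In particular, in the second case $P_{vu_i}$ must avoid $w$, and $w$ may itself have a second neighbour on $C$.

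For each possible intersection I would take the first common vertex $y$ and split the walks at $y$. This produces two closed walks whose total length is at most $(2k-2)+(2k+1)+3$, or at most $4k-4$ plus a short connector, depending on the case. The parities then force one of the two walks to be odd, and the length budget forces that odd walk to have length at most $2k-1$, which contradicts the odd girth.

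Concretely, suppose an inner vertex $y$ of $P_{vu_1}$ lies on $C$. Compare $vP_{vu_1}y$ with the two arcs of $C$ from $y$ to $c_2$ (respectively $c_1$) together with the path $vP_{vu}u$: some odd closed walk of length at most $2k-1$ appears. The cases where $y=u$ or $y=w$ are immediate from length counts, since $v$ is at distance $\ell(P_{vu})$ from $u$. Finally, if $P_{vu_1}$ and $P_{vu_2}$ meet at some vertex $y\ne v$, consider the walks $vP_{vu_1}y$ and $vP_{vu_2}y$. Either their lengths differ in parity, in which case pairing them with the arcs of $C$ between $u_1$ and $u_2$ gives a short odd cycle, or they have equal parity, in which case rerouting shortens one of the even paths and contradicts exact length $2k-2$. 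This is the same walk-pairing trick used in Lemma~\ref{C61hasphi4k3}.

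I expect this case bookkeeping, especially where $w$ has a second neighbour on $C$ in the second case, to be the most delicate part.
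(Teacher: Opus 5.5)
\textbf{There is a genuine gap: the disjointness claim your proof rests on is false.} Your argument needs the paths $P_{vu_1},P_{vu_2}$ supplied by maximality to be internally disjoint from $C$, from $P_{vu}$ and from each other. That would make the branch vertices exactly $v,u_1,u_2$, with spokes of lengths $1,2,2$ (resp.\ $2,1,1$). No parity count can establish this, because the offending configurations create no odd closed walk shorter than $2k+1$.

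Here is a concrete counterexample.
\begin{itemize}
\item[(a)] Let $k=3$ and $G=M_{12}$, with vertices $v_0,\dots,v_{11}$ and edges $v_iv_{i+1}$, $v_iv_{i+6}$. It has odd girth $7$, and it is maximal because every non-adjacent pair is joined by an even path of length at most $4$.
\item[(b)] Take $C=v_0v_1\cdots v_6v_0$, $v=v_8$, $u=v_2$. Then $N(v_8)\cap V(C)=\{v_2\}$ and $u_2=v_4$.
\item[(c)] A direct check shows that the \emph{only} $(v_8,v_4)$-path of length $4$ is $v_8v_7v_6v_5v_4$, which runs through $v_6,v_5\in V(C)$.
\item[(d)] The tetrahedron here has center $v_2$, branch vertices $v_8,v_0,v_6$, and spokes of lengths $1,2,4$. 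The tetrahedron you describe, with spoke $v_2v_3v_4$, does not exist.
\end{itemize}

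Two further steps of your sketch also fail.
\begin{itemize}
\item[(a)] If $P_{vu_1}$ and $P_{vu_2}$ meet at $y$ with $\ell(vP_{vu_1}y)=\ell(vP_{vu_2}y)$, there is no contradiction at all. The paths can share an initial segment, which moves the branch vertex off $v$.
\item[(b)] For an arbitrary choice, $P_{vu_1}$ can touch $C$, leave it, and return to $u_1$ through a different common neighbour. Then $C\cup P_{vu}\cup P_{vu_1}\cup P_{vu_2}$ is not even a subdivision of $K_4$.
\end{itemize}

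\textbf{What is missing is an extremal choice of the paths, combined with Lemma~\ref{2C2k+1isspoke}.}
\begin{itemize}
\item[(a)] Choose $P_{vu_1},P_{vu_2}$ to maximise their overlap with $C$ and with each other.
\item[(b)] Form the $(2k+1)$-cycles $D_1=vP_{vu_1}u_1\overrightarrow{C}(u_1,u)uP_{vu}v$ and $D_2=vP_{vu}u\overrightarrow{C}(u,u_2)u_2P_{vu_2}v$.
\item[(c)] Suppose $V(D_1)\cap V(C)$ contains a vertex $w$ off the shared arc $\overrightarrow{C}(u_1,u)$. Lemma~\ref{2C2k+1isspoke} gives $\ell(wP_{vu_1}u_1)=\ell(\overrightarrow{C}(w,u_1))$, so you can reroute $P_{vu_1}$ along $C$ without changing its length. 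The same applies to the other two pairs of cycles.
\item[(d)] Extremality therefore forces $G[V(D_1)\cap V(C)]$, $G[V(D_2)\cap V(C)]$ and $G[V(D_1)\cap V(D_2)]$ to be paths. These three paths are the spokes, and $T=C\cup D_1\cup D_2$ is a tetrahedron with center $u$.
\end{itemize}
The branch vertices are the far ends of these spokes, not necessarily $v,u_1,u_2$. That is why the statement fixes only the center. It is also why the paper later needs the degree condition (Claim~\ref{abcPos}) to locate the branch vertices in one special situation.

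The addendum $T\in\mathscr{T}_1$ when $\ell(P_{vu})=1$ still follows. The two spokes lying on $C$ contain $uc_1c_2$ and $uc_{-1}c_{-2}$, so both have length at least two.
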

\begin{proof}
We fix an orientation $\overrightarrow {C}$ of $C$.
Let $u_1,u_2$ be the two vertices in $C$ such that, along $\overrightarrow {C}$, they appear in the order $u_1,u,u_2$. Since $N(v)\cap V(C)\subseteq\{u\}$, by the maximality of $G$, there exist  $(v,u_1),(v,u_2)$-paths in $G$ of length  $2k-2$. Under all choices of such paths  we pick two, say $P_{vu_1}$ and $P_{vu_2}$, respectively,  such that $E(C)\cap E(P_{vu})\cap E(P_{vu_1})\cap E(P_{vu_2})$ has the maximum cardinality. 

Let $D_1:=vP_{vu_1}u_1\overrightarrow{C}(u_1,u)uP_{vu}v$ and $D_2:=vP_{vu}u\overrightarrow{C}(u,u_2)u_2P_{vu_2}v$ be two odd cycles of length $2k+1$ in $G$. 
Note that $\overrightarrow{C}(u_1,u)$ is a common path of $D_1$ and $C$.
If there exists a vertex $w\in V(C)\cap V(D_1)\setminus V(\overrightarrow{C}(u_1,u))$, then by Lemma~\ref{2C2k+1isspoke} $\ell(wP_{vu_1}u_1)=\ell(\overrightarrow{C}(w,u_1) )$. So, by the choice of $P_{vu_1}$, we conclude that 
$G[V(D_1)\cap V(C)]$ is a path. Similarly, since $\overrightarrow{C}(u,u_2)$ is a common path of cycles $C,D_2$, and $P_{vu}$ is a common path of cycles $D_1,D_2$,   we can conclude that $G[V(D_2)\cap V(C)]$ and $G[V(D_1)\cap V(D_2)]$ are both paths. Then $T=D_1\cup D_2\cup C=C\cup P_{vu_1}\cup P_{vu_2}\cup P_{vu}$ is in $\mathcal{T}_k$ with center $u$.
In particular, if $P_{vu}$ is an edge, then 
 the spokes $G[V(D_1)\cap V(C)],G[V(D_2)\cap V(C)]$ of $T$ both have length at least two, which implies that 
 $T\in \mathscr{T}_1$.
 This proves the lemma.
\end{proof}

\begin{lem}\label{4kIndOrDia}
        Let $G$ be a graph with odd girth at least $2k+1$ containing a tetrahedron $T_{\ell_a,\ell_b,\ell_c}\in \mathscr{T}_1$. If $uv\in E(G)$ but $uv\notin  E(T_{\ell_a,\ell_b,\ell_c})$ for some $u,v\in V(C_{ab})\cup V(C_{ac})$, then one of the following holds: 
        \begin{itemize}
        \item[(i)]  there is a new tetrahedron   $T_{\ell_a',\ell_b',\ell_c'}\in \mathscr{T}_1$ such that $\ell _a'>\ell_a,\ell_b'=\ell_b,\ell_c'=\ell_c$;
        \item[(ii)] there is a vertex $x\in\{a,b,c\}$ such that  $\ell_{x}=1$ and $uv$ is a diagonal chord in the cycle $C_{xy_1}\oplus C_{xy_2}$ where $\{y_1,y_2\}=\{a,b,c\}\setminus \{x\}$.
        \end{itemize}             
    \end{lem}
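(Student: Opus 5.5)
The plan is to work inside the structure of $T:=T_{\ell_a,\ell_b,\ell_c}$, whose facial cycles $C_{ab},C_{ac},C_{bc}$ all have length $2k+1$, and to run a case analysis on where the endpoints $u,v$ of the extra edge lie. Since $C_{ab}$ and $C_{ac}$ are odd cycles of length $2k+1$ and $G$ has odd girth at least $2k+1$, no edge of $G$ can be a chord of either cycle. A chord of a $(2k+1)$-cycle splits it into two cycles whose lengths sum to $2k+3$, one of them odd and of length at most $2k-1$. So $u$ and $v$ cannot both lie on $C_{ab}$, and cannot both lie on $C_{ac}$. Up to symmetry, this leaves $u\in V(C_{ab})\setminus V(C_{ac})$, i.e.\ $u$ an inner vertex of $P_{bz}$ or of $\mathbb{P}_{ab}$, and $v\in V(C_{ac})\setminus V(C_{ab})$, i.e.\ $v$ an inner vertex of $P_{cz}$ or of $\mathbb{P}_{ac}$.

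The key tool is Lemma~\ref{P1P2P3P4property}, applied to the $(2k+1)$-Theta graph formed by $C_{ab}\cup C_{ac}$. Its common path is $P=P_{az}$ with endpoints $a,z$, and the other two paths are $Q_1=C_{ab}-P_{az}$, containing $u$, and $Q_2=C_{ac}-P_{az}$, containing $v$. I would compare the outer paths $P_1,P_2$ with the intersecting paths $P_3,P_4$.

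In case (i) of Lemma~\ref{P1P2P3P4property}, $\ell_a=1$ and both cycles $uP_1vu$ and $uP_2vu$ have length $2k+1$. Then $P_1$ and $P_2$ each have length $2k$, which is half of the even cycle $C_{ab}\oplus C_{ac}$ of length $4k$. Hence $u$ and $v$ are antipodal on this cycle, so $uv$ is a diagonal chord, and alternative (ii) holds with $x=a$.

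In case (ii) of Lemma~\ref{P1P2P3P4property}, one of $uP_3vu$, $uP_4vu$, say $D:=uP_3vu$ (which passes through $P_{az}$), has length $2k+1$. I would then build a new tetrahedron. The cycles $D$, $C_{ab}$ and $C_{ac}$ pairwise share paths, so I would take the center to be the same point $z$ or move it, with the new spoke toward $a$ lengthened by absorbing the edge $uv$ together with segments of $Q_1$ and $Q_2$. Concretely, $u$ and $v$ become the new branch-side endpoints, giving three facial cycles: $D$ and the two old faces truncated at $u$ and $v$. I would verify that each new face has length $2k+1$ using the length identity $\ell(P_3)+\ell(P_4)=4k+2$ together with odd-girth arguments, exactly as in the proofs of Lemma~\ref{2C2k+1isspoke} and Lemma~\ref{P1P2P3P4property}. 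The $b$- and $c$-spokes stay unchanged, and the resulting spoke lengths give $\ell_a'>\ell_a$, $\ell_b'=\ell_b$, $\ell_c'=\ell_c$. Membership in $\mathscr{T}_1$ is then inherited, because $\ell_b,\ell_c$ are unchanged and $\ell_a$ only grows.

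The hard step is this second case, for two reasons. First, I must identify correctly which of $P_3,P_4$ gives the $(2k+1)$-cycle and show that the new configuration is really a tetrahedron: the spokes must be internally disjoint, and the new rim must be a cycle. Second, $u$ or $v$ may lie on the rim segments $\mathbb{P}_{ab},\mathbb{P}_{ac}$ rather than on the spokes, which forces a different relabelling of the branch vertices, and the subcases must be matched up so that it is always the $a$-spoke that grows. I expect to handle this by subcases on whether $u\in P_{bz}$ or $u\in\mathbb{P}_{ab}$, and similarly for $v$. The cases where $u\in P_{bz}$ and $v\in P_{cz}$ should be ruled out, or pushed into alternative (ii), using $C_{bc}$: $u$ and $v$ both lie on $C_{bc}$, so $uv$ would be a chord of it, which is impossible.
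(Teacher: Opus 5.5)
Your handling of case (i) of Lemma~\ref{P1P2P3P4property} is correct. The plan for case (ii) has a genuine gap. You aim always for alternative (i) with the $a$-spoke growing, and you only exclude the case where both $u,v$ lie on $P_{bz}\cup P_{cz}$. The mixed case cannot be handled that way: say $v$ is an inner vertex of $P_{cz}$ and $u$ an inner vertex of $\mathbb{P}_{ab}$. There the relevant intersecting path is $P_3=u\mathbb{P}_{ab}aP_{az}zP_{cz}v$. It contains only the part of the $c$-spoke between $z$ and $v$, so replacing $C_{ac}$ by $uP_3vu$ shortens the $c$-spoke and can leave $\mathscr{T}_1$.

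Here is a concrete instance. Take $k=2$ and $T=T_{2,1,2}$, with $\mathbb{P}_{ac}$ an edge and $\mathbb{P}_{ab},\mathbb{P}_{bc}$ of length two. Let $u$ be the middle vertex of $\mathbb{P}_{ab}$ and $v$ the middle vertex of $P_{cz}$. Then:
\begin{itemize}
\item $T+uv$ is triangle-free, and both $(u,v)$-outer paths in $C_{ab}\cup C_{ac}$ have length $3$.
\item Your replacement produces $T_{3,1,1}\notin\mathscr{T}_1$.
\item Alternative (i) is impossible outright: spokes with $\ell_a'\ge 3$ and $\ell_c'=2$ would give a face of length at least $\ell_a'+\ell_c'+1\ge 6>5$.
\end{itemize}
So in this case only alternative (ii) can hold, and your plan never reaches it.

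The missing idea is a second application of Lemma~\ref{P1P2P3P4property}, this time to the $(2k+1)$-Theta graph $C_{ab}\cup C_{bc}$ with common path $P_{bz}$. In that Theta graph, $P_3$ is one of the two $(u,v)$-outer paths, and it has even length by the first application. Hence both outer paths are even, so $\ell_b=1$ and $uv$ is a diagonal chord of $C_{ab}\oplus C_{bc}$. That is alternative (ii) with $x=b$, and symmetrically $x=c$ when $u\in P_{bz}$.

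Only when $u,v$ are both inner vertices of $\mathbb{P}_{ab}$ and $\mathbb{P}_{ac}$ does one obtain (i), and there your construction needs restating. Suppose $uP_3vu$ has length $2k+1$; the other case is symmetric under $b\leftrightarrow c$, $u\leftrightarrow v$.
\begin{itemize}
\item The three faces are $uP_3vu$ together with the \emph{unchanged} $C_{ab}$ and $C_{bc}$; the center is still $z$ and the branch vertices are $u,b,c$.
\item The new $a$-spoke is $C_{ab}[z,a,u]$, of length $\ell_a+\ell(a\mathbb{P}_{ab}u)>\ell_a$, while $P_{bz},P_{cz}$ are kept.
\item The edge $uv$ lies on the new rim, not on a spoke, and $v$ is not a branch vertex.
\end{itemize}
Your description (``absorbing the edge $uv$'' into the spoke, ``old faces truncated at $u$ and $v$'') does not describe a valid tetrahedron. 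With the correct description, no further length verification is needed, since all three faces are already known to have length $2k+1$.
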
    
    
    \begin{proof}
     For simplicity, we denote the graph $T_{\ell_a,\ell_b,\ell_c}$ by $T$. By the odd girth assumption, $u,v$ cannot lie in the same facial cycle of $T$. 
   Note that none of $C_{ab},C_{ac},C_{bc}$ contains a chord.
   Thus, we 
    may without loss of generality assume that $u$ is on the path $C_{ab}[a,b,z]$ and $v$ is on the path 
     $C_{ac}[a,c,z]$.
     Note that  $C_{ab}\cup C_{ac}$ is a $(2k+1)$-Theta graph, say $\Theta_1$. Let $P_1$ and $P_2$ be the $(u,v)$-outer paths in $\Theta_1$ containing $a$ and $z$, respectively, and let $P_3,P_4$  be the $(u,v)$-intersecting paths in $\Theta_1$ such that $P_1\oplus P_3=C_{ac}=P_2\oplus P_4$.  
     Note that the lengths of $P_1,P_2$ have the same parity. If $P_1,P_2$ both have even length, then        
     by Lemma~\ref{P1P2P3P4property}, $\ell_a=1$ and $uv$ is a diagonal chord of $C_{ab}\oplus C_{ac}$. This proves $(ii)$.

 Hence we assume that $P_1,P_2$ both have odd length. By Lemma~\ref{P1P2P3P4property},      $P_3,P_4$ both have even length, and one of the cycles $uP_3vu,uP_4vu$ has length $2k+1$ while the other  has length $2k+3$.
         Note that $P_{bz}\cup P_{cz}$ contains at most one of $u,v$; otherwise $uv$ is a chord in the facial cycle $C_{bc}$. Suppose first that $P_{bz}\cup P_{cz}$ contains exactly one of $u,v$. 
         Without loss of generality, we may assume that $v$ is on the spoke $P_{cz}$, then $u$ is on the path $\mathbb{P}_{ab}-\{a,b\}$. 
          Note that $C_{ab}\cup C_{bc}$ is a $(2k+1)$-Theta graph, say $\Theta_2$.
        Clearly,  $P_3$ is a $(u,v)$-outer path of $\Theta_2$. Since $P_3$ has even length, 
        it follows that
        the other $(u,v)$-outer path of $\Theta_2$ also has even length. 
       Thus,
        we have $\ell_b=1$ and $uv$ is a diagonal chord of $C_{ab}\oplus C_{bc}$  by Lemma~\ref{P1P2P3P4property}.
       This proves $(ii)$.
  
  Hence, we assume that $u,v$ are contained in the rim cycle $C(T)$, other than the branch vertices $a,b,c$ of $T$. Recall that one of the cycles $uP_3vu,uP_4vu$ has length $2k+1$. Without loss of generality, we assume that $\ell(uP_3vu)=2k+1$. Then replacing $C_{ac}$ by $uP_3vu$ in $T$ yields a tetrahedron $T_{\ell_a',\ell_b',\ell_c'}\in\mathcal{T}_k$. 
  Note that the spokes $P_{bz},P_{cz}$ of $T$ are also spokes of $T_{\ell_a',\ell_b',\ell_c'}$, and the other spoke $P_{az}$ of $T$ is replaced by a longer spoke  $C_{ab}[z,a,u]$ in $T_{\ell_a',\ell_b',\ell_c'}$.
  Since $\ell_a'>\ell_a,\ell_b'=\ell_b,\ell_c'=\ell_c$, we have $T_{\ell_a',\ell_b',\ell_c'}\in\mathscr{T}_1$. This proves $(i)$.
    \end{proof}

The following lemma shows that any tetrahedron in \(\mathscr{T}_1\) from a graph \(G \in \mathcal{G}_{n,k}\) necessarily contains a spoke of length one.

\begin{lem}\label{Tspoke1}
    Suppose $G\in\mathcal{G}_{n,k}$  contains a subgraph $T_{\ell_a,\ell_b,\ell_c}\in\mathscr{T}_1$. Then one of $\ell_a,\ell_b,\ell_c$ is equal to one.
    \end{lem}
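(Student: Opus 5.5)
We argue by contradiction together with an extremal choice. Among all tetrahedra in $\mathscr{T}_1$ contained in $G$, we pick one, $T=T_{\ell_a,\ell_b,\ell_c}$, maximizing $\ell_a+\ell_b+\ell_c$, and we suppose $\ell_a,\ell_b,\ell_c\geq 2$.

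\emph{Step 1: $T$ is induced.} Since no spoke has length one, alternative (ii) of Lemma~\ref{4kIndOrDia} is impossible. Alternative (i) would contradict the maximality of $\ell_a+\ell_b+\ell_c$. Every pair of vertices of $T$ lies in the union of two facial cycles, so applying the lemma to all three pairs of facial cycles shows that $G[V(T)]=T$.

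\emph{Step 2: the vertex count.} We have $|V(T)|=3(2k+1)-(\ell_a+\ell_b+\ell_c)+1-3+\ldots$. More carefully, the rim has $6k+3-2L$ vertices, where $L=\ell_a+\ell_b+\ell_c$. The spokes contribute $L-3$ inner vertices, plus the center. Hence $|V(T)|=6k+1-L\le 6k-5$.

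\emph{Step 3: the degree count.} We aim for the same double-counting scheme as in the proofs of Lemmas~\ref{phi4k1isM4k} and~\ref{phi4k3ism4k}. Every vertex $x$ has at most two neighbors on each facial cycle, and such neighbors are at distance two. Since every vertex of $T$ lies in at least two of the three facial cycles, a naive bound gives $|N(x)\cap V(T)|\le 3$. The crux is to show that for every $x$ this can be improved so that
\[
\sum_{x}|N(x)\cap V(T)|\le 3n-\sum_{y\in S}d(y)
\]
for a suitably chosen set $S\subseteq V(T)$ of size about $|V(T)|-(6k-1)\cdot\tfrac34$. Alternatively, we can use weights, choosing $S$ as the spoke interiors together with $z$. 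The argument runs as follows.

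\begin{itemize}
\item[(a)] Suppose $x$ has three neighbors in $T$, one on each facial cycle, pairwise at distance two on the appropriate faces. Then $x$ together with $T$ produces either a new tetrahedron with a longer spoke, contradicting maximality (via Lemma~\ref{P1P2P3P4property} applied to the Theta graphs $C_{ab}\cup C_{ac}$), or a short odd cycle. This restricts the triples of neighbors to very specific positions, essentially to triples around the center. Such an $x$ has no neighbor in the spoke interiors unless it sees $z$'s neighbors.
\item[(b)] We then bound $\sum_{y\in V(T)}d(y)=\sum_x|N(x)\cap V(T)|\le 3n-\sum_{y\in S}d(y)$ and compare it with $\delta(G)|V(T)|>\frac{4n}{6k-1}(6k+1-L)$. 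We get a contradiction once $4(6k+1-L)\ge 3(6k-1)+4|S|$, which we tune by choosing $S$. Since $L\ge 6$, there is room: $4(6k-5)=24k-20$ against $18k-3$.
\end{itemize}

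If three-per-vertex fails, we instead use the sharper fact that $x$ can have at most $2$ neighbors on $C_{ab}\cup C_{ac}$ in most positions, counted via the $\Theta$ structure.

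\emph{Main obstacle.} The key difficulty is the case analysis in Step 3(a): showing that a vertex outside $T$ with three neighbors on $T$ must create either a short odd cycle or a tetrahedron in $\mathscr{T}_1$ with a strictly larger spoke sum. For this, we will mimic Lemma~\ref{4kIndOrDia}, treating $x$ with two of its neighbors as a new path of length two playing the role of the edge $uv$. Concretely, if $x$ has neighbors $p\in C_{ab}$ and $q\in C_{ac}$, then the path $pxq$ of length two, combined with the Theta graph via Lemma~\ref{P1P2P3P4property}-type parity counting, forces one of the new faces to have length $2k+1$. That face replaces a facial cycle and lengthens a spoke.
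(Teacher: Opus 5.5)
Your framework (extremal tetrahedron, inducedness via Lemma~\ref{4kIndOrDia}, degree double counting) is the paper's. However, the two steps that carry the proof are either false or missing.

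\textbf{The bound of three neighbours.} Your bound $|N(x)\cap V(T)|\le 3$ rests on the claim that every vertex of $T$ lies on at least two facial cycles. That is false: the interior vertices of $\mathbb{P}_{ab},\mathbb{P}_{ac},\mathbb{P}_{bc}$ lie only on $C_{ab}$, $C_{ac}$, $C_{bc}$ respectively. The facial constraints alone allow two neighbours on each $\mathbb{P}$, i.e.\ six neighbours on the rim. The rim is an odd cycle of length $6k+3-2L$, possibly much longer than $2k+1$, so it gives no useful cap either.

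The paper must prove separately that no vertex has three neighbours on the rim. If two of them lie on one $\mathbb{P}$, it swaps $x$ for their common neighbour and contradicts inducedness of the new extremal tetrahedron. So you need inducedness for \emph{every} extremal tetrahedron, not just for $T$. If there is one on each $\mathbb{P}$, a parity argument gives a new facial cycle through $x$ that lengthens two spokes.

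Even then, counting over the rim and the three faces only yields four. A genuine four-neighbour configuration survives: two neighbours of $z$ on distinct spokes, plus one vertex on each of two $\mathbb{P}$'s. The paper excludes it only by building a $\Phi_{4k,3}$ and invoking Lemma~\ref{phi4k3ism4k}.

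\textbf{The counting does not close.} A lower bound on $\sum_{v\in V(T)}d(v)$ needs a lower bound on $|V(T)|$. The only one available is $|V(T)|=6k+1-L\ge 4k$, coming from $L\le 2k+1$ because the rim has length at least $2k+1$. Your bound $|V(T)|\le 6k-5$ points the wrong way. At $|V(T)|=4k$, three neighbours per vertex gives only $\frac{16k}{6k-1}n<\sum d\le 3n$, which is no contradiction. Everything therefore hinges on the improvement near a large set $S$, which you neither specify nor prove.

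There are two further problems with that step. The sign of $|S|$ in your inequality is reversed; the condition should read $4|V(T)|+4|S|\ge 3(6k-1)$. Also, the target $3n-\sum_{y\in S}d(y)$ with coefficient one cannot be obtained vertex by vertex, since a vertex with two neighbours in $S$ already contributes two.

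The paper handles this as follows. It sets $S_2=V(T)\setminus\bigl(N(a)\cup N(b)\cup N(c)\cup N(z)\bigr)$. Then $|S_1|\le 12$ (and $|S_1|\le 9$ when $k=3$), so $|S_2|\ge 4k-|S_1|$. It proves that a vertex with a neighbour in $S_2$ has at most two neighbours in $T$. The proof uses either the swap argument, or Lemma~\ref{P1P2P3P4property} on a Theta graph yielding a tetrahedron with larger spoke sum. It then uses $3|V_0|+2|V_1|+2|V_2|\le 3n-\frac12\sum_{v\in S_2}d(v)$, which leads to $6k+3<2|S_1|$, a contradiction.

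\textbf{Extremal choice.} Take the extremum only over tetrahedra whose spokes all have length at least two. Maximizing over all of $\mathscr{T}_1$ and then assuming the maximizer has no spoke of length one is not the negation of the lemma.
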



The following lemma shows that if a tetrahedron in \(\mathscr{T}_1\) from a graph \(G \in \mathcal{G}_{n,k}\) has a spoke of length one, then the two facial cycles incident with this spoke induce an \(M_{4k}\) in \(G\).

 \begin{lem}\label{T1spokeisM4k}
 Suppose $G\in\mathcal{G}_{n,k}$ 
has  a subgraph $T_{\ell_a,\ell_b,\ell_c}\in\mathscr{T}_1$ such that its length-one spoke 
belongs to 
both
        facial cycles $C_{ab},C_{ac}$.
        Then $G[V(C_{ab})\cup V(C_{ac})]$ is an induced copy of $M_{4k}$.
    \end{lem}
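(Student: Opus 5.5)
Throughout, $\ell_a=1$ (so $P_{az}$ is the single edge $az$), and $D:=C_{ab}\oplus C_{ac}$ is the even cycle of length $4k+2-2\ell_a=4k$.

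The plan is to recognise $D$ plus the edge $az$ as a copy of $\Phi_{4k,1}$, and then invoke Corollary~\ref{phi4k1isM4kcor}. Label $D=v_0v_1\ldots v_{4k-1}v_0$ with $v_0=a$. The cycle $C_{ab}$ is $a\,\mathbb{P}_{ab}\,b\,P_{bz}\,z\,a$ of length $2k+1$, so $z=v_{2k}$ along $D$; the same holds from the $C_{ac}$ side. Hence $az=v_0v_{2k}$ is a diagonal chord of $D$, and $H:=D\cup\{v_0v_{2k}\}=C_{ab}\cup C_{ac}$ is a $\Phi_{4k,1}$ with rim cycle $D$.

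By Corollary~\ref{phi4k1isM4kcor}, it then suffices to show that $H$ is diagonal. That is, for every vertex $x$ with at least three neighbours on $D$, every triple of those neighbours is a diagonal triple of $D$ or one of the two special triples. I would argue this exactly as in Claim~\ref{phi4k3NeiDia}.

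1. The two odd cycles $C_{ab}$ and $C_{ac}$ each contain at most two neighbours of $x$, and any two such neighbours are at distance two on that cycle. So, up to symmetry, two neighbours $y_1,y_2$ lie on $C_{ab}$ and the third, $y_3$, lies on $C_{ac}\setminus C_{ab}$.
2. If $\{y_1,y_2\}$ meets $\{v_0,v_{2k}\}$, a direct check gives either a diagonal triple or a special triple $\{v_{j-2},v_{j+2},v_j\}$ with $j\in\{0,2k\}$.
3. Otherwise $y_1=v_{j-1}$ and $y_2=v_{j+1}$ are interior to $C_{ab}-\{a,z\}$. Replace the segment $y_1v_jy_2$ by $y_1xy_2$ to get a $(2k+1)$-Theta graph formed with $C_{ac}$. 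Lemma~\ref{P1P2P3P4property}, applied to the edge $xy_3$, then gives one of two outcomes.
   (a) Both outer cycles through $x$ and $y_3$ have length $2k+1$, which forces $y_3=v_{2k+j}$ and a diagonal triple.
   (b) An intersecting cycle of length $2k+1$ exists. This cycle passes through $a$ and $z$ and uses $P_{bz}$ or $P_{cz}$.

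The main obstacle is ruling out case (b). In Claim~\ref{phi4k3NeiDia} this was done using the extra chords $v_1v_{2k+1}$ and $v_{2k-1}v_{4k-1}$, which are not available here. My replacement is the hypothesis $T\in\mathscr{T}_1$: at least two spokes have length at least two, so $\ell_b,\ell_c\ge2$. The remaining face $C_{bc}$ and the spokes then give additional short routes. In case (b), the $(2k+1)$-cycle $xP_3y_3x$, combined with the facial cycle $C_{bc}$ (or with $C_{ab}$), forms a Theta graph. Applying Lemma~\ref{2C2k+1isspoke} to the common segment through $z$ should force a length mismatch and hence an odd closed walk of length at most $2k-1$.

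If that fails, the fallback is a counting argument. Take $P$ to be a long path of $D$ avoiding $a$ and $z$, say the interior of $\mathbb{P}_{ab}\cup P_{bz}$. Then bound $|N(x)\cap V(H')|\le 4,3,2$ according as $x$ has $0,1,2$ neighbours on $P$, as in the proofs of Lemmas~\ref{phi4k1isM4k} and~\ref{phi4k3ism4k}. Summing degrees against $\delta(G)>\frac{4n}{6k-1}$ contradicts the existence of a bad triple.

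With $H$ shown to be diagonal, Corollary~\ref{phi4k1isM4kcor} gives that $G[V(H)]=G[V(C_{ab})\cup V(C_{ac})]$ is an induced $M_{4k}$.
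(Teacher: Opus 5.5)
Your reduction is the paper's: $C_{ab}\cup C_{ac}$ is a $\Phi_{4k,1}$ with rim $C_{ab}\oplus C_{ac}$ and chord $az$, so by Corollary~\ref{phi4k1isM4kcor} it suffices to prove diagonality. Your steps 1, 2 and 3(a) also match what the paper does.

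The gap is case 3(b), which is the entire difficulty. Your primary plan for it cannot succeed, because (b) is compatible with odd girth $2k+1$. For $k\ge 3$, take $T$ with $\ell_a=1$ and $\ell_b=\ell_c=2$, labelled so that $a=v_0$, $b=v_{2k-2}$, $z=v_{2k}$, $c=v_{2k+2}$. Add a vertex $x$ adjacent to $v_1,v_3,v_{4k-3}$. Listing the odd paths of $T$ between any two of these three vertices shows that each has length at least $2k-1$, so the new graph still has odd girth $2k+1$. It realises exactly your case (b), with the $(2k+1)$-cycle $xv_1v_0v_{2k}v_{2k+1}\cdots v_{4k-3}x$. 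Yet $\{v_1,v_3,v_{4k-3}\}$ is neither a diagonal nor a special triple. Such a graph can be completed to a maximal one on the same vertex set. Hence no argument using only Lemma~\ref{2C2k+1isspoke}, short odd closed walks, or maximality can exclude (b). The degree condition $\delta(G)>\frac{4n}{6k-1}$ must enter, and $T\in\mathscr{T}_1$ by itself does not help.

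Your counting fallback does not supply this. You never specify $H'$. The bounds $4,3,2$ in Lemmas~\ref{phi4k1isM4k} and~\ref{phi4k3ism4k} were derived from the diagonality of $H$, respectively from Claim~\ref{phi4k3NeiDia} via the extra chords $v_1v_{2k+1}$ and $v_{2k-1}v_{4k-1}$. That is precisely what is unavailable here. With $H'=H$ and $P=\{v_1,\dots,v_{2k-1}\}$ the sizes do add to $6k-1$, but the bound ``two neighbours on $P$ forces only two in $H'$'' already fails in $M_{4k}$ itself.

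The missing idea is to turn (b) into a tetrahedron all of whose spokes have length at least two, and then invoke Lemma~\ref{Tspoke1}. That lemma's counting over a rim-minimal tetrahedron is where the degree bound is actually used.

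If the common neighbour $y$ of $y_1,y_2$ on $C_{ab}$ is not $b$, replace $y$ by $x$. This gives a tetrahedron in $\mathscr{T}_1$ with the same spokes. Lemma~\ref{4kIndOrDia} applied to the edge $xy_3$ then yields one of two outcomes. Either $xy_3$ is a diagonal chord, making the triple diagonal. Or there is a tetrahedron with spoke lengths $\ell_a''\ge 2$, $\ell_b$, $\ell_c$, all at least two, contradicting Lemma~\ref{Tspoke1}.

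If $y=b$, the swap changes the spokes and a separate analysis is needed. For $y_3\in P_{cz}$, the paper builds a $\Phi_{4k,3}$ and applies Lemma~\ref{phi4k3ism4k}. For $y_3\in\mathbb{P}_{ac}$, the $P_3$-cycle again gives an all-long-spoke tetrahedron. The $P_4$-cycle instead gives a tetrahedron whose spoke lengths are forced by Lemma~\ref{Tspoke1}, ending in a $\Phi_{8,3}$ contradiction.
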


    The following result is a consequence of Lemmas~\ref{Tspoke1} and~\ref{T1spokeisM4k}.

\begin{cor}\label{T}  
Suppose $G\in\mathcal{G}_{n,k}$ contains a tetrahedron in $\mathscr{T}_1$. Then $G$ contains an induced copy of $M_{4k}$.     
    \end{cor}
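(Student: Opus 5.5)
This follows from Lemmas~\ref{Tspoke1} and~\ref{T1spokeisM4k}; the only real work is relabelling so that the hypothesis of Lemma~\ref{T1spokeisM4k} is met exactly.

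Let $T=T_{\ell_a,\ell_b,\ell_c}\in\mathscr{T}_1$ be a subgraph of $G\in\mathcal{G}_{n,k}$. By Lemma~\ref{Tspoke1}, one of $\ell_a,\ell_b,\ell_c$ equals one. The definition of $\mathcal{T}_k$ is symmetric in the three branch vertices, and membership in $\mathscr{T}_1$ is preserved under any permutation of them. So after relabelling we may assume $\ell_a=1$, i.e.\ the spoke $P_{az}$ is the single edge $az$.

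This spoke lies in exactly the two facial cycles through $a$, namely $C_{ab}$ and $C_{ac}$. We check this as stated in Lemma~\ref{T1spokeisM4k}: $C_{ab}$ consists of $P_{az}$, $P_{bz}$ and $\mathbb{P}_{ab}$, so $P_{az}\subseteq C_{ab}$, and similarly $P_{az}\subseteq C_{ac}$. Hence $T$, with length-one spoke $az$ contained in both $C_{ab}$ and $C_{ac}$, satisfies the hypothesis of Lemma~\ref{T1spokeisM4k}.

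That lemma then gives that $G[V(C_{ab})\cup V(C_{ac})]$ is an induced copy of $M_{4k}$. As a consistency check, $C_{ab}\cup C_{ac}$ has $2(2k+1)-2=4k$ vertices, because the two cycles share exactly the two vertices of the spoke $az$. This matches $|V(M_{4k})|=4k$.

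There is no real obstacle here beyond the relabelling. All the substantive work is in Lemmas~\ref{Tspoke1} and~\ref{T1spokeisM4k}, which we are allowed to assume.
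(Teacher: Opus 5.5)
Your proposal is correct and is exactly the paper's derivation: Lemma~\ref{Tspoke1} supplies a spoke of length one. After relabelling this spoke is $az$, which lies in both $C_{ab}$ and $C_{ac}$, and Lemma~\ref{T1spokeisM4k} then gives that $G[V(C_{ab})\cup V(C_{ac})]$ is an induced copy of $M_{4k}$.
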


Now  we present the proof of Lemmas~\ref{Tspoke1} and~\ref{T1spokeisM4k}.

    \begin{proof}[Proof of Lemma~\ref{Tspoke1}]
 Suppose not, and let \(T_{\ell_a,\ell_b,\ell_c} \in \mathscr{T}_1\) be a subgraph of \(G\) with \(\ell_a,\ell_b,\ell_c \ge 2\) whose rim cycle \(C(T_{\ell_a,\ell_b,\ell_c})\) has the smallest possible length among all such tetrahedrons.  Let $\ell$ be the length of $C(T_{\ell_a,\ell_b,\ell_c})$.
For convenience, we abbreviate   $T_{\ell_a,\ell_b,\ell_c}$ to $T$, and  $C(T_{\ell_a,\ell_b,\ell_c})$ to $C$.

\begin{claim}\label{induced}
For every subgraph  tetrahedron $T':=T_{\ell_a',\ell_b',\ell_c'}\in \mathscr{T}_1$ in $G$ with $\ell_a',\ell_b',\ell_c' \ge 2$ and $\ell (C(T'))=\ell$,
$T'$ is induced, i.e., $T'=G[V(T')]$.
\end{claim}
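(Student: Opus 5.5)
The claim should follow from Lemma~\ref{4kIndOrDia} together with the minimality of $\ell$. Suppose some such $T'$ is not induced, so there is an edge $uv\in E(G)\setminus E(T')$ with $u,v\in V(T')$.

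\textbf{Step 1: $u,v$ lie in the union of two facial cycles.} Every vertex of $T'$ lies on at least two of the facial cycles $C_{ab},C_{ac},C_{bc}$. A vertex of $\mathbb{P}_{ab}$ lies only on $C_{ab}$, a spoke vertex lies on two facial cycles, and $z$ lies on all three. The only cases needing care are $u\in\mathbb{P}_{ab}$ and $v\in\mathbb{P}_{bc}$, say. Then both $u$ and $v$ lie in $V(C_{ab})\cup V(C_{bc})$, since these two facial cycles share the spoke through $b$. After relabeling, we may therefore apply Lemma~\ref{4kIndOrDia} with the appropriate branch vertex playing the role of $a$.

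\textbf{Step 2: the alternatives of Lemma~\ref{4kIndOrDia}.} Alternative (ii) requires some spoke of length one. This is impossible, since all of $\ell_a',\ell_b',\ell_c'\ge 2$. So alternative (i) holds, and there is a tetrahedron $T''\in\mathscr{T}_1$ with one spoke strictly longer and the other two spokes unchanged.

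In $T''$ all three spokes still have length at least two. Its rim has length
\[
3(2k+1)-2(\ell_a''+\ell_b''+\ell_c'') < 3(2k+1)-2(\ell_a'+\ell_b'+\ell_c') = \ell .
\]
This contradicts the choice of $T$ as a tetrahedron in $\mathscr{T}_1$ with all spokes at least two and rim of minimum length. Hence $T'$ is induced.

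\textbf{Main obstacle.} The delicate point is Step 1. One must check that the two endpoints of the chord can always be placed in $V(C_{xy})\cup V(C_{xw})$ for some branch vertex $x$, which is the setup of Lemma~\ref{4kIndOrDia}. This holds because any two facial cycles together cover all vertices except the interior of the third rim path. Given two vertices, pick $x$ so that neither of them lies in the interior of the rim path opposite to $x$. If both vertices are interior to rim paths, those paths are at most two distinct ones, and since there are three branch vertices, such an $x$ exists.
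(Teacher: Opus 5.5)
Your proof is correct and follows the paper's argument. Since all spokes of $T'$ have length at least two, alternative (ii) of Lemma~\ref{4kIndOrDia} is impossible, and alternative (i) then gives a tetrahedron in $\mathscr{T}_1$ whose spokes all have length at least two and whose rim is strictly shorter than $\ell$, contradicting the choice of $T$. Your covering argument just spells out the paper's ``without loss of generality $u,v\in V(C_{ab})\cup V(C_{ac})$''. One small slip: the opening sentence of your Step~1, that every vertex lies on at least two facial cycles, is false for interior vertices of the rim paths, as your next sentence itself shows; your final paragraph does not rely on it.
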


\begin{proof}
Suppose,  to the contrary, that there is an edge $uv\in E(G[V(T')])\setminus E(T')$. Clearly, $u,v$ cannot lie in the same facial cycle  of $T$. Without loss of generality, we may assume that $u,v\in V(C_{ab})\cup V(C_{ac})$. Since all spokes of $T'$ have length at least two,  there exists a new tetrahedron  $T_{\ell_a'',\ell_b'',\ell_c''}$ such that $\ell_a''>\ell_a',\ell_b''=\ell_b',\ell_c''=\ell_c'$  by Lemma~\ref{4kIndOrDia}. Thus, we have $\ell(C(T_{\ell_a'',\ell_b'',\ell_c''}))<\ell$, a contradiction.
\end{proof}

\begin{claim} \label{C(T)}
    For any vertex $x$ in $G$, $x$ cannot have three neighbors in the rim cycle $C$.
\end{claim}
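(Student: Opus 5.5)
Suppose, to the contrary, that some vertex $x$ has three distinct neighbors $y_1,y_2,y_3$ on the rim cycle $C$. The plan is to use these neighbors to build a tetrahedron in $\mathscr{T}_1$ whose spokes all have length at least two but whose rim is strictly shorter than $\ell$. This contradicts the minimal choice of $T$. If instead the construction produces a spoke of length one, or a chord, we aim to contradict Claim~\ref{induced} or the odd girth.

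First we locate the neighbors. Each facial cycle $C_{ab},C_{ac},C_{bc}$ has length $2k+1$. So $x$ has at most two neighbors on each of them, and two neighbors on the same facial cycle are at distance exactly two on it. The rim $C$ is the union of $\mathbb{P}_{ab},\mathbb{P}_{ac},\mathbb{P}_{bc}$, and $\mathbb{P}_{ab}\subseteq C_{ab}$, etc. Since $C$ is odd of length $\ell\le 3(2k+1)-12$, a vertex with three neighbors on $C$ spreads them over different faces. We split into cases by which rim segments contain $y_1,y_2,y_3$:
\begin{itemize}
\item[(a)] two of them lie on one segment, say $\mathbb{P}_{ab}$, at distance two there;
\item[(b)] they lie on three different segments.
\end{itemize}
Branch vertices need separate, routine bookkeeping.

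Case (b) is the main construction. Take $x$ as a new center and $y_1,y_2,y_3$ as new branch vertices, with spokes $xy_i$ of length one. The three new faces are formed from the edges $xy_i$ together with arcs of $C$ between consecutive $y_i$. Every closed walk through $x$ and two of the $y_i$ is odd and has length at least $2k+1$. Since the three arcs of $C$ partition it, their lengths sum to $\ell$. Comparing with the odd-cycle structure of $T$ via Lemma~\ref{2C2k+1isspoke} and Lemma~\ref{P1P2P3P4property}, applied to the Theta graphs $C_{ab}\cup C_{ac}$, etc., should force each such face to have length exactly $2k+1$. This makes $C\cup\{xy_1,xy_2,xy_3\}$ a tetrahedron.

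Its spokes have length one, so it lies in $\mathscr{T}_2$ rather than $\mathscr{T}_1$. The better approach is to use two of the edges from $x$ to reroute one face of $T$. Suppose $y_1\in\mathbb{P}_{ab}$ and $y_2\in\mathbb{P}_{ac}$. Then the path $y_1xy_2$ together with part of the rim near $a$ gives an odd cycle, which Lemma~\ref{P1P2P3P4property}(ii) (applied with $u=y_1$, $v=y_2$ after subdividing through $x$) forces to have length $2k+1$. Replacing $C_{ac}$ or $C_{ab}$ by this cycle, exactly as in the proof of Lemma~\ref{4kIndOrDia}(i), lengthens the spoke at $a$ by cutting through $x$. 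This strictly shortens the rim while keeping the other two spokes, which have length at least two. That contradicts minimality.

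In case (a) we argue similarly, using $y_3$ on the opposite face: the two neighbors at distance two on $\mathbb{P}_{ab}$ combine with $y_3$ into a shortcut through $x$. Any odd cycle of length at most $2k-1$ arising in the computation is excluded by the odd girth. Any edge that would have to lie inside $V(T)$ is excluded by Claim~\ref{induced}.

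The main obstacle is the length bookkeeping that guarantees two things. First, the rerouted cycle has length exactly $2k+1$ rather than $2k+3$. Second, the resulting new spokes are internally disjoint paths of length at least two. The latter needs $x\notin V(T)$, which Claim~\ref{induced} gives whenever $x\in V(T)$ would create a chord. I would try first to reduce everything to Lemma~\ref{P1P2P3P4property} on the appropriate Theta graph. That lemma already yields the dichotomy between lengths $2k+1$ and $2k+3$, and choosing the correct side gives the shorter rim.
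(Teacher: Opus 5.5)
Your case (b) has a genuine gap at its first step. The cycle $xy_1C[y_1,a,y_2]y_2x$ is odd only if the rim arc $C[y_1,a,y_2]$ is odd, and for an arbitrarily chosen pair of neighbors this need not hold. This is where the third neighbor has to be used. $C$ is odd and is the union of $C[y_1,a,y_2]$, $C[y_2,c,y_3]$ and $C[y_3,b,y_1]$, so one of these arcs is odd, and you must take the pair spanning it.

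You also cannot fall back on the mechanism of Lemma~\ref{4kIndOrDia}(i) for an arbitrary pair. Replace the edge $uv$ there by the path $y_1xy_2$ of length two. Let $p,q$ be the distances of $y_1,y_2$ from $a$ along the rim. The intersecting paths $y_1C_{ab}[y_1,a,z]zC_{ac}[z,c,y_2]y_2$ and $y_1C_{ab}[y_1,b,z]zC_{ac}[z,a,y_2]y_2$ of the Theta graph $C_{ab}\cup C_{ac}$ then have lengths $2k+1+(p-q)$ and $2k+1-(p-q)$. When $p=q$, both cycles they close through $x$ have length $2k+3$, and the two outer cycles through $x$ are even. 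So the pair $y_1,y_2$ alone gives no contradiction, and there is no ``correct side'' to choose. The $2k+1$ versus $2k+3$ dichotomy of Lemma~\ref{P1P2P3P4property}(ii) needs an edge between $u$ and $v$, not a path of length two.

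Even after choosing the odd arc, the replacement you describe is the wrong one. The cycle $xy_1C[y_1,a,y_2]y_2x$ avoids $z$, so it cannot replace $C_{ab}$ or $C_{ac}$ in a tetrahedron with center $z$, and no spoke at $a$ gets lengthened. The missing step is to pass to the complementary path $P=y_1C_{ab}[y_1,b,z]zC_{ac}[z,c,y_2]y_2$ of the even cycle $C_{ab}\oplus C_{ac}$.

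\begin{itemize}
\item Since $\ell(C[y_1,a,y_2])+\ell(P)=4k+2-2\ell_a$ is even, $P$ is odd as well.
\item Each of these two odd paths has length at least $2k-1$, since otherwise it closes a short odd cycle through $x$.
\item Their sum is at most $4k-2$ because $\ell_a\ge 2$, so both have length exactly $2k-1$.
\end{itemize}

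Hence $xy_1Py_2x$ is a $(2k+1)$-cycle through $z$, and it replaces $C_{bc}$. The spoke $P_{az}$ is kept, the spokes at $b$ and $c$ become strictly longer, and the rim becomes strictly shorter. This contradicts the choice of $T$.

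Your case (a) is likewise only gestured at; a ``shortcut through $x$'' is not what happens there. Let $u$ be the common neighbor of $y_1,y_2$ on $\mathbb{P}_{ab}$. Swapping $u$ for $x$ gives a tetrahedron in $\mathscr{T}_1$ with the same spokes and rim length $\ell$. Claim~\ref{induced} applies to it, yet $xy_3$ is a chord of it.
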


\begin{proof}
Suppose not, and let $x$ be a vertex in $G$ with three neighbors $y_1,y_2,y_3$ in the rim cycle $C(T)$. 
Note that $C=\mathbb{P}_{ab}\cup \mathbb{P}_{ac}\cup \mathbb{P}_{bc}$.
If one of $\mathbb{P}_{ab},\mathbb{P}_{ac},\mathbb{P}_{bc}$, say $\mathbb{P}_{ab}$, contains two of $y_1,y_2,y_3$, say $y_1,y_2$, then $dist_{C_{ab}}(y_1,y_2)=2$ by the odd girth assumption. Let $u$ be the common neighbor of $y_1,y_2$ in $C_{ab}$. Then replacing $u$ by $x$ in $T$ yields a new tetrahedron $T'\in \mathscr{T}_1$ which has the same spokes as $T$. However, $T'$ is not an induced subgraph of $G$ because $xy_3\in E(G)$, contradicting Claim~\ref{induced}.

Hence, each of $\mathbb{P}_{ab},\mathbb{P}_{ac},\mathbb{P}_{bc}$ contains exactly one of $y_1,y_2,y_3$. Without loss of generality, we may assume that $y_1\in V(\mathbb{P}_{ab}),y_2\in V(\mathbb{P}_{ac}),y_3\in V(\mathbb{P}_{bc})$. Since the rim cycle 
$C:=C[y_1,a,y_2]\cup C[y_2,c,y_3]\cup C[y_3,b,y_1]$
in $T$ is an odd cycle, we infer that one of the three paths $C[y_1,a,y_2],C[y_2,c,y_3],C[y_3,b,y_1]$ has odd length, say $C[y_1,a,y_2]$. Consider the path $P:=y_1C_{ab}[y_1,b,z]zC_{ac}[z,c,y_2]y_2$.
Since $\ell(C[y_1,a,y_2]\cup P)=\ell(C_{ab}\oplus C_{ac})=4k+2-2\ell_a$, 
 we infer that 
$P$ has odd length. Consequently, both \(C[y_1,a,y_2]\) and \(P\) have length \(2k-1\); otherwise one of them would be an odd path of length at most \(2k-3\), which together with \(x\)  would yield an odd cycle of length at most \(2k-1\). Therefore, the odd cycle $xy_1Py_2x$ has length $2k+1$. As a result, replacing $C_{bc}$ by $xy_1Py_2x$ in $T$ yields a new tetrahedron  $T_{\ell_a',\ell_b',\ell_c'}\in\mathscr{T}_1$.
Note that the spoke $P_{az}$ of $T$ is also a spoke of $T_{\ell_a',\ell_b',\ell_c'}$, and the other spokes $P_{bz},P_{cz}$ of $T$ are replaced by longer spokes $C_{ab}[y_1,b,z],C_{ac}[z,c,y_2]$, respectively. So
$\ell_a'=\ell_a, \ell_b'>\ell_b,\ell_c'>\ell_c$, which implies that $\ell(C(T_{\ell_a',\ell_b',\ell_c'}))<\ell$.
But this contradicts the choice of $T$, thereby proving the claim.
\end{proof}

Let $t:=|V(T)\setminus V(C)|=(\ell_a-1)+(\ell_b-1)+(\ell_c-1)+1=\ell_a+\ell_b+\ell_c-2$.
Since $\ell_a,\ell_b,\ell_c\geq 2$, we have $t\geq 4$. On the other hand,
since the rim cycle $C$  has length
$\ell=3(2k+1)-2(\ell_a+\ell_b+\ell_c)=6k-1-2t$ and 
 $G$ has odd girth at least $2k+1$, we infer that
$t \leq 2k-1$, which implies that 
$k\geq3$ as $t\geq 4$.
Set $S_1:=(N(a)\cup N(b)\cup N(c)\cup N(z))\cap V(T)$
and $S_2=V(T)\setminus S_1$. 
Clearly, $|S_1|\leq 12$. Possibly, $a,b,c\in S_1$ if some of $\mathbb{P}_{ab},\mathbb{P}_{ac},\mathbb{P}_{bc}$ are edges.
Note that $|V(T)|=t+|V(C)|=6k-1-t$. Hence $|S_2|=6k-1-t-|S_1|$. In particular, if $k=3$, then $t=4$ or $5$. Since $t=\ell_a+\ell_b+\ell_c-2$,
 $T$ is isomorphic to $T_{2,2,2}$ or $T_{2,2,3}$, it is easy to check that $|S_1|\leq 9$.

\begin{claim}\label{leq3}
For any vertex $x$ in $G$ with no neighbor in $S_2$, we have $|N(x)\cap V(T)|\leq 3$.
\end{claim}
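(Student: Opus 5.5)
The plan is to argue by contradiction and reuse the ``vertex replacement'' trick from the proof of Claim~\ref{C(T)}. Suppose $x$ has no neighbor in $S_2$ but $|N(x)\cap V(T)|\geq 4$; then all these neighbors lie in $S_1$.

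\textbf{Step 1 (structure of the neighbors).} $V(T)=V(C_{ab})\cup V(C_{ac})\cup V(C_{bc})$, and each facial cycle has length $2k+1$. By the odd girth assumption, $x$ has at most two neighbors on each facial cycle, and two such neighbors are at distance two on that cycle. By Claim~\ref{C(T)}, at most two neighbors of $x$ lie on the rim $C$, so at least two lie in $V(T)\setminus V(C)$.

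\textbf{Step 2 (which off-rim vertices are possible).} The candidates in $V(T)\setminus V(C)$ are the interior spoke vertices in $S_1$: the neighbors $z_a,z_b,z_c$ of $z$ and the neighbors $a',b',c'$ of the branch vertices on the spokes. The center $z$ itself is excluded, since $z\in S_1$ would force a spoke of length one. Spoke vertices lie on two facial cycles, so the distance-two condition applies heavily. Two neighbors on different spokes, say $P_{az}$ and $P_{bz}$, both lie on $C_{ab}$. There they must be at distance two, which, since $\ell_a,\ell_b\ge 2$ and $\mathbb P_{ab}$ is long enough, forces them to be $z_a,z_b$. Two neighbors on the same spoke force that spoke to have length $4$, with the neighbors being $a'$ and $z_a$. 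I would dispose of this case by a direct parity/short-cycle count: combine $x a' \cdots z_a x$ with another neighbor of $x$ and the facial cycles through that spoke, aiming for an odd closed walk of length at most $2k-1$.

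\textbf{Step 3 (replacement).} By pigeonhole (four neighbors, three facial cycles), some facial cycle contains two neighbors $y_1,y_2$ of $x$ with common neighbor $w$ on that cycle. There are two cases.
\begin{itemize}
\item If $w$ has degree two in $T$, then $T'=(T-w)+x$, with $x$ joined to $y_1,y_2$, is again a tetrahedron in $\mathscr{T}_1$ with the same spoke lengths and rim length $\ell$. A third neighbor of $x$ in $T$ is then a chord of $T'$, contradicting Claim~\ref{induced}.
\item If $w\in\{a,b,c,z\}$, the replacement only works if $x$ is adjacent to all three $T$-neighbors of $w$. When $w=z$, the pair is $\{z_a,z_b\}$, and by Step 2 the remaining off-rim neighbor must be $z_c$ or make a forbidden configuration. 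Then $x$ is adjacent to $z_a,z_b,z_c$, so $x$ replaces $z$ as the center, and the fourth neighbor gives a non-induced copy, again contradicting Claim~\ref{induced}.
\end{itemize}

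\textbf{Main obstacle.} The hardest case is $w$ a branch vertex, say $w=a$ with $y_1,y_2$ the two rim neighbors of $a$, where $x$ is not adjacent to $a'$. Here I would exploit that the other two neighbors of $x$ are off-rim (by Claim~\ref{C(T)}) and hence, by Step 2, among $z_a,z_b,z_c$. One then finds a cycle through $x$, $y_1$ or $y_2$, a rim segment, and a spoke to $z$, and checks its parity and length, expecting an odd cycle of length below $2k+1$. Alternatively, one could produce a new tetrahedron with strictly longer spokes, hence a shorter rim, contradicting the minimality of $\ell$ as in Claim~\ref{C(T)}. Finishing this case analysis, including small cases such as $k=3$ where $T\cong T_{2,2,2}$ or $T_{2,2,3}$ and many of the $S_1$ vertices coincide, is where I expect most of the work.
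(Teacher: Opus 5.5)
There is a genuine gap. Your Steps 1--2 and the replacement argument in the first bullet of Step 3 are sound, and they match how the paper handles two neighbors on a common spoke. The problem is the configuration that actually carries the claim, which your case split never reaches.

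\textbf{The missed configuration.} Suppose two off-rim neighbors lie on different spokes. By your Step 2 they are $z_a,z_b$. Then $C_{ab}$ already contains two neighbors of $x$, and $C_{ac}$ and $C_{bc}$ contain one each. So the remaining two neighbors $y_3,y_4$ cannot lie on $C_{ab}$, on $P_{cz}$, or at $c$. They must be interior vertices of $\mathbb{P}_{bc}$ and $\mathbb{P}_{ac}$, one on each. The distance-two condition on $C_{bc}$ and on $C_{ac}$ then forces $y_3\sim b$, $y_4\sim a$, and $\ell_a=\ell_b=2$.

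In this configuration the only pairs of neighbors sharing a facial cycle are $\{z_a,z_b\}$, $\{z_b,y_3\}$ and $\{z_a,y_4\}$. Their common neighbors are $z$, $b$ and $a$, so your replacement bullet never applies. Your other two cases do not cover it either:
\begin{itemize}
\item Your $w=z$ bullet assumes a third off-rim neighbor $z_c$. But $x\sim z_a,z_b,z_c$ already puts two neighbors of $x$ on every facial cycle, so no fourth neighbor can exist. That subcase is vacuous, and the real one (two rim neighbors) is skipped.
\item Your ``main obstacle'' treats the two rim neighbors of $a$ as a pair on a common facial cycle. They are not: one lies on $C_{ab}$ and the other on $C_{ac}$. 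The genuine pair at $a$ is $\{z_a,y_4\}$.
\end{itemize}

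\textbf{Why your proposed tools do not close it.} Your short-odd-cycle approach cannot work as stated. In this configuration one checks that every odd path in $T$ between two neighbors of $x$ has length at least $2k-1$. Hence $T\cup\{x\}$ has no odd cycle shorter than $2k+1$, and the contradiction must use the minimum-degree hypothesis.

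The paper does this through Lemma~\ref{phi4k3ism4k}. Consider the cycle $C'=xz_a a\,\mathbb{P}_{ab}\,b z_b z\,P_{zc}\,c\,\mathbb{P}_{cb}\,y_3x$, which has length $4k$. It carries the three consecutive diagonal chords $zz_a$, $xz_b$ and $by_3$, so it contains a $\Phi_{4k,3}$. The lemma then makes $G[V(C')]$ an induced $M_{4k}$. Its diagonal chord at $a$ joins $a$ to the neighbor of $z$ on $P_{cz}$ (using $\ell_a=2$). That edge is a chord of $C_{ac}$ and closes an odd cycle of length $2k-1$.

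This reduction to $\Phi_{4k,3}$, or some equivalent use of the degree condition, is the missing idea. Your tetrahedron-with-shorter-rim alternative is not carried out either, so as written the proof does not go through.
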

\begin{proof}
    Suppose not, and let $x$ be a vertex in $G$ with four neighbors $y_1,y_2,y_3,y_4$ in $S_1$.
By Claim~\ref{C(T)}, $C$  contains at most two of $y_i$. So, at least two of $y_1,y_2,y_3,y_4$, say $y_1,y_2$, are on the spokes of $T$ excluding branch vertices
$a,b,c$. If $y_1,y_2$ belong to the same spoke of $T$, then $y_1,y_2$ are at distance two along this spoke. Replacing  the common neighbor of $y_1,y_2$ on this spoke by $x$ in $T$ yields a new tetrahedron  $T'\in \mathscr{T}_1$  
with $\ell (C(T'))=\ell$. However, $T'$ is not an induced subgraph of $G$ because $xy_3\in E(G)$, contradicting Claim~\ref{induced}.

Hence, without loss of generality, we may assume that $y_1,y_2$ are on the spokes  $P_{az}-\{a\},P_{bz}-\{b\}$, respectively. 
Then $dist_{C_{ab}}(y_1,y_2)=2.$, which implies that $z$ is the common neighbor of $y_1,y_2$.
Since none of $C_{ab},C_{ac},C_{bc}$ can contain three of $y_1,y_2,y_3,y_4$, the spoke $P_{cz}$ cannot contain any of $y_3,y_4$.
Thus,  we may without loss of generality assume that  $y_3$ and $y_4$ are contained in
the paths $\mathbb{P}_{bc}-\{b,c\}$ and $\mathbb{P}_{ac}-\{a,c\}$, respectively. Since
$y_2,y_3$ are contained  in the same odd cycle $C_{bc}$, it follows that 
$dist_{C_{bc}}(y_2,y_3)=2$, we can conclude that 
$b$ is a common neighbor of $y_2,y_3$. Therefore, $\ell_b=2$ because $b,z$ are both neighbors of $y_2$.

Let $C':=xy_1C_{ab}[y_1,a,z]zC_{bc}[z,c,y_3]y_3x$ 
be a cycle of $G$.
Clearly, $C'$ has length $2k+(2k+1-3)+2=4k$.
Note that $zy_1,y_2x,y_3b$ are three diagonal chords in $G[V(C')]$, which together with the cycle $C'$, yield a copy of $\Phi_{4k,3}$.
By Lemma~\ref{phi4k3ism4k}, $G[V(C')]$ is an induced copy of $M_{4k}$, and let $va$ be the diagonal chord in this new $M_{4k}$. Clearly,  $v$ is the neighbor of $z$ on the spoke $P_{cz}$.
But then $va$ is a chord of $C_{ac}$, which yields an odd cycle of length at most $2k-1$, a contradiction. This proves the claim.
\end{proof}

\begin{claim}\label{leq2}
For any vertex $x$ in $G$ with at least one neighbor in $S_2$, we have $|N(x)\cap V(T)|\leq 2$.
Consequently, every vertex in $G$ has at most two neighbors in $S_2$.
\end{claim}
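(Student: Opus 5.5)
We argue by contradiction. Suppose some vertex $x$ has a neighbor $y_1\in S_2$ and a second neighbor $y_2\in V(T)$ with $y_2\neq y_1$; we aim for a contradiction with the minimality of $T$ or with Claims~\ref{induced}--\ref{leq3}. (Here ``$|N(x)\cap V(T)|\leq 2$'' is read together with the consequence, so the real content is to rule out three neighbors, and to pin down the structure when there are two.) So assume $x$ has three neighbors $y_1,y_2,y_3\in V(T)$ with $y_1\in S_2$. Every vertex of $T$ lies on at least two of the facial cycles $C_{ab},C_{ac},C_{bc}$, and each of these is an odd cycle of length $2k+1$. Hence no facial cycle contains all three $y_i$, and any two $y_i$ on a common facial cycle are at distance exactly two on it.

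The first case is that two of the $y_i$ lie on the same spoke, or on the same segment $\mathbb{P}_{ab},\mathbb{P}_{ac},\mathbb{P}_{bc}$ of the rim, at distance two. Then we replace their common neighbor $w$ by $x$, exactly as in Claims~\ref{C(T)} and~\ref{leq3}. This gives a tetrahedron $T'\in\mathscr{T}_1$ with the same spoke lengths, so $\ell(C(T'))=\ell$. But $T'$ is non-induced because of the third edge $xy_j$, contradicting Claim~\ref{induced}. One must check that $T'$ really is a tetrahedron: $x\notin V(T)$, since $x\in V(T)$ would force a chord of $T$, which Claim~\ref{induced} forbids.

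The remaining case is that the three neighbors are spread over different segments. By Claim~\ref{C(T)}, at most two of them lie on the rim $C$, so at least one lies in the interior of a spoke. For each such configuration we apply the Theta-graph Lemma~\ref{P1P2P3P4property} to the path $y_iy_j$ through $x$ of length two. Here the Theta graph is the union of the two facial cycles containing $y_i$ and $y_j$, and we look at the parity of the outer paths.

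If the outer paths are even, Lemma~\ref{P1P2P3P4property}(i) forces a spoke of length one, contradicting $\ell_a,\ell_b,\ell_c\ge 2$. If they are odd, Lemma~\ref{P1P2P3P4property}(ii) produces a new $(2k+1)$-cycle through $x$. Swapping it for a facial cycle, as in Lemma~\ref{4kIndOrDia}(i) and Claim~\ref{C(T)}, yields a tetrahedron in $\mathscr{T}_1$ with all spokes of length at least two and strictly longer total spoke length. Its rim is then strictly shorter than $\ell$, contradicting the minimality of $T$.

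The key use of $y_1\in S_2$ is that $y_1$ is at distance at least two from every one of $a,b,c,z$. This rules out the degenerate configurations of Claim~\ref{leq3}, where the neighbors clustered around $z$ or a branch vertex and only a $\Phi_{4k,3}$ could be built. It also guarantees that the replacement cycle does not collapse a spoke.

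The consequence then follows directly. If $x$ had three neighbors in $S_2$, it would have three neighbors in $V(T)$ including one in $S_2$, which the main part rules out.

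The main obstacle is the case analysis in the spread-out case. We have to verify that the lengths work out so that the replacement cycle is a facial cycle of a genuine tetrahedron. In particular we need the new spokes to be internally disjoint, and a length-one spoke must not reappear. Most care goes into the subcase where one neighbor is on a spoke and the others are on rim segments. There we would first try mirroring the parity computation of Claim~\ref{C(T)}, using $\ell(C_{ab}\oplus C_{ac})=4k+2-2\ell_a$.
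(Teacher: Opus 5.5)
Your first case (two neighbors of $x$ at distance two whose common neighbor $w$ is an inner vertex of a spoke or rim segment; substitute $x$ for $w$ and contradict Claim~\ref{induced}) is exactly the paper's opening step. The gap is everything after that. This is the real content of the claim, and you only sketch it.

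The mechanism you propose does not work as stated. Lemma~\ref{P1P2P3P4property} concerns an \emph{edge} $uv$ joining inner vertices of $Q_1$ and $Q_2$. If you join $y_i,y_j$ by the length-two path $y_ixy_j$ instead, the parities flip. Even outer paths now close into even cycles and force nothing, in particular no spoke of length one. The intersecting paths are then odd with lengths summing to $4k+2$, so both $(2k+1,2k+1)$ and $(2k-1,2k+3)$ remain possible.

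What is missing is the reduction that pins down the configuration and produces an edge to which the lemma applies. It runs as follows.
\begin{itemize}
\item Some two of the $y_i$ lie on a common facial cycle. By Claim~\ref{C(T)} one of them is off the rim, so it lies on two facial cycles; pigeonhole then does the rest. Your stated justification, that every vertex of $T$ lies on two facial cycles, is false for inner rim vertices.
\item If the common neighbor of these two on that cycle is outside $\{a,b,c,z\}$, you are in your first case.
\item Otherwise both lie in $S_1$. So the third one is the $S_2$-neighbor, and by the same argument it shares no facial cycle with either of them.
\item Up to symmetry this forces $y_1\in\mathbb{P}_{ab}-\{a,b\}$ and $y_2\in P_{bz}-\{b,z\}$, both adjacent to $b$, and $y_3\in\mathbb{P}_{ac}-\{a,c\}$.
\end{itemize}
Now put $x$ in place of $b$. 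The paths $a\mathbb{P}_{ab}y_1xy_2P_{bz}z$, $P_{az}$ and $C_{ac}[a,c,z]$ form a $(2k+1)$-Theta graph, and the lemma is applied to the edge $xy_3$. Only here does your dichotomy hold: even outer paths would force $\ell_a=1$.

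It is also not automatic that the swap gives a tetrahedron with all spokes of length at least two and a larger spoke sum. If the new $(2k+1)$-cycle is $xy_1C_{ab}[y_1,a,z]zC_{ac}[z,c,y_3]y_3x$, replacing $C_{ac}$ by it replaces $P_{az}$ with the longer spoke $C_{ab}[z,a,y_1]$, and you are done. But if it is $D=xy_2P_{bz}zP_{az}a\mathbb{P}_{ac}y_3x$, replacing $C_{ab}$ by $D$ \emph{shortens} the spoke at $b$ to $y_2P_{bz}z$, of length $\ell_b-1$. Here you need $y_3\in S_2$ twice:
\begin{itemize}
\item If $\ell_b=2$, then $\ell(C_{ac}[z,a,y_3])=2k-2$, while $\ell(C_{ac}[z,c,y_3])=\ell_c+\ell(y_3\mathbb{P}_{ac}c)\geq 4$. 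This contradicts $\ell(C_{ac})=2k+1$.
\item If $\ell_b\geq 3$, the new spoke sum is $(\ell_b-1)+(\ell_a+\ell(a\mathbb{P}_{ac}y_3))+\ell_c$. This exceeds the old sum only because $\ell(a\mathbb{P}_{ac}y_3)\geq 2$.
\end{itemize}
Finally, a minor point: membership in $S_2$ does not mean distance at least two from $a,b,c,z$. For instance, $z\in S_2$ always.
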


\begin{proof}
    Suppose not, and let $x$ be a vertex in $G$ with three neighbors $y_1,y_2,y_3$ in $T$, one of which lies in $S_2$.
By Claim~\ref{C(T)}, at least one of $y_1,y_2,y_3$ is not in the rim cycle $C$. Consequently, two of $y_1,y_2,y_3$ are contained in one of the facial cycles $C_{ab},C_{ac},C_{bc}$.
Without loss of generality, we may assume that $y_1,y_2$ lie in $C_{ab}$, and let $y$ be the common neighbor of $y_1,y_2$ in $C_{ab}$. If $y\notin \{z,a,b\}$, then replacing $y$ by $x$ in $T$ yields a new tetrahedron in $\mathscr{T}_1$  
with the same spokes as $T$. However, it is not an induced subgraph of $G$ because $xy_3\in E(G)$, contradicting Claim~\ref{induced}.
Hence $y\in \{z,a,b\}$, which implies $y_1,y_2\in S_1$ and hence $y_3\in S_2$. If 
 $y_3$ and some of $y_1,y_2$, say $y_1$, are in the same facial cycle of $T$, then similarly as previous proof,
 we can conclude that $y_1,y_3\in  S_1$,
contradicting the fact $y_3\in S_2$.
So,  $y_3$ is not in the same facial cycle as $y_1,y_2$, which  implies that $y_3\notin V(C_{ab})$ and $y\neq z$. Therefore,  $y\in\{a,b\}$.
Without loss of generality, we may assume that $y=b$ and  $y_1,y_2,y_3$ are on the paths $\mathbb{P}_{ab}-\{a,b\},P_{bz}-\{b,z\},\mathbb{P}_{ac}-\{a,c\}$, respectively.

Consider the paths
$$P_1=xy_1C[y_1,a,y_3]y_3,P_2=xy_2P_{bz}zC_{ac}[z,c,y_3]y_3,$$$$P_3=xy_1C_{ab}[y_1,a,z]zC_{ac}[z,c,y_3]y_3,\text{ and }P_4=xy_2P_{bz}zC_{ac}[z,a,y_3]y_3,$$
as shown in Figure~\ref{graphOfTspoke1}a.
Note that the three paths $a\mathbb{P}_{ab}y_1xy_2\mathbb{P}_{bz}z,$ $P_{az}$ and $C_{ac}[a,c,z]$
form a $(2k+1)$-Theta graph, say $\Theta$.
Clearly, $P_1,P_2$ are the two $(x,y_3)$-outer paths, and $P_3,P_4$  are the two $(x,y_3)$-intersecting paths in $\Theta$.  
Since $\ell_a\geq2$ and by Lemma~\ref{P1P2P3P4property}, we conclude that
 $P_3,P_4$ have even length, and
one of the cycles $D_1:=xP_3y_3x,D_2:=xP_4y_3x$ has length $2k+1$ and the other has length $2k+3$, as  shown in Figure~\ref{graphOfTspoke1}b.

Suppose first that  $D_1$ has length $2k+1$. Then replacing $C_{ac}$ by $D_1$ in $T$ yields a tetrahedron $T'\in\mathcal{T}_k$ with the center vertex  $z$ and  the branch vertices $b,c,y_1$. 
Note that the spokes $P_{bz},P_{cz}$ of length at least two in $T$ are also spokes in $T'$, and the other spoke $P_{az}$ in $T$ is replaced by a longer spoke  $C_{ab}[z,a,y_1]$ in $T'$. It follows that $\ell(C(T'))<\ell$. But this contradicts the choice of $T$.

\begin{figure}[h]
\centering
\begin{minipage}{0.45\textwidth}
\centering
\begin{tikzpicture}[scale=0.7]

\coordinate (b) at (0,0);
\coordinate (c) at (8,0);
\coordinate (a) at (4,3.3333);  
\coordinate (z) at (4,1.25); 
\coordinate (v_{2k}) at (6,0.625); 
\coordinate (v_{2k+1}) at (3.2,2.6667); 

\coordinate (z_2) at (1.1,0.9167);
\coordinate (u) at (1.5,1.25);
\coordinate (z_1) at (1.9,1.5833);
\coordinate (v) at (6,0.625);

\coordinate (y_1) at (1.5,1.25);
\coordinate (y_2) at (1.5,0.46875);
\coordinate (y_3) at (7,0.8333);

\coordinate (x) at (0.25,1.55);

\draw[black] (c) --(b);
\draw[black] (b) -- (y_1);
\draw[black] (b) -- (y_2);

\draw[] (x) arc (165:4:3.43);

\draw[brown,line width=1.5pt] (0.25,1.59) -- (1.5,1.29) -- (4,3.3733) -- (7,0.8733);

\draw[green,line width=1.5pt] (0.25,1.51) -- (1.5,0.42875) -- (4,1.21) -- (7.872,0);
\draw[green,line width=1.5pt] (8,0.04) -- (7,0.8733);

\draw[blue,line width=1.5pt] (0.4911, 1.4293) -- (1.5267, 1.1808);
\draw[blue,line width=1.5pt] (1.5267, 1.1658) -- (3.94, 3.1667) -- (3.94, 1.3583) -- (7.58, 0.2183) -- (5.9, 1.6254);

\draw[red,line width=1.5pt]  (0.4911, 1.4293) -- (1.5,0.5566);
\draw[red,line width=1.5pt] (1.5,0.5566)-- (4.06,1.358333) -- (4.06,3.166667) -- (6.92275,0.7695);

\foreach \point/\pos in {z/below} {
    \node[circle, fill=black, inner sep=1.5pt] at (\point) {};
    \node[\pos] at (\point) {$\point$};
}
\node[circle, fill=black, inner sep=1.5pt] at (z) {};

\node[circle, fill=black, inner sep=1.5pt] at (a) {};
\node[above] at (a) {$a$};
\node[circle, fill=black, inner sep=1.5pt] at (b) {};
\node[below] at (b) {$b$};
\node[circle, fill=black, inner sep=1.5pt] at (c) {};
\node[below] at (c) {$c$};

\node[circle, fill=black, inner sep=1.5pt] at (y_1) {};
\node[above,yshift=4pt] at (y_1) {$y_1$};
\node[circle, fill=black, inner sep=1.5pt] at (y_2) {};
\node[below right,xshift=3pt,yshift=4pt] at (y_2) {$y_2$};
\node[circle, fill=black, inner sep=1.5pt] at (y_3) {};
\node[above right,yshift=3pt] at (y_3) {$y_3$};

\node[circle, fill=black, inner sep=1.5pt] at (x) {};
\node[below,left, yshift=5pt] at (x) {$x$};

%
\node[] at (5,1.75) {\Large\textcolor{red}{$P_4$}};
\node[] at (3.4,0.5) {\Large\textcolor{green}{$P_2$}};
\node[] at (3.1,1.75) {\Large\textcolor{blue}{$P_3$}};
\node[] at (2.5,3) {\Large\textcolor{brown}{$P_1$}};
\node[below] at (4,-1) {(a)};

\end{tikzpicture}
\end{minipage}
\hfill
\begin{minipage}{0.45\textwidth}
\centering
\begin{tikzpicture}[scale=0.7]

\coordinate (b) at (0,0);
\coordinate (c) at (8,0);
\coordinate (a) at (4,3.3333);  
\coordinate (z) at (4,1.25); 
\coordinate (v_{2k}) at (6,0.625); 
\coordinate (v_{2k+1}) at (3.2,2.6667); 

\coordinate (z_2) at (1.1,0.9167);
\coordinate (u) at (1.5,1.25);
\coordinate (z_1) at (1.9,1.5833);
\coordinate (v) at (6,0.625);

\coordinate (y_1) at (1.5,1.25);
\coordinate (y_2) at (1.5,0.46875);
\coordinate (y_3) at (7,0.8333);

\coordinate (x) at (0.25,1.55);

\draw[black] (c) --(b);
\draw[black] (b) -- (y_1);
\draw[black] (b) -- (y_2);


\draw[blue,line width=1.5pt] (x) -- (y_1) -- (3.94, 3.2833) -- (3.94, 1.26875) -- (c) -- (y_3);

\draw[red,line width=1.5pt]  (x) -- (y_2) -- (4.06,1.26875) -- (4.06,3.2833) -- (y_3);

\draw[red,line width=1.2pt] (0.29,1.5393) arc (165:4:3.388);
\draw[blue,line width=1.2pt] (0.21,1.5607) arc (165:4:3.471);

\foreach \point/\pos in {z/below, a/above} {
    \node[circle, fill=black, inner sep=1.5pt] at (\point) {};
    \node[\pos] at (\point) {$\point$};
}
\node[circle, fill=black, inner sep=1.5pt] at (z) {};

\node[circle, fill=black, inner sep=1.5pt] at (b) {};
\node[below] at (b) {$b$};
\node[circle, fill=black, inner sep=1.5pt] at (c) {};
\node[below] at (c) {$c$};

\node[circle, fill=black, inner sep=1.5pt] at (y_1) {};
\node[above,yshift=4pt] at (y_1) {$y_1$};
\node[circle, fill=black, inner sep=1.5pt] at (y_2) {};
\node[below right,xshift=3pt,yshift=4pt] at (y_2) {$y_2$};
\node[circle, fill=black, inner sep=1.5pt] at (y_3) {};
\node[above right,yshift=3pt] at (y_3) {$y_3$};

\node[circle, fill=black, inner sep=1.5pt] at (x) {};
\node[below,left, yshift=5pt] at (x) {$x$};

%
\node[] at (5,1.75) {\Large\textcolor{red}{$D_2$}};
\node[] at (3.1,1.75) {\Large\textcolor{blue}{$D_1$}};
\node[below] at (4,-1) {(b)};

\end{tikzpicture}
\end{minipage}
\caption{$T\cup \{xy_1,xy_2,xy_3\}$}
\label{graphOfTspoke1}
\end{figure}

So, the cycle $D_1$ has length $2k+3$ and the cycle $D_2$ has length $2k+1$, which implies that $\ell(P_3)=2k+2$ and $\ell (P_4)=2k$.
If $ \ell (zP_{bz}y_2)=1$, i.e., $zy_2\in E(G)$, then $\ell(C_{ac}[z,a,y_3])=\ell(D_2)-\ell(zy_2xy_3)=2k-2$. 
Since $y_3\in S_2$, it follows that 
the sub-path $y_3\mathbb{P}_{ac}c$ of $\mathbb{P}_{ac}$
 has length at least two.
 Recall that the path $P_{cz}$ has length at least two.
Therefore $\ell(C_{ac}[z,c,y_3])=\ell(y_3\mathbb{P}_{ac}c)+\ell_c \geq 4$.
But then $\ell (C_{ac})=\ell(C_{ac}[z,a,y_3])+\ell( C_{ac}[z,c,y_3])\geq 2k-2+4=2k+2$, contradicting the fact that $C_{ac}$ has length $2k+1$.

So,
$ \ell (zP_{bz}y_2)\geq 2$. 
Then replacing $C_{ab}$ by $D_2$ in $T$ again yields
a tetrahedron $T''\in\mathcal{T}_k$ 
with the center vertex $z$ and the branch vertices $y_2,y_3,c$. 
Note that the spoke $P_{cz}$ of length two in $T$ is also a spoke of $T''$, and the other two spokes 
$zP_{bz}y_2,C_{ac}[z,a,y_3]$ of $T''$ have length at least two. 
Since $y_3\in S_2$, it follows that 
the sub-path $a\mathbb{P}_{ac}y_3$ of $\mathbb{P}_{ac}$
 has length at least two.
Moreover, since $y_2b$ is an edge, 
we have $\ell (zP_{bz}y_2)+\ell(C_{ac}[z,a,y_3])=(\ell_b-1)+(\ell_a+\ell(a\mathbb{P}_{ac}y_3))> \ell_a+\ell_b$, which implies  that $\ell(C(T''))>\ell$.
But this contradicts the choice of $T$ and therefore proves the claim. 
\end{proof}

For each  $i\in\{0,1,2\}$, let 
$V_i=\{u\in V(G):|N(u)\cap S_2|=i\}$.
By Claim~\ref{leq2}, $(V_0,V_1,V_2)$ is a partition of $V(G)$.
By the definition of $V_i$, we 
infer that $\sum_{v\in S_2}d(v)=|V_1|+2|V_2|$.
By Claims~\ref{leq3} and~\ref{leq2}, we have

\begin{align*}
\sum_{u\in V_0\cup V_1\cup V_2}|N(u)\cap V(H)|\leq 3|V_0|+2|V_1|+2|V_2| 
&=3(n-|V_1|-|V_2|)+2|V_1|+2|V_2|\\
&\leq3n-\frac{|V_1|+2|V_2|}{2}\\
&=3n-\frac{1}{2}\sum_{v\in S_2}d(v).
\end{align*}

Since $|S_2|=6k-1-t-|S_1|$ and $\delta (G)>\frac{4n}{6k-1}$ and  $t\leq 2k-1$, 
we arrive  that 
$$\sum_{u\in V_0\cup V_1\cup V_2}|N(u)\cap V(H)| <3n-(6k-1-t-|S_1|)\frac{2n}{6k-1}\leq 3n-(4k-|S_1|)\frac{2n}{6k-1}.$$

On the other hand, we have
\begin{align*}
\sum_{u\in V_0\cup V_1\cup V_2}|N(u)\cap V(T)|=
\sum_{v\in V(T)}d(v)>(6k-1-t )\frac{4n}{6k-1}
\geq 4k\frac{4n}{6k-1}.
\end{align*}

Combining the  above two inequalities, we conclude that $6k<2|S_1|-3$,  but this contradicts the fact 
that if $k=3$ then $|S_1|\leq 9$ and if $k\geq 4$ then
$|S_1|\leq 12$. This completes the proof of Lemma~\ref{Tspoke1}.
    \end{proof}

     \begin{proof}[Proof of Lemma~\ref{T1spokeisM4k}]
     For convenience, we abbreviate   $T_{\ell_a,\ell_b,\ell_c}$ to $T$.
       Since two odd cycles $C_{ab},C_{ac}$ share
       the edge $az$, it follows that $C_{ab}\cup C_{ac}$ is a copy of $\Phi_{4k,1}$ with rim cycle $C_{ab}\oplus C_{ac}$ and diagonal chord $az$. Let $C:=C_{ab}\oplus C_{ac}=v_0\ldots v_{4k-1}v_0$ with $a=v_0,z=v_{2k},b=v_s, c=v_t$ where $1\leq s\leq 2k-1, 2k+1\leq t \leq 4k-1$.
       By Corollary~\ref{phi4k1isM4kcor}, it suffices to show that $C_{ab}\cup C_{ac}$
       has diagonal property. Suppose not, and let $x$ be a vertex in $G$ such that $x$ has at least three neighbors $y_1,y_2,y_3$ in $C$, but $\{y_1,y_2,y_3\}$
       is neither a diagonal triple of $C$ nor a special triple of $C_{ab}\cup C_{ac}$. Owing to the odd girth assumption, 
     it follows that one of $C_{ab},C_{ac}$, say $C_{ab}$, contains exactly two of $y_1,y_2,y_3$, say $y_1,y_2$, and $dist_{C_{ab}}(y_1,y_2)=2$.
If $\{y_1,y_2\}\cap \{v_0,v_{2k}\}\neq \emptyset$, then we may assume that $y_1=v_0$ without loss of generality. Since $dist_{C_{ab}}(y_1,y_2)=2$, we have $y_2\in\{v_2,v_{2k-1}\}$.
Similarly, since $dist_{C_{ac}}(y_1,y_3)=2$,
we have $y_3\in\{v_{2k+1},v_{4k-2}\}$. However, if $y_2=v_2$ and $y_3=v_{4k-2}$ then $\{y_1,y_2,y_3\}$ is a special triple of  $C_{ab}\cup C_{ac}$; otherwise we can conclude that 
$\{y_1,y_2,y_3\}$ is  a diagonal triple of $C$.  Both cases lead to a contradiction.

Hence $\{y_1,y_2\}\cap \{v_0,v_{2k}\}=\emptyset$.
Note that $y_1,y_2$ are on the path $v_1v_2\ldots v_{2k-1}$, and $y_3$ is on the path $v_{2k+1}v_{2k+2}\ldots v_{4k-1}$. Let $y$ be the common neighbor of $y_1,y_2$
in $C_{ab}$. Suppose that $y\neq b$
shown in Figure~\ref{graphOfT1-M4kCase1}a.
Then replacing $y$ by $x$ in $T$ yields a new tetrahedron $T_{\ell_a',\ell_b',\ell_c'}\in\mathcal{T}_k$ with the same spokes as $T$. Clearly, $\ell_a'=\ell_a=1,\ell_b'=\ell_b\geq2,\ell_c'=\ell_c\geq2$ and hence $T_{\ell_a',\ell_b',\ell_c'}\in\mathscr{T}_1$. Let $C'_{ab}$ denote the odd cycle 
 obtained from  $C_{ab}$ by replacing $y$ with $x$. Since $x\in V(C'_{ab}),y_3\in V(C_{ac})$ and $xy_3\in E(G)$ but $xy_3\notin E(T_{\ell_a',\ell_b',\ell_c'})$,
by Lemma~\ref{4kIndOrDia}, either there exists another tetrahedron   $T'':=T_{\ell_a'',\ell_b'',\ell_c''}\in \mathscr{T}_1$ such that $\ell_a''>\ell_a'=1,\ell_b''=\ell_b',\ell_c''=\ell_c'$,
or $xy_3$ is a diagonal chord of $C_{ab}'\oplus C_{ac}$. If the first case holds, then all spokes of $T''$ have length at least two, contradicting Lemma~\ref{Tspoke1}. If
the latter case holds, then this contradicts our assumption that $\{y_1,y_2,y_3\}$ is not a diagonal triple of $C$. 

\begin{figure}[ht]
\centering
\begin{minipage}{0.45\textwidth}
\centering
\begin{tikzpicture}[scale=0.7]

\coordinate (b) at (0,0);
\coordinate (c) at (8,0);
\coordinate (a) at (4,3.3333);  
\coordinate (z) at (4,1.25); 
\coordinate (v_{2k}) at (6,0.625); 
\coordinate (v_{2k+1}) at (3.2,2.6667); 

\coordinate (z_2) at (1.1,0.9167);
\coordinate (u) at (1.5,1.25);
\coordinate (z_1) at (1.9,1.5833);
\coordinate (v) at (6,0.625);

\coordinate (y_1) at (1,0.8333);
\coordinate (y_2) at (2,1.75);
\coordinate (y) at (1.5, 1.29165);
\coordinate (y_3) at (6,0.625);

\coordinate (x) at (3,1.55);

\draw[black, thick] (z) -- (a);
\draw[black, thick] (z) -- (b);
\draw[black, thick] (z) -- (c);
\draw[black, thick] (y_2) -- (a) -- (c) --(b) -- (y_1);
\draw[black, thick] (y_2) -- (y_1);


\foreach \point/\pos in {z/below, a/above} {
    \node[circle, fill=black, inner sep=1.5pt] at (\point) {};
    \node[\pos] at (\point) {$\point$};
}
\node[circle, fill=black, inner sep=1.5pt] at (z) {};

\node[circle, fill=black, inner sep=1.5pt] at (y) {};
\node[above left] at (y) {$y$};
\node[circle, fill=black, inner sep=1.5pt] at (b) {};
\node[below] at (b) {$b$};
\node[circle, fill=black, inner sep=1.5pt] at (c) {};
\node[below] at (c) {$c$};

\node[circle, fill=black, inner sep=1.5pt] at (y_1) {};
\node[left,yshift=4pt] at (y_1) {$y_1$};
\node[circle, fill=black, inner sep=1.5pt] at (y_2) {};
\node[above,xshift=-3pt] at (y_2) {$y_2$};
\node[circle, fill=black, inner sep=1.5pt] at (y_3) {};
\node[below left,yshift=3pt] at (y_3) {$y_3$};

\node[circle, fill=black, inner sep=1.5pt] at (x) {};
\node[above] at (x) {$x$};

\draw[black, thick] (x) -- (y_1);
\draw[black, thick] (x) -- (y_2);
\draw[black, thick] (x) arc (115:29:2.365);

\node[below] at (4,-1) {(a)};

\end{tikzpicture}
\end{minipage}
\hfill
\begin{minipage}{0.45\textwidth}
\centering
\begin{tikzpicture}[scale=0.7]

\coordinate (b) at (0,0);
\coordinate (c) at (8,0);
\coordinate (a) at (4,3.3333);  
\coordinate (z) at (4,1.25); 
\coordinate (v_{2k}) at (6,0.625); 
\coordinate (v_{2k+1}) at (3.2,2.6667); 

\coordinate (z_2) at (1.1,0.9167);
\coordinate (u) at (1.5,1.25);
\coordinate (z_1) at (1.9,1.5833);
\coordinate (v) at (6,0.625);

\coordinate (y_1) at (1,0.8333);
\coordinate (y_2) at (1.5,0.46875);
\coordinate (y_3) at (6,0.625);

\coordinate (x) at (3,1.55);

\draw[black, thick] (z) -- (a);
\draw[black, thick] (z) -- (y_2);
\draw[black, thick] (z) -- (c);
\draw[black, thick] (y_1) -- (a) -- (c) --(b);
\draw[black, thick] (b) -- (y_1);
\draw[black, thick] (b) -- (y_2);


\draw[black, thick] (x) -- (y_2);
\draw[red, line width=1.5pt] (x) arc (115:29:2.365);

\draw[red,line width=1.5pt] (y_1) -- (a) -- (z) --(b);
\draw[red,line width=1.5pt] (b) -- (c) -- (y_3);
\draw[red,line width=1.5pt] (x) -- (y_1);

\foreach \point/\pos in {z/below, a/above} {
    \node[circle, fill=black, inner sep=1.5pt] at (\point) {};
    \node[\pos] at (\point) {$\point$};
}
\node[circle, fill=black, inner sep=1.5pt] at (z) {};

\node[circle, fill=black, inner sep=1.5pt] at (b) {};
\node[below] at (b) {$y=b$};
\node[circle, fill=black, inner sep=1.5pt] at (c) {};
\node[below] at (c) {$c$};

\node[circle, fill=black, inner sep=1.5pt] at (y_1) {};
\node[left,yshift=4pt] at (y_1) {$y_1$};
\node[circle, fill=black, inner sep=1.5pt] at (y_2) {};
\node[right,xshift=3pt,yshift=-4pt] at (y_2) {$y_2$};
\node[circle, fill=black, inner sep=1.5pt] at (y_3) {};
\node[below left,yshift=3pt] at (y_3) {$y_3$};

\node[circle, fill=black, inner sep=1.5pt] at (x) {};
\node[below,left, yshift=5pt] at (x) {$x$};

%
\node[] at (2,2.9833) {\Large\textcolor{red}{$C'$}};
\node[below] at (4,-1) {(b)};

\end{tikzpicture}
\end{minipage}
\caption{$T\cup\{xy_1,xy_2,xy_3\}$}
\label{graphOfT1-M4kCase1}
\end{figure}

So, we have $y=b$.
Without loss of generality, we may assume that $y_1$ and $y_2$ are on the paths $\mathbb{P}_{ab}-\{a,b\}$
and $P_{bz}-\{b,z\}$, respectively.
Thus, $y_3$ is on the path $C_{ac}[z,c,a]$.
We distinguish  the following two cases.

	\vspace{3mm}
		{\bf \noindent Case 1. } 
        $y_3$ is on the path $P_{cz}-\{z\}$.
        
Since $y_2,y_3$ are in the cycle $C_{bc}$,
 it follows that $dist_{C_{bc}}(y_2,y_3)=2$, which implies that either $y_2,y_3\in N(z)$, or $y_3=c$ and $\ell(\mathbb{P}_{bc})=1$.
If $y_3=c$ and $\mathbb{P}_{bc}$ is an edge, 
then
$C_{bc}[b,z,c]$ has length $2k$, which implies that $\{y_1,y_2,y_3\}$ is a diagonal triple of $C$, a contradiction.

So, $y_2,y_3\in N(z)$. 
Let $C':=xy_1C_{ab}[y_1,z,b]bC_{bc}[b,c,y_3]y_3x$ be a cycle of $G$, as shown in Figure~\ref{graphOfT1-M4kCase1}b.
Clearly, $C'$ has length $2k+(2k+1-3)+2=4k$. Note that 
$y_1b,xy_2,zy_3$ are three diagonal chords in $G[V(C')]$, which together with the cycle $C'$ yield a copy of $\Phi_{4k,3}$.
So, by Lemma~\ref{phi4k3ism4k}, $G[V(C')]$ is an induced copy of $M_{4k}$.
So,  $v_{2k+2}a$ is also a diagonal chord of $G[V(C')]$ because $v_{2k+2}$ is the neighbor of $y_3$ on the path $P_{cz}-\{z\}$. But now $v_{2k+2}a$ is a chord of $C_{ac}$, which yields an odd cycle of length at most $2k-1$, a contradiction. 

	\vspace{3mm}
		{\bf \noindent Case 2. } 
        $y_3$ is on the path $\mathbb{P}_{ac}-\{a,c\}$.

Consider the paths
$$P_1=xy_1C[y_1,a,y_3]y_3,P_2=xy_2C[y_2,z,y_3]y_3,$$$$P_3=xy_1C_{ab}[y_1,a,z]zC_{ac}[z,c,y_3]y_3,\text{ and }P_4=xy_2P_{bz}zC_{ac}[z,a,y_3]y_3,$$
as shown in Figure~\ref{graphOfT1-M4kCase2}a.
Note that the three paths $a\mathbb{P}_{ab}y_1xy_2P_{bz}z,$ $az$ and $C_{ac}[a,c,z]$
form a $(2k+1)$-Theta graph, say $\Theta$. Clearly, $P_1,P_2$ are the two $(x,y_3)$-outer paths and
$P_3,P_4$ are the two $(x,y_3)$-intersecting paths in $\Theta$. By Lemma~\ref{P1P2P3P4property}, we have that either cycles $xP_1y_3x,xP_2y_3x$ both have length $2k+1$, or exactly one of the cycles $xP_3y_3x,xP_4y_3x$ has length $2k+1$.  If the first case holds, then $\{y_1,y_2,y_3\}$ is a diagonal triple of $C$, a contradiction. 
So, one of the cycles $xP_3y_3x,xP_4y_3x$ has length $2k+1$, and the other has length $2k+3$.
Suppose first that the cycle $xP_3y_3x$ has length $2k+1$. Then replacing $C_{ac}$ by $xP_3y_3x$ in $T$ yields a tetrahedron $T_1\in\mathscr{T}_1$ with the center vertex $z$ and the branch vertices $b,c,y_1$.
Note that the spokes $P_{bz},P_{cz}$ of length at least two in $T$ are also spokes in $T_1$, and the spoke $az$ of $T$ is replaced by the longer spoke $C_{ab}[z,a,y_1]$ of $T_1$. It follows that all spokes of $T_1$ have length at least two. But this contradicts Lemma~\ref{Tspoke1}.

\begin{figure}[h]
\centering
\begin{minipage}{0.45\textwidth}
\centering
\begin{tikzpicture}[scale=0.7]

\coordinate (b) at (0,0);
\coordinate (c) at (8,0);
\coordinate (a) at (4,3.3333);  
\coordinate (z) at (4,1.25); 
\coordinate (v_{2k}) at (6,0.625); 
\coordinate (v_{2k+1}) at (3.2,2.6667); 

\coordinate (z_2) at (1.1,0.9167);
\coordinate (u) at (1.5,1.25);
\coordinate (z_1) at (1.9,1.5833);
\coordinate (v) at (6,0.625);

\coordinate (y_1) at (1.5,1.25);
\coordinate (y_2) at (1.5,0.46875);
\coordinate (y_3) at (7,0.8333);

\coordinate (x) at (0.25,1.55);

\draw[black, thick] (c) --(b);
\draw[black, thick] (b) -- (y_1);
\draw[black, thick] (b) -- (y_2);

\draw[] (x) arc (165:4:3.43);

\draw[brown,line width=1.5pt] (0.25,1.59) -- (1.5,1.29) -- (4,3.3733) -- (7,0.8733);

\draw[green,line width=1.5pt] (0.25,1.51) -- (1.5,0.42875) -- (4,1.21) -- ((7.872,0);
\draw[green,line width=1.5pt] (8,0.04) -- (7,0.8733);

\draw[blue,line width=1.5pt] (0.4911, 1.4293) -- (1.5267, 1.1808);
\draw[blue,line width=1.5pt] (1.5267, 1.1658) -- (3.94, 3.1667) -- (3.94, 1.3583) -- (7.58, 0.2183) -- (5.9, 1.6254);

\draw[red,line width=1.5pt]  (0.4911, 1.4293) -- (1.5,0.5566);
\draw[red,line width=1.5pt] (1.5,0.5566)-- (4.06,1.358333) -- (4.06,3.166667) -- (6.92275,0.7695);

\foreach \point/\pos in {z/below, a/above} {
    \node[circle, fill=black, inner sep=1.5pt] at (\point) {};
    \node[\pos] at (\point) {$\point$};
}
\node[circle, fill=black, inner sep=1.5pt] at (z) {};

\node[circle, fill=black, inner sep=1.5pt] at (b) {};
\node[below] at (b) {$b$};
\node[circle, fill=black, inner sep=1.5pt] at (c) {};
\node[below] at (c) {$c$};

\node[circle, fill=black, inner sep=1.5pt] at (y_1) {};
\node[above,yshift=4pt] at (y_1) {$y_1$};
\node[circle, fill=black, inner sep=1.5pt] at (y_2) {};
\node[below right,xshift=3pt,yshift=4pt] at (y_2) {$y_2$};
\node[circle, fill=black, inner sep=1.5pt] at (y_3) {};
\node[above right,yshift=3pt] at (y_3) {$y_3$};

\node[circle, fill=black, inner sep=1.5pt] at (x) {};
\node[below,left, yshift=5pt] at (x) {$x$};

%
\node[] at (5,1.75) {\Large\textcolor{red}{$P_4$}};
\node[] at (3.4,0.5) {\Large\textcolor{green}{$P_2$}};
\node[] at (3.1,1.75) {\Large\textcolor{blue}{$P_3$}};
\node[] at (2.5,3) {\Large\textcolor{brown}{$P_1$}};
\node[below] at (4,-1) {(a)};

\end{tikzpicture}
\end{minipage}
\hfill
\begin{minipage}{0.45\textwidth}
\centering
\begin{tikzpicture}[scale=0.7]

\coordinate (b) at (0,0);
\coordinate (c) at (8,0);
\coordinate (a) at (4,3.3333);  
\coordinate (z) at (4,1.25); 
\coordinate (v_{2k}) at (6,0.625); 
\coordinate (v_{2k+1}) at (3.2,2.6667); 

\coordinate (z_2) at (1.1,0.9167);
\coordinate (u) at (1.5,1.25);
\coordinate (z_1) at (1.9,1.5833);
\coordinate (v) at (6,0.625);

\coordinate (y_1) at (1.5,1.25);
\coordinate (y_2) at (1.5,0.46875);
\coordinate (y_3) at (7,0.8333);

\coordinate (x) at (0.25,1.55);
\draw[white] (0.75,0.625) arc (200:0:2.71);
\draw[black] (z) -- (a);
\draw[black] (z) -- (y_2);
\draw[black] (z) -- (c);
\draw[black] (y_1) -- (a) -- (c) --(b);
\draw[black] (b) -- (y_1);
\draw[black] (b) -- (y_2);

\draw[red,line width=2pt] (y_3) -- (a) -- (z) -- (y_2) -- (x);

\draw[black] (x) -- (y_1);
\draw[red,line width=2pt] (x) arc (165:4:3.43);

\foreach \point/\pos in {z/below, a/above} {
    \node[circle, fill=black, inner sep=1.5pt] at (\point) {};
    \node[\pos] at (\point) {$\point$};
}
\node[circle, fill=black, inner sep=1.5pt] at (z) {};

\node[circle, fill=black, inner sep=1.5pt] at (b) {};
\node[below] at (b) {$b$};
\node[circle, fill=black, inner sep=1.5pt] at (c) {};
\node[below] at (c) {$c$};

\node[circle, fill=black, inner sep=1.5pt] at (y_1) {};
\node[above,yshift=4pt] at (y_1) {$y_1$};
\node[circle, fill=black, inner sep=1.5pt] at (y_2) {};
\node[right,xshift=3pt,yshift=-4pt] at (y_2) {$y_2$};
\node[circle, fill=black, inner sep=1.5pt] at (y_3) {};
\node[above right,yshift=3pt] at (y_3) {$y_3$};

\node[circle, fill=black, inner sep=1.5pt] at (x) {};
\node[below,left, yshift=5pt] at (x) {$x$};

%
\node[] at (7,3.4833) {\Large\textbf{\textcolor{red}{$D$}}};
\node[below] at (4,-1) {(b)};

\end{tikzpicture}
\end{minipage}
\caption{$T\cup \{xy_1,xy_2,xy_3\}$}
\label{graphOfT1-M4kCase2}
\end{figure}

Now suppose that the cycle  $xP_4y_3x$, denoted by $D$, has length $2k+1$, as shown in Figure~\ref{graphOfT1-M4kCase2}b.
Then replacing $C_{ab}$ by $D$ in $T$ yields a tetrahedron 
$T_2\in \mathcal{T}_k$
with center vertex $z$ and branch vertices $y_2,y_3,c$. Note that the spoke $P_{cz}$ of length at least two in $T$ is also a spoke of $T_2$, and another spoke $C_{ac}[z,a,y_3]$ of $T_2$ has length at least two.
 By Lemma~\ref{Tspoke1},
 the other spoke $zP_{bz}y_2$ of $T_2$
has length exactly one, i.e., $zy_2\in E(G)$ and 
$P_{bz}=by_2z$.
Since $\ell (D)=2k+1$, we have $\ell(C_{ac}[z,a,y_3])=\ell(D)-\ell (y_3xy_2z)=2k-2$, which implies that $\ell (C_{ac}[z,c,y_3])=\ell(C_{ac})-\ell (C_{ac}[z,a,y_3])=3$. 
Thus, $P_{cz}$ is a path of length two and $y_3c\in E(G)$ because  $\ell_c\geq2$ and $y_3$ is on the path $\mathbb{P}_{ac}-\{a,c\}$.  This implies that the spokes in  have length  1,1,2, respectively. 
Also notice that that the spokes in $T_2$ have lengths $1,2,2k-2$, respectively.
Note that $D$ and $C_{bc}$
share a common edge $zy_2$,
and the facial cycles of $T_2$ are $D,C_{bc},C_{ac}$. If \(D \cup C_{bc}\) is  a diagonal \(\Phi_{4k,1}\), then by Corollary~\ref{phi4k1isM4kcor}, then  $G[V(D)\cup V(C_{bc})]$ is an induced copy of $M_{4k}$, which implies that $ab$ is a diagonal chord in this $M_{4k}$. But then $ab$ is chord in the odd cycle $C_{ab}$, which yields an odd cycle of length at most $2k-1$, a contradiction.
So, \(D \cup C_{bc}\) is not a diagonal \(\Phi_{4k,1}\), then, by an argument similar to that for \(T\), we can conclude that the spokes of \(T_2\) have lengths \(1, 2, 2\), respectively.
Thus, $2k-2=2$, which implies that $k=2$.
Consider the cycle $C'':=xy_3cby_1azy_2x$ of length 8. Note that $xy_1,ay_3,by_2$ are three diagonal chords of $C''$, thus together with the rim cycle $C''$, it yields a copy of $\Phi_{8,3}$. By Lemma~\ref{phi4k3ism4k}, $G[V(C'')]$ is an induced copy of $M_8$, which implies that $zc$ is a diagonal chord in this $M_8$. But now $cP_{cz}zc$ is a triangle, a final contradiction.
This completes the proof of Lemma~\ref{T1spokeisM4k}.
    \end{proof}

	\section{Proof of Theorem \ref{MainResult}}
	

Let  $\Psi_1$  be a graph with $V(\Psi_1)=\{a_i:i=0,1,\ldots,7\},E(\Psi_1)=\{a_ia_{i+1}:i=0,1,\ldots,6\}\cup\{a_0a_3,a_3a_6,a_1a_4,a_4a_7,a_0a_5\}$, and let  $\Psi_2$ be a graph with 
$V(\Psi_2)=V(\Psi_1),E(\Psi_2)=E(\Psi_1)\cup\{a_2a_7\}$, as shown in Figure~\ref{graphOf8vIntro}a.


    \begin{figure}[h]
    \centering

    \begin{subfigure}{0.45\textwidth}
    \centering
    \begin{tikzpicture}[scale=0.8]
    \coordinate (a_5) at (0,0);
    \coordinate (a_0) at (3,0);
    \coordinate (a_1) at (4.5,1.732);  
    \coordinate (a_2) at (3,3.464);  
    \coordinate (a_3) at (0,3.464);
    \coordinate (a_6) at (-1.5,1.732);
    
    \draw[thick] (a_5) -- (a_0) -- (a_1) -- (a_2) -- (a_3) -- (a_6) -- cycle;
    \draw[thick] (a_1) -- (a_6);
    
    \foreach \point/\pos in {a_5/below left, a_0/below right, a_1/right, a_2/above right, a_3/above left, a_6/left} {
        \node[circle, fill=black, inner sep=1.5pt] at (\point) {};
        \node[\pos] at (\point) {$\point$};
    }
    
    \coordinate (a_7) at (0,1.732);
    \coordinate (a_4) at (0.8,1.732);
    
    \foreach \point in {a_4,a_7} {
        \node[circle, draw=black, fill=black, inner sep=1.5pt, minimum size=4pt] at (\point) {};
    }
    \node[below right] at (a_4) {$a_4$};
    \node[below] at (a_7) {$a_7$};
    \draw[thick] (a_3) -- (a_4);
    \draw[thick] (a_5) -- (a_4);
    \draw[dashed] (a_2) -- (a_7);
\draw[thick] (a_3) -- (a_0);

    \node[below] at (1.5,-1) {(a): $\Psi_1$ (without edge $a_2a_7$) and $\Psi_2$};
    \end{tikzpicture}
\end{subfigure}
\hfill
\begin{subfigure}{0.45\textwidth}
    \centering
    \begin{tikzpicture}[scale=0.8]
    \coordinate (a_5) at (0,0);
    \coordinate (a_0) at (3,0);
    \coordinate (a_1) at (4.5,1.732);  
    \coordinate (a_2) at (3,3.464);  
    \coordinate (a_3) at (0,3.464);
    \coordinate (a_6) at (-1.5,1.732);
    \coordinate (a_7) at (0,1.732);
    \coordinate (a_4) at (0.8,1.732);
    
    \draw[thick] (a_5) -- (a_0) -- (a_1) -- (a_2) -- (a_3) -- (a_6) -- cycle;
    \draw[thick] (a_1) -- (a_6);
    
    \draw[thick] (a_3) -- (a_4);
    \draw[thick] (a_3) -- (a_0);
    \draw[thick] (a_5) -- (a_4);
    \draw[dashed] (a_2) -- (a_7);

    \draw[purple,line width=2pt](a_6)arc(180:1:3);
    \draw[blue,line width=2pt](a_2)arc(9:-90:3);
    \draw[red,line width=2pt](a_7)arc(90:270:1.2);
    \coordinate (w1) at (0,-0.668);
    \coordinate (w2) at (2.332,-0.668);
    \draw[red,line width=2pt] (w1) -- (w2);
    \draw[red,line width=2pt](a_0)arc(0:-90:0.668);

    \foreach \point/\pos in {a_0/below right, a_1/right, a_2/above left, a_6/left} {
        \node[circle, fill=black, inner sep=1.5pt] at (\point) {};
        \node[\pos] at (\point) {$\point$};
    }
    \node[circle, fill=black, inner sep=1.5pt] at (a_3) {};
    \node[below,xshift=-2pt,yshift=-6pt] at (a_3) {$a_3$};
    \node[above,xshift=-2pt,yshift=6pt] at (a_5) {$a_5$};

    \foreach \point in {a_4,a_7,a_5} {
        \node[circle, draw=black, fill=black, inner sep=1.5pt, minimum size=4pt] at (\point) {};
    }
    \node[below right] at (a_4) {$a_4$};
    \node[below] at (a_7) {$a_7$};

    \node[red] at (-2,0) {$P_{a_0a_7}$};
    \node[purple] at (-2,3.5) {$P_{a_1a_6}$};
    \node[blue] at (3.2,1.1) {$P_{a_2a_5}$};

    \node[below] at (1.5,-1) {(b): $\Psi$};
    \end{tikzpicture}
\end{subfigure}
\caption{Graphs $\Psi_i,\Psi$}
\label{graphOf8vIntro}
\end{figure}
    

    \begin{lem}\label{8vGraphNotExist}
        If $G\in\mathcal{G}_{n,k}$, then $G$ contains no induced copy of $\Psi_1$ or $\Psi_2$.
    \end{lem}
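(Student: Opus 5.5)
The plan is to thicken $\Psi_i$ with paths supplied by maximality until the resulting subgraph $\Psi$ has exactly $6k-1$ vertices, and then run the same degree-counting argument as in Lemmas~\ref{phi4k1isM4k} and~\ref{Tspoke1}. Suppose $G$ contains an induced copy of $\Psi_1$ or $\Psi_2$. The pairs $(a_0,a_7)$, $(a_1,a_6)$ and $(a_2,a_5)$ are non-adjacent, and each is joined inside $\Psi_1$ by a path of length three:
\[
a_0a_3a_6a_7,\qquad a_1a_4a_3a_6,\qquad a_2a_3a_4a_5 .
\]
By maximality of $G$, each pair is also joined by a path of even length at most $2k-2$. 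Exactly as in the proof of Lemma~\ref{C61hasphi4k3}, this forces the three paths $P_{a_0a_7}$, $P_{a_1a_6}$, $P_{a_2a_5}$ to have length exactly $2k-2$. It also forces their inner vertices to avoid $V(\Psi_1)$: otherwise an odd closed walk of length at most $2k-1$ appears, using one of the several length-three paths between the endpoints.

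Next I will show that the three paths are pairwise vertex-disjoint. For this I will use the walk-splitting argument of Lemma~\ref{C61hasphi4k3}. Take a common vertex $v$ and split the two paths at $v$ into four walks whose lengths sum to $2(2k-2)$ in two different pairings. Then one of these walks, combined with a suitable short path of $\Psi_1$ between its endpoints (of length $1$, $2$ or $3$ and of the right parity), closes an odd walk of length at most $2k-1$. The resulting graph $\Psi$ (Figure~\ref{graphOf8vIntro}b) has $8+3(2k-3)=6k-1$ vertices. Hence
\[
\sum_{x\in V(G)}|N(x)\cap V(\Psi)|=\sum_{v\in V(\Psi)}d(v)>(6k-1)\cdot\frac{4n}{6k-1}=4n,
\]
so some vertex $x$ has at least five neighbours in $\Psi$. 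It remains to show that no vertex has five neighbours in $\Psi$, which gives the contradiction.

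For this bound I will cover $V(\Psi)$ by odd cycles of length $2k+1$, for instance
\[
a_0P_{a_0a_7}a_7a_4a_5a_0,\qquad a_1P_{a_1a_6}a_6a_3a_0a_1,\qquad a_2P_{a_2a_5}a_5a_0a_3a_2 ,
\]
together with the analogous ones through the other chords. By the odd girth, $x$ has at most two neighbours on each such cycle, and those two are at distance two on it. A vertex with five neighbours must therefore have neighbours spread over at least three of the long paths and the core. I will then split into cases according to which paths contain the neighbours. In each case I either find an odd cycle of length at most $2k-1$ directly, or I recognise a $(2k+1)$-Theta graph to which Lemma~\ref{P1P2P3P4property} applies: it forces the length-one conclusion or a cycle of length $2k+1$ with a chord, both impossible. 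In the remaining cases I exhibit a tetrahedron in $\mathscr{T}_1$ or a copy of $\Phi_{4k,3}$. Corollary~\ref{T} and Lemma~\ref{phi4k3ism4k} then produce an induced $M_{4k}$ whose forced diagonal chord is a chord of one of the $(2k+1)$-cycles above, giving a short odd cycle.

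The main obstacle is this last step, bounding $|N(x)\cap V(\Psi)|\le 4$: the case analysis is large, and $\Psi_1$ and $\Psi_2$ differ only by the edge $a_2a_7$. If the counting turns out not to be tight enough, my fallback is to sharpen it as in Lemma~\ref{phi4k1isM4k}. There I would partition $V(G)$ according to the number of neighbours in a well-chosen subpath (say the inner part of $P_{a_1a_6}$), prove bounds of the form $4,3,2$ for the three classes, and compare the resulting sum with $\sum_v d(v)$.
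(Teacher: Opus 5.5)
\textbf{Where the proposal matches the paper.} Your outline follows the paper up to the last step:
\begin{itemize}
\item paths of length exactly $2k-2$ from maximality, with inner vertices avoiding $H$;
\item pairwise disjointness by splitting at a common vertex (the paper uses Lemma~\ref{C61hasphi4k3} for two pairs, and for $P_{a_0a_7},P_{a_2a_5}$ pairs $a_0a_5$ with $a_2a_3a_4a_7$, or $a_5a_6a_7$ with $a_2a_1a_0$);
\item $|V(\Psi)|=6k-1$, and the averaging that reduces everything to ``no vertex has five neighbours in $\Psi$''.
\end{itemize}
There is one small slip. In $\Psi_1$, $a_7$ lies on no $(a_2,a_5)$-path of length three, and $a_2$ on no $(a_0,a_7)$-path of length three. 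So ``exactly as in Lemma~\ref{C61hasphi4k3}'' does not cover these two vertices. They need a separate parity argument, e.g.\ via $a_5a_6a_7$ and $a_2a_3a_6a_7$ when $a_7\in P_{a_2a_5}$.

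\textbf{The genuine gap.} The final step carries all the weight, and you only announce it as a case analysis. Your covering idea cannot do it alone: since $6k-1>2(2k+1)$, any cover of $V(\Psi)$ by $(2k+1)$-cycles needs three cycles and gives at best the bound $6$. You need a substructure on which a vertex has at most \emph{three} neighbours.

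\textbf{The missing idea.} $\Psi$ already contains two copies of $\Phi_{4k,3}$:
\begin{itemize}
\item the $4k$-cycle $C=a_0P_{a_0a_7}a_7a_4a_1P_{a_1a_6}a_6a_3a_0$, with the consecutive diagonals $a_0a_1,a_3a_4,a_6a_7$;
\item the $4k$-cycle $C'=a_2P_{a_2a_5}a_5a_4a_1P_{a_1a_6}a_6a_3a_2$, with $a_1a_2,a_3a_4,a_5a_6$.
\end{itemize}
By Lemma~\ref{phi4k3ism4k} each induces $M_{4k}$. Hence any vertex $x$ has at most three neighbours in $V(C)$ and at most three in $V(C')$, and in each case they form a diagonal triple.

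\textbf{How it finishes.} Write $P_{a_0a_7}=v_0\cdots v_{2k-2}$, $P_{a_1a_6}=u_0\cdots u_{2k-2}$ and $P_{a_2a_5}=w_0\cdots w_{2k-2}$, starting at $a_0,a_1,a_2$ respectively.
\begin{itemize}
\item Since $V(\Psi)=V(C)\cup V(P_{a_2a_5})$ and $P_{a_2a_5}$ lies on a $(2k+1)$-cycle, five neighbours force three in $C$ and two on $P_{a_2a_5}$. The latter are $w_{j-1},w_{j+1}$.
\item Let $D=a_1P_{a_1a_6}a_6a_3a_4a_1$. Then $V(C)=V(D)\cup V(P_{a_0a_7})$ and $V(C')=V(D)\cup V(P_{a_2a_5})$. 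Together these force exactly one neighbour on $D$, namely $u_j$, the diagonal partner of $w_j$ in $C'$.
\item The other two neighbours are therefore $v_{j-1},v_{j+1}$ on $P_{a_0a_7}$.
\item The path $v_{j-1}P_{a_0a_7}a_0a_5P_{a_2a_5}w_{j+1}$ has length $2k-3$, and together with $x$ it closes an odd cycle of length $2k-1$.
\end{itemize}
Without this argument, or an equally concrete substitute, the proposal does not yet prove the lemma.
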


    \begin{proof}
        Suppose, to the contrary, that $G$ contains an induced copy $H$ (say) of $\Psi_1$ or $\Psi_2$, labeled as shown in Figure~\ref{graphOf8vIntro}a. Note that $a_1a_6,a_2a_5,a_0a_7\notin E(G)$ and 
        $dist_{H}(a_1,a_6)=dist_{H}(a_2,a_5)=dist_{H}(a_0,a_7)=3$.
       Since $G$ is a maximal $\{C_3,\ldots, C_{2k-1}\}$-free graph, it follows that there
   exist three $(a_1,a_6),(a_2,a_5),(a_0,a_7)$-paths of length $2k-2$ in $G$, and we denote them by $P_{a_1a_6},P_{a_2a_5},P_{a_0a_7}$, respectively.

\begin{claim}\label{8vNoai}
For each $P_{a_ia_j}\in \{P_{a_1a_6},P_{a_2a_5},P_{a_0a_7}\}$, we have $V(H)\cap V(P_{a_ia_j})=\{a_i,a_j\}$.
\end{claim}

 \begin{proof}

Since  every vertex of $H$ is contained  in some $(a_1,a_6)$-path of length three in $H$, it follows that
$V(H)\cap V(P_{a_1a_6})= \{a_1,a_6\}$ by the assumption of odd girth.
Similarly, $V(H)\cap V(P_{a_2a_5})\subseteq \{a_2,a_5,a_7\}$ 
because   every vertex in $V(H)\setminus \{a_7\}$ is contained in  some $(a_2,a_5)$-path of length three in $H$.
 Moreover, we have $V(H)\cap V(P_{a_2a_5})= \{a_2,a_5\}$. Indeed, suppose that 
$a_7$ is on the path $P_{a_2a_5}$. If the sub-path $a_7P_{a_2a_5}a_5$ of $P_{a_2a_5}$
has odd length, then 
the odd path $a_7P_{a_2a_5}a_5$ has length at most $2k-3$, but this together with the path $a_5a_6a_7$ yields an odd cycle of length at most $2k-1$, a contradiction.
If the path $a_7P_{a_2a_5}a_5$ has even length, then
the  sub-path $a_2P_{a_2a_5}a_7$ of $P_{a_2a_5}$ has even  length at most $2k-4$, but this together with the path $a_2a_3a_6a_7$ yields an odd cycle of length at most $2k-1$, a contradiction.

Since every vertex in $V(H)\setminus \{a_2\}$ lies on some $(a_0,a_7)$-paths of length three in $H$, we conclude that 
$V(H)\cap V(P_{a_0a_7})\subseteq \{a_0,a_2,a_7\}$ by the assumption of odd girth.
Similarly to in the previous case, one can show that
 $V(H)\cap V(P_{a_0a_7})= \{a_0,a_7\}$. This proves the claim. 
 \end{proof}

\begin{claim}\label{8v2k-2disjoint}
The paths $P_{a_0a_7},P_{a_1a_6},P_{a_2a_5}$
are pairwise vertex-disjoint. 
\end{claim}
\begin{proof}
Note that $G[\{a_0,a_1,a_4,a_5,a_6,a_7\}]$ is an induced copy of $C_{6,1}$ with the rim cycle $a_0a_1a_4a_7a_6a_5a_0$ and diagonal chord
$a_4a_5$.
By Lemma~\ref{C61hasphi4k3}, $P_{a_0a_7}$ and $P_{a_1a_6}$ cannot intersect.
Notice also that
$G[\{a_0,a_1,a_2,a_3,a_5,a_6\}]$ is an induced copy of $C_{6,1}$ with rim cycle $a_0a_1a_2a_3a_6a_5a_0$ and diagonal chord
$a_0a_3$. Again by   Lemma~\ref{C61hasphi4k3},
$P_{a_2a_5}$ and $P_{a_1a_6}$ cannot intersect. Thus, we shall prove that
 $P_{a_0a_7}$ and $P_{a_2a_5}$ cannot intersect.

Suppose that  $P_{a_0a_7}$ and $P_{a_2a_5}$
have a common vertex $u$. If the two paths $uP_{a_0a_7}a_7$ and $uP_{a_2a_5}a_5$ have different
parity, then $uP_{a_0a_7}a_7a_6a_5P_{a_2a_5}u,
uP_{a_2a_5}a_2a_1a_0P_{a_0a_7}u$ are two closed odd walks whose lengths sum up to $4k$. Consequently, one of the walks has length at most $2k-1$ which yields an odd cycle of length at most $2k-1$, a contradiction.
If the two paths $uP_{a_0a_7}a_7,uP_{a_2a_5}a_5$ have the same parity, then  the four  paths $uP_{a_0a_7}a_7,uP_{a_2a_5}a_5,a_0P_{a_0a_7}u$ and $a_2P_{a_2a_5}u$ have the same parity. Together with the paths $a_2a_3a_4a_7$ and $a_0a_5$, this results in two closed odd walks $uP_{a_2a_5}a_2a_3a_4a_7P_{a_0a_7}u$ and $uP_{a_2a_5}a_5a_0P_{a_0a_7}u$ whose lengths sum up to $4k$. Consequently, one of the walks has length at most $2k-1$ which yields  an odd cycle of length at most $2k-1$, a contradiction. This proves the claim.
\end{proof}

Let $P_{a_0a_7}=v_0v_1v_2\ldots v_{2k-3}v_{2k-2}$ with $a_0=v_0,a_7=v_{2k-2}$, let 
$ P_{a_1a_6}=u_0u_1u_2\ldots u_{2k-3}\\u_{2k-2}$ with $a_1=u_0,a_6=u_{2k-2}$, and let
  $P_{a_2a_5}=w_0w_1w_2\ldots w_{2k-3}w_{2k-2}$
  with $a_2=w_0,a_5=w_{2k-2}$.
Define $\Psi=P_{a_0a_7}\cup P_{a_2a_5}\cup P_{a_1a_6}\cup H$, as shown in Figure~\ref{graphOf8vIntro}b.
By Claims~\ref{8vNoai} and~\ref{8v2k-2disjoint}, $\Psi$ has exactly $6k-1$ vertices.

    \begin{claim}\label{8vertexfour}
        For any vertex $x$ in $G$, we have $|N(x)\cap V(\Psi)|\leq 4$. 
    \end{claim}
    \begin{proof}
         Suppose, 
         to the contrary, that there exists a vertex $x$ in $G$ with five neighbors, say $y_1,\ldots,y_5$,
         in $\Psi$.
Let $C:=a_0P_{a_0a_7}a_7a_4a_1P_{a_1a_6}a_6a_3a_0$ be a cycle of length $4k$ in $\Psi$ shown in Figure~\ref{graphOf8v'sPhi4k3}a.
Clearly, $C\cup\{a_0a_1,a_3a_4,a_6a_7\}$ is a copy of $\Phi_{4k,3}$ with rim cycle $C$.
 By Lemma~\ref{phi4k3ism4k},  $G[V(C)]$ is a copy of $M_{4k}$. Therefore, by the odd girth assumption, $C$ contains at most three of $y_1,\ldots,y_5$.
Since the path $P_{a_2a_5}$ is contained in some odd cycles of length $2k+1$ in $\Psi$, it follows that $x$ has at most two neighbors on the path  $P_{a_2a_5}$. Moreover, since  $V(\Psi)=V(C)\cup V(P_{a_2a_5})$, it follows that  $C$ contains exactly three, of $y_1,\ldots,y_5$, say $y_1,y_2,y_3$, such that $\{y_1,y_2,y_3\}$ is a diagonal triple
of $C$, and $P_{a_2a_5}$ contains $y_4,y_5$. Let $D:=a_1P_{a_1a_6}a_6a_3a_4a_1$ be an odd cycle of length $2k+1$ in $G[V(C)]$. Note that $V(C)=V(D)\cup V(P_{a_0a_7})$. Since 
$y_1,y_2,y_3\in V(C)$ and 
$P_{a_0a_7}$ is contained in some odd cycle of length $2k+1$ in $\Psi$, it follows that $P_{a_0a_7}$ 
contains at most two of $y_1,y_2,y_3$ and therefore $D$ contains at least one of $y_1,y_2,y_3$.


    \begin{figure}[h]
    \centering

    \begin{subfigure}{0.45\textwidth}
    \centering
    \begin{tikzpicture}[scale=0.8]
    \coordinate (a_5) at (0,0);
    \coordinate (a_0) at (3,0);
    \coordinate (a_1) at (4.5,1.732);  
    \coordinate (a_2) at (3,3.464);  
    \coordinate (a_3) at (0,3.464);
    \coordinate (a_6) at (-1.5,1.732);
    \coordinate (a_7) at (0,1.732);
    \coordinate (a_4) at (0.8,1.732);
    
    \draw[black] (a_5) -- (a_0); 
    \draw[black] (a_1) -- (a_2) -- (a_3) -- (a_6) -- (a_5);

    \draw[black] (a_5) -- (a_4);
    \draw[dashed] (a_2) -- (a_7);

    \draw[red,line width=1.5pt](-1.53,1.732)arc(180:1:3.03);
    \draw[cyan,line width=1.5pt](-1.47,1.732)arc(180:1:2.97);
    
    \draw[black](a_2)arc(9:-90:3);
    \draw[red,line width=1.5pt](a_7)arc(90:270:1.2);
    \coordinate (w1) at (0,-0.668);
    \coordinate (w2) at (2.332,-0.668);
    \draw[red,line width=1.5pt] (w1) -- (w2);
    \draw[red,line width=1.5pt](a_0)arc(0:-90:0.668);

    \draw[cyan,line width=1.4pt] (4.5,1.772) -- (0.8,1.772);
    \draw[red,line width=1.4pt] (4.5,1.692) -- (0.8,1.692);

    \draw[red,line width=1.5pt] (a_4) -- (a_7);
    \draw[red,line width=1.5pt] (a_0) -- (0,3.534);
    
    \draw[cyan,line width=1.5pt] (0,3.524) -- (-1.5,1.792);
    \draw[red,line width=1.5pt] (0,3.414) -- (-1.5,1.682);

    \draw[cyan,line width=1.5pt] (0.01,3.514) -- (0.81,1.782);
    \draw[black,line width=1.5pt] (-0.0462,3.454) -- (0.7492,1.732);

    \draw[black,line width=1.5pt] (a_1) -- (a_0);
    \draw[black,line width=1.5pt] (a_7) -- (a_6);

    \foreach \point/\pos in {a_2/above left,a_0/below right, a_1/right, a_6/left} {
        \node[circle, fill=black, inner sep=1.5pt] at (\point) {};
        \node[\pos] at (\point) {$\point$};
    }
    \node[circle, fill=black, inner sep=1.5pt] at (a_3) {};
    \node[below,xshift=-2pt,yshift=-6pt] at (a_3) {$a_3$};

    \node[above,xshift=-2pt,yshift=6pt] at (a_5) {$a_5$};
    
    \foreach \point in {a_4,a_5} {
        \node[circle, draw=black, fill=black, inner sep=1.5pt, minimum size=4pt] at (\point) {};
    }
    
    \foreach \point in {a_4,a_7} {
        \node[circle, draw=black, fill=black, inner sep=1.5pt, minimum size=4pt] at (\point) {};
    }
    \node[below right] at (a_4) {$a_4$};
    \node[below] at (a_7) {$a_7$};

    \node[red] at (-2,0) {\Large\textbf{$C$}};
    \node[cyan] at (-2,4) {\Large\textbf{$D$}};

    \node[below] at (1.5,-1) {(a)};
    \end{tikzpicture}
\end{subfigure}
\hfill
\begin{subfigure}{0.45\textwidth}
    \centering
    \begin{tikzpicture}[scale=0.8]
    \coordinate (a_5) at (0,0);
    \coordinate (a_0) at (3,0);
    \coordinate (a_1) at (4.5,1.732);  
    \coordinate (a_2) at (3,3.464);  
    \coordinate (a_3) at (0,3.464);
    \coordinate (a_6) at (-1.5,1.732);
    \coordinate (a_7) at (0,1.732);
    \coordinate (a_4) at (0.8,1.732);
    
    \draw[black] (a_5) -- (a_0) -- (a_1);
    \draw[black] (a_2) -- (a_3) -- (a_6);
    \draw[black] (a_4) -- (a_6);
    
    \draw[black] (a_3) -- (a_0);
    \draw[dashed] (a_2) -- (a_7);

    \draw[red,line width=2pt](a_2)arc(9:-90:3);
    \draw[black](a_7)arc(90:270:1.2);
    \coordinate (w1) at (0,-0.668);
    \coordinate (w2) at (2.332,-0.668);
    \draw[black] (w1) -- (w2);
    \draw[black](a_0)arc(0:-90:0.668);

    \draw[red,line width=2pt] (a_4) -- (a_5);
    \draw[red,line width=2pt] (a_2) -- (a_3) -- (a_6);

    \draw[red,line width=1.5pt](-1.53,1.732)arc(180:1:3.03);
    \draw[cyan,line width=1.5pt](-1.47,1.732)arc(180:1:2.97);

    \draw[cyan,line width=1.2pt] (4.5,1.752) -- (0.8,1.752);
    \draw[red,line width=1.2pt] (4.5,1.712) -- (0.8,1.712);

    \draw[cyan,line width=1.5pt] (0,3.514) -- (-1.5,1.782);
    \draw[red,line width=1.5pt] (0,3.414) -- (-1.5,1.682);

    \draw[cyan,line width=1.5pt] (0.01,3.514) -- (0.81,1.782);
    \draw[black,line width=1.5pt] (-0.0462,3.454) -- (0.7492,1.732);

    \draw[black,line width=1.5pt] (a_1) -- (a_2);
    \draw[black,line width=1.5pt] (a_5) -- (a_6);

    \foreach \point/\pos in {a_7/below,a_0/below right,a_1/right, a_2/above left, a_6/left} {
        \node[circle, fill=black, inner sep=1.5pt] at (\point) {};
        \node[\pos] at (\point) {$\point$};
    }
    \node[circle, fill=black, inner sep=1.5pt] at (a_3) {};
    \node[below,xshift=-2pt,yshift=-6pt] at (a_3) {$a_3$};
    \node[above,xshift=-2pt,yshift=6pt] at (a_5) {$a_5$};
    
    \foreach \point in {a_4,a_5} {
        \node[circle, draw=black, fill=black, inner sep=1.5pt, minimum size=4pt] at (\point) {};
    }
    \node[below right] at (a_4) {$a_4$};

\node[cyan] at (-2,4) {\Large\textbf{$D$}};
    \node[red] at (3.3,1) {\Large\textbf{$C'$}};

    \node[below] at (1.5,-1) {(b)};
    \end{tikzpicture}
\end{subfigure}
\caption{$\Psi$}
\label{graphOf8v'sPhi4k3}
\end{figure}
    

On the other hand, let $C':=a_2P_{a_2a_5}a_5a_4a_1P_{a_1a_6}a_6a_3a_2$ be a cycle of length $4k$ in $\Psi$ shown in Figure~\ref{graphOf8v'sPhi4k3}b. Clearly, $C'\cup\{a_1a_2,a_3a_4,a_5a_6\}$ is a copy of $\Phi_{4k,3}$ with rim cycle $C'$.
 By Lemma~\ref{phi4k3ism4k},  $G[V(C')]$ is also a copy of $M_{4k}$, which implies that $C'$ contains $y_4,y_5$, and at most one of $y_1,y_2,y_3$.
Since $V(C')=V(D)\cup V(P_{a_2a_5})$ and $y_4,y_5\in V(P_{a_2a_5})$, it follows that $D$ contains at most one of $y_1,y_2,y_3$.  So, $D$ contains exactly one of $y_1,y_2,y_3$, say $y_3$. Since $y_3,y_4,y_5\in V(C')$, $\{y_3,y_4,y_5\}$ forms a diagonal triple of $C'$. 
Without loss of generality, we may assume that $\{y_4,y_5\}=\{w_{j-1},w_{j+1}\}$ for some $j\in\{1,2,\ldots,2k-3\}$, then  
$y_3$ is on the path $P_{a_1a_6}$, and therefore, 
$y_3=u_j$. This implies that $D$ cannot contain $y_1$ or $y_2$
because $D$ contains exactly one of $y_1,y_2,y_3$.
Recall that $\{y_1,y_2,y_3\}$ is  a diagonal triple of $C$.
Then $y_1,y_2$ are on the path $P_{a_0a_7}$,
and  $\{y_1,y_2\}=\{v_{j-1},v_{j+1}\}$.
Thus, the path $v_{j-1}P_{a_0a_7}a_0a_5P_{a_2a_5}w_{j+1}$
has length $(j-1)+1+(2k-2-(j+1))=2k-3$. Together with the path $v_{j-1}xw_{j+1}$, this yields an odd cycle of length $2k-1$, a contradiction.
This proves the claim.
        \end{proof}

        It follows from Claim~\ref{8vertexfour} that 
        $4n=\frac{4n}{6k-1}(6k-1)<|V(\Psi)|\delta(G)\leq\sum_{x\in V(\Psi)}|N(x)|=\sum_{x\in V(G)}|N(x)\cap V(\Psi)|\leq4n$, a contradiction.
    \end{proof}
    

We remark that the following result is concluded from the proof of \cite[Theorem 1.2]{MessutiC2k+1}.

\begin{lem}[\cite{MessutiC2k+1}]\label{NoC61AndTIsC2k+1}
      Let $k\geq 2$ be an integer, and 
    let $G$ be a maximal $\{C_3,\ldots,C_{2k-1}\}$-free graph. If $G$ contains neither an induced 
     copy of $C_{6,1}$ nor a tetrahedron in $\mathscr{T}_1$ (not necessarily  induced), then $G$ is homomorphic to $C_{2k+1}$.
    \end{lem}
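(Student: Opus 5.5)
The plan is to reconstruct the Messuti--Schacht argument: build the homomorphism to $C_{2k+1}$ out of a shortest odd cycle, and use the two forbidden configurations to rule out every obstruction. Since $G$ is not bipartite in the interesting case, fix an odd cycle $C=c_0c_1\ldots c_{2k}c_0$ of length $2k+1$. Such a cycle exists: if $G$ is bipartite we map it onto an edge of $C_{2k+1}$ and are done. Otherwise a shortest odd cycle is induced, has length at least $2k+1$, and maximality forces length exactly $2k+1$. Indeed, any non-adjacent pair at distance two on a longer shortest odd cycle must be joined by a short even path, and this produces a shorter odd cycle. The goal is to define $\varphi:V(G)\to V(C_{2k+1})=\mathbb{Z}_{2k+1}$ by a ``position relative to $C$'' rule and verify that edges go to edges.

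The key steps are as follows. First, by maximality, every vertex $v\notin V(C)$ lies within distance at most $k$ of $C$. More usefully, I would show that every vertex has a neighbour on some $(2k+1)$-cycle and that $G$ is covered by $(2k+1)$-cycles through each edge.

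Second, take a vertex $v$ with $N(v)\cap V(C)\neq\emptyset$. By the odd girth, $v$ has one neighbour on $C$, or two neighbours $c_{i-1},c_{i+1}$; in the latter case we set $\varphi(v)=i$. For a vertex $v$ with a single neighbour $u=c_i$, or at distance two from $C$, Lemma~\ref{xCycleIsT} produces paths to $c_{i\pm2}$ (respectively $c_{i\pm1}$) of length $2k-2$. These yield a tetrahedron centred at $u$. If $\ell(P_{vu})=1$, this tetrahedron lies in $\mathscr{T}_1$, which is excluded. Hence vertices adjacent to exactly one vertex of $C$ cannot occur, so every vertex adjacent to $C$ has exactly two neighbours $c_{i\pm1}$ there. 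In the distance-two case, the tetrahedron has at least two spokes of length one. Its structure, the two facial cycles through $u$, then lets us replace $C$ by an adjacent $(2k+1)$-cycle and propagate.

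Third, I would extend $\varphi$ by propagation. Every vertex lies on a $(2k+1)$-cycle reachable from $C$ by a chain of $(2k+1)$-cycles, each obtained from the previous one by swapping a single vertex $c_i$ for a vertex $v$ with $N(v)\supseteq\{c_{i-1},c_{i+1}\}$. Along such a chain we assign labels consistently: the new vertex receives the label of the vertex it replaces.

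The main obstacle is well-definedness and edge-preservation: two chains must not give a vertex different labels, and the labels of adjacent vertices must differ by $\pm1$. I expect this to be exactly where the absence of an induced $C_{6,1}$ enters. Two vertices $v,v'$ replacing $c_i$ and $c_{i+1}$ respectively must be adjacent. Otherwise, together with $c_{i-1},c_i,c_{i+1},c_{i+2}$, they form a $C_6$ with one diagonal chord, and we check that no other chords are possible by odd girth. Conversely, an edge $vv'$ between two vertices whose labels differ by anything other than $\pm1$ would create either an odd cycle shorter than $2k+1$ (by Lemma~\ref{2C2k+1isspoke}-type length comparisons) or a tetrahedron in $\mathscr{T}_1$, via Lemma~\ref{4kIndOrDia}.

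I would first prove this local consistency for a single swap. Then I would argue globally: if an inconsistency existed, a shortest chain exhibiting it would yield either an induced $C_{6,1}$ or a $\mathscr{T}_1$ tetrahedron. This conclusion comes from the symmetric-difference and Theta-graph parity arguments of Lemma~\ref{P1P2P3P4property}.
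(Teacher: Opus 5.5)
Your two local facts are correct and are the right ingredients. First, by Lemma~\ref{xCycleIsT} with $\ell(P_{vu})=1$, no vertex outside a $(2k+1)$-cycle has exactly one neighbour on it. Second, two vertices $v,v'$ replacing $c_i$ and $c_{i+1}$ on the same cycle must be adjacent, since otherwise $v\,c_{i-1}\,c_i\,v'\,c_{i+2}\,c_{i+1}$ spans an induced $C_{6,1}$.

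The gap is the global step, which you flag as the main obstacle but do not prove. Your $C_{6,1}$ check only compares replacers with respect to one fixed cycle. After several swaps the current cycle is a different $(2k+1)$-cycle. A vertex labelled $i$ via one chain and $j\neq i$ via another, or two vertices labelled $i$ and $i+1$ via unrelated chains, are never compared by that check. Nothing in the proposal relates different chains. The sentence about a shortest inconsistent chain yielding $C_{6,1}$ or $\mathscr{T}_1$ through Theta-graph parity is an expectation, not an argument. Moreover, Lemma~\ref{4kIndOrDia}, which you invoke for edge-preservation, cannot be used here at all: it starts from a tetrahedron in $\mathscr{T}_1$ inside $G$, which the hypothesis excludes.

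The missing idea is an invariant: the labelled vertices should form a complete blow-up $V_0,\dots,V_{2k}$ of $C_{2k+1}$. Concretely, take a vertex-maximal blow-up containing $C$. The paper cites this lemma from Messuti--Schacht without proof, but it replays exactly this template for $M_{4k}$ in Section~5.

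Every transversal of the blow-up is a $(2k+1)$-cycle. So by odd girth there are no edges inside a class or between non-consecutive classes. Since maximality forces $G$ to be connected, some vertex $x$ outside the blow-up has a neighbour in it. Such an $x$ has neighbours in a single class or in two classes $V_{i-1},V_{i+1}$.
\begin{itemize}
\item Single class: applying Lemma~\ref{xCycleIsT} to a transversal through a neighbour $u$ gives a member of $\mathscr{T}_1$.
\item Two classes: either $x$ is complete to $V_{i-1}\cup V_{i+1}$ and can be added to $V_i$, contradicting maximality, or some $y\in V_{i+1}$, say, is not adjacent to $x$. In the latter case take $a\in N(x)\cap V_{i-1}$, $b\in N(x)\cap V_{i+1}$, $c\in V_i$ and $d\in V_{i+2}$. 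The cycle $xacydbx$ has the diagonal $cb$, the other diagonals $xy$ and $ad$ are non-edges, and any remaining chord would close a triangle. So it is an induced $C_{6,1}$.
\end{itemize}
Hence $G$ is the blow-up itself. Well-definedness and edge-preservation then come for free, and neither the distance-two case nor any Theta-graph argument is needed.

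Separately, your argument that a $(2k+1)$-cycle exists fails as written. A pair $c_0,c_2$ at distance two gives nothing, because $c_0c_1c_2$ is already the short even path that maximality provides, and combining it with the long arc just returns $C$. Use $c_0,c_3$ instead. An even $(c_0,c_3)$-path of length at most $2k-2$, together with $c_0c_1c_2c_3$, is an odd closed walk of length at most $2k+1<\ell(C)$.
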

    
Now we present the proof of  Theorem~\ref{MainResult}.
   	\begin{proof}[Proof of Theorem~\ref{MainResult}] 
        Let $G$ be a graph in $\mathcal{G}_{n,k}$. 
        If $G$ contains neither an induced 
     copy of $C_{6,1}$ nor a tetrahedron in $\mathscr{T}_1$ (not necessarily  induced), then by Lemma~\ref{NoC61AndTIsC2k+1},   
     $G$ is homomorphic to $C_{2k+1}$, and hence homomorphic to $M_{4k}$.        
     Hence we may assume that $G$ contains an induced copy of $C_{6,1}$ or a tetrahedron in $\mathscr{T}_1$. By Corollaries~\ref{C} and~\ref{T}, $G$ contains an induced copy of $M_{4k}$.  We denote by $\mathcal{M}$  a vertex-maximal blow-up of $M_{4k}$ contained in $G$.
Choose a representative induced copy \(M = G[\{v_0, \ldots, v_{4k-1}\}]\) of \(M_{4k}\) in \(\mathcal{M}\), as labeled 
in Figure~\ref{graphOfIntroM4k+Phi4k3}a, and let  
 $V_0,\ldots, V_{4k-1}$ be the vertex classes of $\mathcal{M}$ such that $v_i$ corresponds to $V_i$.
Clearly, each $V_i$ is an independent set. We shall prove $G=\mathcal{M}$. 
Suppose not, and let $x$ be a vertex in $V(G)\setminus V(\mathcal{M})$.
If $x$  has no neighbor in $\mathcal{M}$ then $x$ would be disconnected from $\mathcal{M}$, which violates the maximality of $G$.
So, $x$ has at least one neighbor in $\mathcal{M}$.
        Owing to the odd girth assumption on $G$, $x$ can have at most three neighbors in   $M$, which implies that the neighbors of $x$ lie in at most three of the vertex classes of $\mathcal{M}$.
If they lie in three such classes, then those classes must be of the form $\{V_{i-1},V_{i+1},V_{i+2k}\}$ for some $i=0,1,\ldots,4k-1$.

    \begin{claim}\label{M4kLessThan3neighbors}
       The neighbors of $x$ in $\mathcal{M}$ cannot lie in exactly three vertex classes of $\mathcal{M}$.
    \end{claim}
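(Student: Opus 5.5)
Suppose, to the contrary, that the neighbours of $x$ in $\mathcal{M}$ lie in exactly three classes. By the remark before the claim, these classes are $V_{i-1},V_{i+1},V_{i+2k}$ for some $i$. By the symmetry of $M_{4k}$, which is vertex-transitive with rotations $j\mapsto j+1$ preserving diagonals, we may assume $i=0$. So $N(x)\cap V(\mathcal{M})\subseteq V_{4k-1}\cup V_1\cup V_{2k}$, and $x$ has a neighbour in each of these three classes.

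The aim is to show that $x$ is adjacent to \emph{every} vertex of $V_{4k-1}\cup V_1\cup V_{2k}$. Once this holds, $x$ can be added to $V_0$: its neighbourhood in $\mathcal{M}$ is then exactly that of a vertex of $V_0$, namely $V_{4k-1}\cup V_1\cup V_{2k}$. The enlarged graph is again a blow-up of $M_{4k}$, since $x$ has no neighbours in the other classes and is not adjacent to $V_0$ (no vertex of $V_0$ lies in those three classes). This contradicts the vertex-maximality of $\mathcal{M}$.

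To prove full adjacency, I would use the structure lemmas on a suitable copy of $M_{4k}$. Pick neighbours $p\in V_{4k-1}$, $q\in V_1$, $r\in V_{2k}$ of $x$. Choose an induced copy $M'$ of $M_{4k}$ inside $\mathcal{M}$ using $p,q,r$ as the representatives of these three classes, with arbitrary representatives elsewhere; every such transversal is an induced $M_{4k}$ because $\mathcal{M}$ is a blow-up. Replacing the representative $v_0'$ of $V_0$ by $x$ gives a cycle $C_x$ of length $4k$ (the rim with $x$ in place of $v_0'$) together with the chords $xr$, $v_1'v_{2k+1}'$ and $v_{4k-1}'v_{2k-1}'$. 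This is a copy of $\Phi_{4k,3}$. By Lemma~\ref{phi4k3ism4k}, $G[V(C_x)]$ is an induced $M_{4k}$.

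This alone only reproduces adjacencies we already have. The real work is showing that $x$ is adjacent to every other vertex $p'\in V_{4k-1}$ (and likewise for $V_1$ and $V_{2k}$). Take the transversal through $p'$, keeping $q$ and $r$. The cycle $x\,q\,v_2'\cdots v_{2k}'\cdots v_{4k-2}'\,p'$ would need the edge $xp'$, which is exactly what we are trying to prove, so this direct approach fails. Instead I would argue via maximality of $G$. If $xp'\notin E(G)$, there is an $(x,p')$-path of even length at most $2k-2$. Combined with the path $x\,q\,v_0'\,p'$ of length $3$ in $G$, or with $x\,r\,v_0'\,p'$, this yields an odd closed walk of length at most $2k+1$. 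Equality must then hold, so the path has length exactly $2k-2$ and avoids the relevant vertices.

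To reach a contradiction, I would imitate Lemma~\ref{C61hasphi4k3}: vertices $x,q,v_0',p',v_{4k-2}'$, and so on, should form an induced $C_{6,1}$-like configuration whose forced long paths, together with the $M_{4k}$, give a structure on $6k-1$ vertices in which every outside vertex has at most $4$ neighbours. The degree-sum count $\sum d>4n$ against $\le 4n$, as in Lemma~\ref{8vGraphNotExist}, then finishes the argument.

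I expect this extension step to be the main obstacle. Specifically, it requires identifying the right $(6k-1)$-vertex configuration, likely one of the forbidden graphs $\Psi_1,\Psi_2$ of Lemma~\ref{8vGraphNotExist} in the case $xp'\notin E(G)$. My first attempt would be to look for an induced $\Psi_1$ or $\Psi_2$ on $\{x,p,p',q,r\}$ together with three vertices of $\mathcal{M}$, and then invoke Lemma~\ref{8vGraphNotExist} directly.
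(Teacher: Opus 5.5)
Your reduction is the same as the paper's. By vertex-maximality of $\mathcal{M}$ there is some $y$ (your $p'$) in $V_{4k-1}\cup V_1\cup V_{2k}$ with $xy\notin E(G)$, and a contradiction must come from $y$. But the proposal stops exactly there, so as written the claim is not proved. The $\Phi_{4k,3}$ step yields nothing new, as you note yourself. The route via an $(x,p')$-path of length $2k-2$ and a $(6k-1)$-vertex degree count is only gestured at (``imitate Lemma~\ref{C61hasphi4k3}''), and no configuration or bound is actually established. Your final idea, an induced $\Psi_1$ or $\Psi_2$, is stated as something to look for rather than exhibited. The missing piece is the explicit forbidden configuration.

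That step is short. Take representatives $v_0'\in V_0$, $v_{2k-1}'\in V_{2k-1}$, $v_{2k+1}'\in V_{2k+1}$ and consider the eight vertices $x,y,p,q,r,v_0',v_{2k-1}',v_{2k+1}'$.

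The adjacencies among them are forced, for two reasons. First, $\mathcal{M}$ is induced in $G$ by the odd-girth condition. Two non-adjacent vertices of $M_{4k}$ are joined by an even path of length at most $2k-2$, and two vertices of one class have a common neighbour. This, not the blow-up structure alone, is why transversals are induced. Second, $x$ has no neighbour in $V_0\cup V_{2k-1}\cup V_{2k+1}$.

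Now split on the class of $y$:
\begin{itemize}
\item If $y\in V_{2k}$, then $(a_0,\dots,a_7)=(y,v_{2k+1}',q,v_0',r,v_{2k-1}',p,x)$ is an induced $\Psi_2$.
\item If $y\in V_{4k-1}$, then $(a_0,\dots,a_7)=(p,v_{2k-1}',y,v_0',r,x,q,v_{2k+1}')$ is an induced $\Psi_1$.
\item If $y\in V_1$, this follows from the previous case by the reflection $v_j\mapsto v_{-j}$.
\end{itemize}
Either outcome contradicts Lemma~\ref{8vGraphNotExist}. This is exactly the paper's proof, after the index shift $j\mapsto j+2k$. You do not need to prove full adjacency class by class or redo a $(6k-1)$-vertex counting argument, since that work is already packaged in Lemma~\ref{8vGraphNotExist}.
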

    \begin{proof}
        Without loss of generality, we may assume that the neighbors of $x$ in $\mathcal{M}$ belong to $V_{2k+1},V_{2k-1},V_0$, and that $v_{2k+1},v_{2k-1},v_0$ are  three such  neighbors. Then there exists  a vertex $y\in V_{2k+1}\cup V_{2k-1}\cup V_{0}$ such that $xy\notin E(G)$; otherwise, $x$ would belong to $\mathcal{M}$
        because  $\mathcal{M}$ is a vertex-maximal blow-up of $M_{4k}$.
        Note that the vertex set $\{x,y,v_{2k},v_0,v_{2k+1},v_1,v_{4k-1},v_{2k-1}\}$ induces a copy of $\Psi_2$ if $y\in V_0$, and a copy of $\Psi_1$ if $y\in V_{2k-1}\cup V_{2k+1}$. But this contradicts  Lemma~\ref{8vGraphNotExist}.
        This proves the claim.
    \end{proof}
    
    It follows from Claim~\ref{M4kLessThan3neighbors} that the neighbors of $x$ in $\mathcal{M}$ belong to at most two of the vertex classes of $\mathcal{M}$.
    We now distinguish two cases to obtain a contradiction.

{\bf \noindent Case 1.} The neighbors of $x$ in $\mathcal{M}$ belong to exactly one vertex class of $\mathcal{M}$.

Without loss of generality, we may assume that the neighbors of $x$ in $\mathcal{M}$ lie in $V_{2k}$, and that $v_{2k}$ is  the unique  neighbor of $x$ in $M$. Let $D_1:=v_0v_1v_{2k+1}v_{2k+2}\ldots v_{4k-1}v_0$. Clearly, $xv_{2k}v_0$ is a path of length two and $x$ has no neighbors in $D_1$. By Lemma~\ref{xCycleIsT}, there exist $(x,v_1),(x,v_{4k-1})$-paths, say $P_{xv_1},P_{xv_{4k-1}}$, respectively, of length $2k-2$ in $G$ such that 
$T:=D_1\cup P_{xv_1}\cup P_{xv_{4k-1}}\cup \{xv_{2k},v_{2k}v_0\}$ is a tetrahedron
in $\mathcal{T}_k$ with center $v_0$.
Let $D_2:=xP_{xv_1}v_1v_0v_{2k}x$ and $D_3:=xP_{xv_{4k-1}}v_{4k-1}v_0v_{2k}x$ be the other two odd cycles of length $2k+1$ in $T$, as shown in Figure~\ref{graphOfMainTheoremCase1}a. Clearly, $D_1,D_2,D_3$ are all facial cycles in $T$, and therefore, 
the three spokes of $T$ are $D_1\cap D_2,D_2\cap D_3$ and $D_1\cap D_3$. We denote by $a,b$ and $c$ the branch vertices lying on these spokes, respectively.



    \begin{claim}\label{InsectD2Only1}
      $V(P_{xv_1})\cap V(M)\subseteq \{v_1,\ldots,v_{2k-1}\}$ and   $V(P_{xv_{4k-1}})\cap V(M)\subseteq \{v_{2k+1},\ldots,v_{4k-1}\}$.   
      \end{claim}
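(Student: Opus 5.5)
By the symmetry $v_i\mapsto v_{-i}$ of $M$, which fixes $v_0$ and $v_{2k}$ and swaps $v_1,v_{4k-1}$, it suffices to prove $V(P_{xv_1})\cap V(M)\subseteq\{v_1,\ldots,v_{2k-1}\}$. Write $P:=P_{xv_1}$. It has length $2k-2$, and $D_2=xPv_1v_0v_{2k}x$ is a cycle of length $2k+1$. Hence $v_0,v_{2k}\notin V(P)$. We must also exclude the vertices $v_{2k+1},\ldots,v_{4k-1}$. The only tool needed is the odd girth: a short odd closed walk would contain a short odd cycle. Suppose $w=v_j\in V(P)$ with $2k+1\le j\le 4k-1$, and set $p:=\ell(xPw)$ and $q:=\ell(wPv_1)$, so that $p+q=2k-2$ and $p,q\ge1$.

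\textbf{First parity test (via $v_{2k}$).} The vertex $w=v_j$ is joined to $v_{2k}$ by the rim path $v_{2k}v_{2k+1}\ldots v_j$ of length $j-2k$. It is also joined by the path $v_jv_{j-2k}\ldots v_{2k}$, which uses the diagonal chord $v_jv_{j-2k}$ and has length $1+(4k-j)=4k+1-j$. These two lengths have opposite parity. The closed walk $xPw$ followed by a path from $w$ to $v_{2k}$ and the edge $v_{2k}x$ has length $p+1+d$, where $d\in\{j-2k,\,4k+1-j\}$. Choose $d$ so that this total is odd. Both choices satisfy $d\le 2k$, and $\min\{j-2k,\,4k+1-j\}\le k$.

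\textbf{Second parity test (via $v_0$).} Similarly, $wPv_1$ together with the edge $v_1v_0$ and a path from $v_0$ to $w$ forms a closed walk. The two available $(v_0,w)$-paths are $v_0v_{4k-1}\ldots v_j$, of length $4k-j$, and the path through the chord $v_0v_{2k}$ followed by $v_{2k}\ldots v_j$, of length $1+(j-2k)$. These also have opposite parity, so one choice makes the walk of length $q+1+d'$ odd.

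\textbf{Combining the two tests.} Add the two odd walks. Their total length is at most $p+q+2+d+d'$. Since the $(w,v_{2k})$-lengths sum to $2k+1$ and the $(v_0,w)$-lengths sum to $2k+1$, we can pair the choices. Call the two tests A and B, and suppose both walks have length at least $2k+1$. Then $d\ge 2k-p$ and $d'\ge 2k-q$, so $d+d'\ge 4k-(p+q)=2k+2$. On the other hand, the forced choices are determined by the parities of $p$ and $j$. A case check on these parities shows that the forced $d$ and $d'$ are complementary, namely $\{j-2k,\,4k-j\}$ or $\{4k+1-j,\,j-2k+1\}$. Their sum is then $2k$ or $2k+2$, and the case $2k+2$ requires equality throughout. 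The main obstacle is precisely this parity bookkeeping. In the tight equality case, I expect that the resulting cycle of length exactly $2k+1$ has a chord coming from $M$. Such a chord yields a shorter odd cycle, which gives the contradiction, just as in the chord arguments used in the proof of Lemma~\ref{phi4k3ism4k}. If this direct approach leaves a residual case, I would instead apply Lemma~\ref{2C2k+1isspoke} to $D_2$ and the $(2k+1)$-cycle $v_0v_1\ldots v_{2k}v_0$, which share the path $v_{2k}v_0v_1$. That lemma forces every common vertex to sit at matching distances along both cycles, and this pins $P$ to the top half $\{v_1,\ldots,v_{2k-1}\}$.
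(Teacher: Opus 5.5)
Your symmetry reduction and the first parity case are fine. With $d\equiv d'\equiv p \pmod 2$, the forced pair is $(j-2k,\,4k-j)$ when $p\equiv j$, which gives total length $4k$ and an immediate contradiction. The gap is the other case, $p\not\equiv j$, with forced pair $(4k+1-j,\,j-2k+1)$. There your two tests only show that both odd walks have length exactly $2k+1$, i.e.\ $p=j-2k-1$ and $q=4k-1-j$. That is not a contradiction, and ``I expect the cycle has a chord'' is precisely the step that still has to be proved.

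Your fallback does not cover this case either. Lemma~\ref{2C2k+1isspoke} applied to $D_2$ and $v_0v_1\ldots v_{2k}v_0$ only constrains common vertices of those two cycles. A vertex $v_j$ with $j\ge 2k+1$ does not lie on the second cycle, so the lemma says nothing about the case you are trying to exclude.

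The missing step is one line. In the tight case the test-B walk $wPv_1v_0v_{2k}v_{2k+1}\ldots v_j$ does have the chord $v_1v_{2k+1}$. The closed walk $v_1v_{2k+1}v_{2k+2}\ldots v_jPv_1$ then has odd length $q+(j-2k)=2k-1$, a contradiction.

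Better still, replace test B by closing $wPv_1$ with one of the two $(v_1,w)$-paths of $M$: either $v_1v_{2k+1}\ldots v_j$ (length $j-2k$) or $v_1v_0v_{4k-1}\ldots v_j$ (length $4k+1-j$). Since $p\equiv q$, the odd choice is forced to be the complement of the one in test A. The two odd walks then have total length $(p+q)+1+(2k+1)=4k$, so one of them has length at most $2k-1$, and there is no tight case at all.

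With that repair your argument is a self-contained parity count. The paper instead applies Lemma~\ref{2C2k+1isspoke} to $D_1$ and $D_2$, which share the edge $v_0v_1$; note that $D_1$ does contain $v_{2k+1},\ldots,v_{4k-1}$. This gives $\ell(v_jPv_1)=j-2k$. The paper then uses the chord $v_{2k}v_{2k+1}$ of the resulting closed walk $xv_{2k}v_0v_1v_{2k+1}\ldots v_jPx$ of length $2k+1$.
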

\begin{proof}
 By symmetry, it suffices to show that $V(P_{xv_1})\cap V(M)\subseteq \{v_1,\ldots,v_{2k-1}\}$. 
 Clearly, $v_0,v_{2k}\notin V(P_{xv_1})$
 by the assumption of odd girth.
 Suppose, to the contrary, that $v_{2k+i}\in V(P_{xv_1})$ for some  $1\leq i\leq 2k-1$.    
Since  $v_0v_1\in E(D_1)\cap E(D_2)$, 
by Lemma~\ref{2C2k+1isspoke}, 
the path $v_1v_{2k+1}\ldots v_{2k+i}$ and the path
$v_{2k+i}P_{xv_1}v_1$ have the same length by Lemma~\ref{2C2k+1isspoke}.
We replace
 $v_{2k+i}P_{xv_1}v_1$ of $D_2$ by
 $v_1v_{2k+1}\ldots v_{2k+i}$ and then obtain an odd walk  $W:=xv_{2k}v_0v_1v_{2k+1}v_{2k+2}\ldots v_{2k+i}P_{xv_1}x$ of length $2k+1$ in $G$. But then $v_{2k}v_{2k+1}$ is a chord in the walk $W$, which yields an odd cycle of length at most $2k-1$,
 a contradiction. This proves the claim.
 \end{proof}

\begin{figure}[h]
\centering
\begin{minipage}{0.37\textwidth}
\centering
\begin{tikzpicture}[scale=0.7]
\coordinate (v_{4k-1}) at (-1,0);
\coordinate (v_{2k+1}) at (4,0);
\coordinate (v_{2k}) at (4.5,1.732);  
\coordinate (v_{2k-1}) at (4,3.464);  
\coordinate (v_1) at (-1,3.464);
\coordinate (v_0) at (-2,1.732);


\draw[red,thick] (v_{2k}) -- (v_0);

\coordinate (v_2) at (0,3.464);  
\coordinate (v_{2k-2}) at (3,3.464);  

\coordinate (v_s) at (2,3.464);  
\coordinate (v_{4k-2}) at (0,0);  
\coordinate (v_{2k+2}) at (3,0);  
\coordinate (v_t) at (1.9,0);

\draw[thick] (v_{4k-1}) -- (v_{2k+1});

\coordinate (b) at (7,2.2); 
\coordinate (x) at (5.3,1.8818);

\draw[thick] (v_{2k+1}) -- (v_{2k});
\draw[thick, dashed] (v_{2k}) -- (v_{2k-1}) -- (v_s);

\draw[red,line width=1.5pt](b)arc(0:-135:3.031);
\draw[blue,line width=1.5pt](b)arc(20:133:3.13);

\coordinate (v_{4k-1}'') at (-1,-0.05);
\coordinate (v_{4k-1}') at (-1,0.05);
\coordinate (v_t'') at (2,-0.05);
\coordinate (v_t') at (2,0.05);
\coordinate (b') at (6.97,2.23);
\coordinate (b'') at (7.03,2.17);
\coordinate (v_{2k}') at (4.5,1.762);
\coordinate (v_{2k}'') at (4.5,1.702);
\coordinate (v_0') at (-2,1.762);
\coordinate (v_0'') at (-2,1.702);
\coordinate (v_{2k+1}') at (4,0.05);
\coordinate (v_{2k+1}'') at (4,-0.05);
\coordinate (v_1') at (-1,3.494);
\coordinate (v_1'') at (-1,3.434);

\coordinate (v_0''') at (-1.98,1.702);
\coordinate (v_0'''') at (-2.02,1.762);

\draw[dashed] (v_{4k-1}) -- (v_{2k-1});
\draw[blue,line width=1.5pt] (b') -- (v_{2k}');
\draw[red,line width=1.5pt] (b'') -- (v_{2k}'');

\draw[red,line width=1.5pt] (v_{4k-1}') -- (v_t');
\draw[blue,line width=1.5pt] (v_1) -- (v_s);

\draw[red,line width=1.5pt] (v_{2k}'') -- (v_0'') -- (v_{4k-1}'') -- (v_t'');
\draw[blue,line width=1.5pt] (v_{2k}') -- (v_0') -- (v_1');
\draw[green,line width=1.5pt] (v_{2k+1}') -- (v_1'') -- (v_0'');
\draw[green,line width=1.5pt] (v_0') -- (v_{4k-1}') -- (v_{2k+1}');

\foreach \point/\pos in {v_{4k-1}/below left, v_{2k+1}/below, v_{2k}/above, v_0/left} {
    \node[circle, fill=black, inner sep=1.5pt] at (\point) {};
    \node[\pos] at (\point) {$\point$};
}
\node[circle, draw=black, fill=black, inner sep=1.5pt, minimum size=4pt] at (b) {};
\node[right] at (b) {$b$};

\node[circle, draw=black, fill=black, inner sep=1.5pt, minimum size=4pt] at (v_1) {};
\node[above left] at (v_1) {$a=v_1$};

\foreach \point in {v_s,v_t,x} {
    \node[circle, draw=black, fill=black, inner sep=1.5pt, minimum size=4pt] at (\point) {};
}
\node[above] at (v_t) {$c$};
\node[above] at (x) {$x$};

%
%
%
%
%


\node[] at (1.5,-2) {(a)};
\node[] at (1,-0.8) {\Large\textcolor{red}{$D_3$}};
\node[] at (-3,2.7) {\Large\textcolor{green}{$D_1$}};
\node[] at (2.6,2.7) {\Large\textcolor{blue}{$D_2$}};
\end{tikzpicture}
\end{minipage}
\hspace{0.2\textwidth} 
\begin{minipage}{0.37\textwidth}
\centering
\begin{tikzpicture}[scale=0.7]

\coordinate (c) at (0,0);
\coordinate (b) at (8,0);
\coordinate (v_1) at (4,4);  
\coordinate (v_0) at (4,1.5); 
\coordinate (v_{2k}) at (5,1.125); 
\coordinate (v_{2k+1}) at (3.3,3.3);

\coordinate (v_1') at (3.97,4); 
\coordinate (v_1'') at (4.03,4); 
\coordinate (v_0') at (3.97,1.5); 
\coordinate (v_0'') at (4.03,1.5); 
\coordinate (y) at (7.93,0.03);
\coordinate (y') at (7.93,-0.03);
\coordinate (v_{4k-1}') at (0,0.03);
\coordinate (v_{4k-1}'') at (0,-0.03);

\draw[black, thick] (v_0) -- (c);
\draw[black, thick] (v_0) -- (b);
\draw[black, thick] (v_0) -- (v_1);

\draw[red, line width=1.5pt] (4,1.45) -- (0,-0.05);
\draw[red, line width=1.5pt] (0,0) -- (8,0);
\draw[red, line width=1.5pt] (8,-0.05) -- (4,1.45);

\draw[blue, line width=1.5pt] (8,0) -- (4,4);
\draw[blue, line width=1.5pt] (4.05,4) -- (4.05,1.5);
\draw[blue, line width=1.5pt] (4,1.55) -- (8,0.05);

\draw[green, line width=1.5pt] (3.95,4) -- (3.95,1.5);
\draw[green, line width=1.5pt] (4,1.55) -- (0,0.05);
\draw[green, line width=1.5pt] (0,0) -- (4,4);

\draw[] (v_{2k}) -- (v_{2k+1});


\foreach \point/\pos in {v_0/below,  v_1/above, v_{2k}/below, v_{2k+1}/above left} {
    \node[circle, fill=black, inner sep=1.5pt] at (\point) {};
    \node[\pos] at (\point) {$\point$};
}
\node[circle, fill=black, inner sep=1.5pt] at (b) {};
    \node[below right] at (b) {$b$};
\node[circle, fill=black, inner sep=1.5pt] at (c) {};
    \node[below left] at (c) {$c$};
\node[circle, fill=black, inner sep=1.5pt] at (6,0.75) {};
    \node[below] at (6,0.75) {$x$};
\node[circle, fill=black, inner sep=1.5pt] at (3,1.125) {};
    \node[below,yshift=-4pt] at (3,1.125) {\small$v_{4k-1}$};

\node[] at (4,-2) {(b)};
\node[] at (2.8,2.1) {\Large\textcolor{green}{$D_1$}};
\node[] at (4.9,2.4) {\Large\textcolor{blue}{$D_2$}};
\node[] at (4,-0.8) {\Large\textcolor{red}{$D_3$}};
\end{tikzpicture}

\end{minipage}

\caption{$T\cup\{v_{2k}v_{2k+1}\}$}
\label{graphOfMainTheoremCase1}
\end{figure}

\begin{claim}\label{abcPos}
   We have $a=v_1,c=v_{4k-1},b=x$. Consequently, the spokes of $T$ are $v_0v_1,v_0v_{4k-1},v_0v_{2k}x$.  
      \end{claim}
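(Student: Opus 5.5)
Throughout, $T$ has center $v_0$ and the three facial cycles $D_1,D_2,D_3$ all pass through $v_0$. The spokes of $T$ are therefore the paths $D_1\cap D_2$, $D_1\cap D_3$ and $D_2\cap D_3$, each starting at $v_0$. I will identify each spoke directly by intersecting the facial cycles, using Claim~\ref{InsectD2Only1} together with the fact that $D_1$ consists only of vertices of $M$ other than $v_{2k}$.

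First, $D_2\cap D_3$. Both cycles contain the path $xv_{2k}v_0$. We have $V(P_{xv_1})\cap V(M)\subseteq\{v_1,\dots,v_{2k-1}\}$ and $V(P_{xv_{4k-1}})\cap V(M)\subseteq\{v_{2k+1},\dots,v_{4k-1}\}$, so $P_{xv_1}$ and $P_{xv_{4k-1}}$ share no vertex of $M$. It remains to rule out that they share an initial segment beyond $x$. Suppose they did; then by the construction in Lemma~\ref{xCycleIsT} the spoke would be $v_0v_{2k}xw\ldots$ with branch vertex $b\ne x$. I will exclude this by Lemma~\ref{2C2k+1isspoke} and parity. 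The cycles $D_2$ and $D_3$ have length $2k+1$ and share the path $v_0v_{2k}x\cdots b$. The two remaining arcs $bP_{xv_1}v_1v_0$ and $bP_{xv_{4k-1}}v_{4k-1}v_0$ close up, together with $D_1$, into a rim of length $3(2k+1)-2(\ell_a+\ell_b+\ell_c)$. If $\ell_b\ge 3$, this rim has length at most $6k+3-2(3+1+1)=6k-7$. I expect this step to be the main obstacle. The alternative route is to argue via the degree count used in Lemma~\ref{Tspoke1}: with $\ell_b\ge 2$ and the other two spokes as computed below, the tetrahedron lies in $\mathscr{T}_1$, and Lemma~\ref{T1spokeisM4k} then forces an $M_{4k}$ containing $x$ adjacent to $v_{2k}$. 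One checks that $x$ would then have a neighbor in a class of $\mathcal{M}$ other than $V_{2k}$, which contradicts the Case~1 assumption. Either way, I conclude $D_2\cap D_3=v_0v_{2k}x$, so $b=x$.

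Next, $D_1\cap D_2$. Here $D_1=v_0v_1v_{2k+1}\ldots v_{4k-1}v_0$, and $D_2$ contains $v_0v_1$ and $v_0v_{2k}$. Since $v_{2k}\notin V(D_1)$, the common path of $D_1$ and $D_2$ through $v_0$ extends from $v_0$ only toward $v_1$. Extending past $v_1$ would require $v_{2k+1}\in V(P_{xv_1})$, which Claim~\ref{InsectD2Only1} forbids. Hence $D_1\cap D_2$ is the edge $v_0v_1$ and $a=v_1$. By the symmetric argument, $D_1\cap D_3$ cannot extend past $v_{4k-1}$, since that would need $v_{4k-2}\in V(P_{xv_{4k-1}})$, which is allowed by Claim~\ref{InsectD2Only1}. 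I will rule this out using the maximal-intersection choice made in Lemma~\ref{xCycleIsT} together with Lemma~\ref{2C2k+1isspoke}. If $v_{4k-2}$ lay on $P_{xv_{4k-1}}$, then $P_{xv_{4k-1}}$ and the rim path through $v_{4k-2}$ would have equal lengths to $v_{4k-1}$. I expect this to lead either to an odd cycle of length less than $2k+1$ using the chord $v_{2k-2}v_{4k-2}$, or to a contradiction with $xv_{2k}v_{2k-1}$ together with the diagonal chords of $M$. Hence $c=v_{4k-1}$.

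Combining the three steps gives the spokes $v_0v_1$, $v_0v_{4k-1}$ and $v_0v_{2k}x$, as claimed.
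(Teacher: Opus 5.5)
Your step $a=v_1$ is correct and coincides with the paper's argument, but the steps for $c$ and $b$ have genuine gaps. For $c=v_{4k-1}$, none of the local tools you name will exclude $v_{4k-2}\in V(P_{xv_{4k-1}})$. Lemma~\ref{2C2k+1isspoke} only returns a trivial equality of lengths. The chord $v_{4k-2}v_{2k-2}$ closes $xP_{xv_{4k-1}}v_{4k-2}v_{2k-2}v_{2k-1}v_{2k}x$ into a cycle of length exactly $2k+1$, which is no contradiction. The maximality choice in Lemma~\ref{xCycleIsT} exists to make the intersections with $D_1$ into paths, so if anything it pushes $P_{xv_{4k-1}}$ to run along $D_1$.

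Indeed, for $k=3$ take $P_{xv_{11}}=xpv_9v_{10}v_{11}$, with a new vertex $p$ adjacent only to $x$ and $v_9$. This creates no odd cycle shorter than $7$ and keeps $v_6$ as the only neighbor of $x$ in $M$. So the minimum degree condition has to enter. The paper brings it in through Lemma~\ref{T1spokeisM4k}. If $c\neq v_{4k-1}$, then the spokes at $b$ and at $c$ both have length at least two, so $T\in\mathscr{T}_1$. Its length-one spoke $v_0v_1$ lies in $D_1$ and $D_2$, hence $G[V(D_1)\cup V(D_2)]$ is an induced $M_{4k}$ with rim $D_1\oplus D_2$. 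In it $xv_{2k+2}$ is a diagonal chord, contradicting that $v_{2k}$ is the only neighbor of $x$ in $M$.

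For $b=x$, your parity route ends at a rim of length at most $6k-7$, which contradicts nothing. Your alternative route misapplies Lemma~\ref{T1spokeisM4k}: with $\ell_a=\ell_c=1$ the tetrahedron $T$ has only one spoke of length at least two, so it lies in $\mathscr{T}_2$, not in $\mathscr{T}_1$, and the lemma does not apply. (As written, your $b$-step also relies on the value of $c$ computed afterwards, while the $c$-step is itself unfinished.)

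The missing idea is to re-center. Suppose $b\neq x$, with $c=v_{4k-1}$ already established. Then $T\cup\{v_{2k}v_{2k+1}\}\setminus\{v_0v_1\}$ is a tetrahedron with the same rim, center $v_{2k}$, branch vertices $v_{2k+1},b,c$, and spokes $v_{2k}v_{2k+1}$, $v_{2k}xP_{xv_1}b$ and $v_{2k}v_0v_{4k-1}$. The last two spokes have length at least two, so this tetrahedron belongs to $\mathscr{T}_1$. The two facial cycles through its length-one spoke $v_{2k}v_{2k+1}$ together span exactly $V(D_1)\cup V(D_2)$. Lemma~\ref{T1spokeisM4k} then yields the same forbidden chord $xv_{2k+2}$.
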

\begin{proof}
By Claim~\ref{InsectD2Only1}, we have 
$D_1\cap D_2=v_0v_1$, and hence $a=v_1$ because $a$ is the branch vertex lying on the spoke $D_1\cap D_2$ in $T$.
Note that the rim cycle of $T$ is $C:=bP_{xv_1}v_1v_{2k+1}v_{2k+2}\ldots cP_{xv_{4k-1}}b$ and the spokes of $T$ are the edge $v_0v_1$ and the paths $v_0v_{2k}xP_{xv_1}b,v_0v_{4k-1}v_{4k-2}\ldots c$, as shown in Figure~\ref{graphOfMainTheoremCase1}b. Clearly, the spoke path $v_0v_{2k}xP_{xv_1}b$ in $T$ has length at least two.
Suppose first that $c\neq v_{4k-1}$.
Then 
$\ell(v_0v_{4k-1}P_{xv_{4k-1}}c)\geq 2$, and hence $T\in\mathscr{T}_1$.
Since the spoke $v_0v_1$ in $T$ is contained in two facial cycles $D_1,D_2$,
$G[V(D_1)\cup V(D_2)]$ is an induced copy of $M_{4k}$ by Lemma~\ref{T1spokeisM4k}, 
which implies that $xv_{2k+2}$ is a diagonal chord in 
 $G[V(D_1)\cup V(D_2)]$ shown in Figure~\ref{graphOfMainTheoremCase1}b.
 But this contradicts the fact that $v_{2k}$ is the unique neighbor of $x$ in $M$.

Now suppose that 
$x\neq b$.
Then 
$\ell(v_{2k}xP_{xv_1}b)\geq2$. Note that
$T\cup \{v_{2k}v_{2k+1}\}\setminus \{v_0v_1\}$ is a new tetrahedron in $\mathscr{T}_1$
 with rim cycle $C$, center $v_{2k}$ and branch vertices $v_{2k+1},b,c$. Then $v_{2k}v_{2k+1}$ is a spoke in this new tetrahedron. 
We conclude, again by Lemma~\ref{T1spokeisM4k},
that $G[V(D_1)\cup V(D_2)]$ is an induced copy of $M_{4k}$, which implies that $xv_{2k+2}$ is a diagonal chord in 
 $G[V(D_1)\cup V(D_2)]$. But this contradicts the fact that $v_{2k}$ is the unique neighbor of $x$ in $M$. This proves the claim.
\end{proof}

It follows from Claim~\ref{abcPos}
that the spokes of 
$T$ are $v_0v_1,v_0v_{4k-1},v_0v_{2k}x$,
and hence $D_1\cup D_2,D_1\cup D_3$ are copies of $\Phi_{4k,1}$.
 If $D_1\cup D_2$ is a diagonal $\Phi_{4k,1}$, then by Corollary~\ref{phi4k1isM4kcor},  $G[V(D_1)\cup V(D_2)]$ is 
 an induced copy of $M_{4k}$, which 
 implies that $xv_{2k+2}$ is a diagonal chord of $G[V(D_1)\cup V(D_2)]$. But this contradicts the fact that $v_{2k}$ is the unique neighbor of $x$ in $M$.
 So,  $D_1\cup D_2$ is not a diagonal  $\Phi_{4k,1}$. Then there exists a vertex $y$ in $G$ that has 
 three neighbors $z_1,z_2,z_3$ in $D_1\cup D_2$ such that $\{z_1,z_2,z_3\}$ is neither a diagonal triple of $D_1\oplus D_2$ nor a special triple of $D_1\cup D_2$.

 Without loss of generality, we may assume that $D_1$ contains exactly two of $z_i$, say $z_1,z_2$, and $D_2$ contains $z_3$.
If $\{z_1,z_2\} \cap \{v_0,v_1\}  \neq \emptyset$, then  by symmetry,
we may assume that $z_1 = v_0$. Since $dist_{D_1}(z_1,z_2)=2$,
it follows that 
 $z_2 \in \{v_{4k-2},v_{2k+1}\}$. Similarly,
 $z_3$ is either the neighbor of $v_1$ on $P_{xv_1}$, or $z_3=x$ as $dist_{D_2}(z_1,z_3)=2$. However, $\{z_1,z_2,z_3\}$ is a special triple of $D_1\cup D_2$ if $z_2=v_{4k-2}$ and $z_3=x$;
otherwise, it is a diagonal triple of $D_1\oplus D_2$, which contradicts our assumption. 


\begin{figure}[h]
\centering
\begin{subfigure}{0.48\textwidth}
\centering
\begin{tikzpicture}[scale=0.65]
\coordinate (v_{4k-1}) at (0,0);
\coordinate (x) at (9.6,0);
\coordinate (v_1) at (4.8,4);  
\coordinate (v_0) at (4.8,1.5); 
\coordinate (v_{2k}) at (7.2,0.75); 
\coordinate (v_{2k+1}) at (3.84,3.2); 

\coordinate (z_2) at (1.32,1.1);
\coordinate (z) at (1.8,1.5);
\coordinate (z_1) at (2.28,1.9);
\coordinate (y) at (0.84,2.3);
\coordinate (z_3) at (7.2,2);

\draw[black, thick] (z_2) -- (z_1) ;
\draw[] (y) arc (170:5:3.211);
\draw[black, thick] (v_{4k-1}) -- (x);
\draw[] (v_{2k}) -- (v_{2k+1});

\draw[brown, line width=1.5pt] (0.84,2.35) -- (2.28,1.95) -- (4.8,4.05) -- (7.2,2.03);
\draw[green, line width=1.8pt] (0.84,2.27) -- (1.284,1.1) -- (0,0.03);
\draw[green, line width=1.8pt] (0.036,-0.02) -- (4.8,1.45) -- (9.564,-0.02);
\draw[green, line width=1.8pt] (9.6,0.03) -- (7.2,2.03);
\draw[red, line width=1.8pt] (1.0506,2.1515) -- (1.464,1.062) -- (0.504,0.262) -- (4.872,1.63) -- (4.872,3.8) -- (7.08,1.9505);
\draw[blue, line width=1.5pt] (1.0506,2.1515) -- (2.316,1.8) -- (4.728,3.8) -- (4.728,1.63) -- (9.096,0.262) -- (7.08,1.9505);

\foreach \point/\pos in {v_0/below, v_1/above, v_{2k}/below, v_{2k+1}/above left} {
    \node[circle, fill=black, inner sep=1.5pt] at (\point) {};
    \node[\pos] at (\point) {$\point$};
}
\node[circle, fill=black, inner sep=1.5pt] at (z) {};
\node[circle, fill=black, inner sep=1.5pt] at (z_3) {};
\node[right,xshift=3pt] at (z_3) {$z_3$};
\node[circle, fill=black, inner sep=1.5pt] at (v_{4k-1}) {};
\node[below] at (v_{4k-1}) {$v_{4k-1}=c$};
\node[circle, fill=black, inner sep=1.5pt] at (x) {};
\node[below] at (x) {$x=b$};
\node[circle, fill=black, inner sep=1.5pt] at (y) {};
\node[above left] at (y) {\small$y$};
\node[circle, fill=black, inner sep=1.5pt] at (z_1) {};
\node[right,xshift=3pt,yshift=-1pt] at (z_1) {$z_1=v_j$};
\node[circle, fill=black, inner sep=1.5pt] at (z_2) {};
\node[left,xshift=-4pt,yshift=0.9pt] at (z_2) {\small$z_2=v_{j+2}$};

\coordinate (line4) at (4.32,0);
\coordinate (line5) at (4.8,0);
\coordinate (line6) at (5.28,0);
\foreach \point in {line4,line5,line6} {
    \node[circle, draw=black, fill=black, inner sep=0.8pt, minimum size=2pt] at (\point) {};
}

\node[] at (6,2.1) {\Large\textcolor{red}{$P_4$}};
\node[] at (4.08,0.6) {\Large\textcolor{green}{$P_2$}};
\node[] at (8.8,1.9) {\Large\textcolor{blue}{$P_3$}};
\node[] at (0.6,3.7) {\Large\textcolor{brown}{$P_1$}};

\node[] at (4.8,-1) {(a)};
\end{tikzpicture}
\end{subfigure}
\hfill
\begin{subfigure}{0.48\textwidth}
\centering
\begin{tikzpicture}[scale=0.65]  
\coordinate (v_{4k-1}) at (0,0);
\coordinate (x) at (9.6,0);
\coordinate (v_1) at (4.8,4);  
\coordinate (v_0) at (4.8,1.5); 
\coordinate (v_{2k}) at (7.2,0.75); 
\coordinate (v_{2k+1}) at (3.84,3.2); 

\coordinate (z_2) at (1.32,1.1);
\coordinate (z) at (1.8,1.5);
\coordinate (z_1) at (2.28,1.9);
\coordinate (y) at (0.84,2.3);
\coordinate (z_3) at (7.2,2);

\draw[black, thick] (z_2) -- (z_1) ;
\draw[black, thick] (z_2) -- (y) ;
\draw[black, thick] (y) -- (z_1);
\draw[black, thick] (v_1) -- (z_1);
\draw[black, thick] (z_2) -- (v_{4k-1}) -- (x) -- (z_3);
\draw[] (v_{2k}) -- (v_{2k+1});

\draw[gray, line width=1.8pt] (4.8,1.43) -- (0,-0.07);
\draw[gray, line width=1.8pt] (v_{4k-1}) -- (x);
\draw[gray, line width=1.8pt] (9.6,-0.07) -- (4.8,1.43);

\draw[cyan, line width=1.5pt] (x) -- (z_3);
\draw[cyan, line width=1.5pt] (7.2,1.93) -- (4.8,3.93);
\draw[cyan, line width=1.5pt] (4.86,4) -- (4.86,1.5);
\draw[cyan, line width=1.5pt] (4.8,1.57) -- (9.6,0.07);

\draw[red,thick] (y) arc (170:5:3.211);
\draw[red, line width=1.5pt] (y) -- (z_2) -- (v_{4k-1});
\draw[red, line width=1.5pt] (0,0.07) -- (4.8,1.57);
\draw[red, line width=1.5pt] (4.74,1.5) -- (4.74,4);
\draw[red, line width=1.5pt] (4.8,4.07) -- (7.2,2.07);

\foreach \point/\pos in {v_0/below, v_1/above, v_{2k}/below, v_{2k+1}/above left} {
    \node[circle, fill=black, inner sep=1.5pt] at (\point) {};
    \node[\pos] at (\point) {$\point$};
}
\node[circle, fill=black, inner sep=1.5pt] at (z) {};
\node[circle, fill=black, inner sep=1.5pt] at (z_3) {};
\node[right,xshift=3pt] at (z_3) {$z_3$};
\node[circle, fill=black, inner sep=1.5pt] at (v_{4k-1}) {};
\node[below] at (v_{4k-1}) {$v_{4k-1}$};
\node[circle, fill=black, inner sep=1.5pt] at (x) {};
\node[below] at (x) {$x$};
\node[circle, fill=black, inner sep=1.5pt] at (y) {};
\node[above left] at (y) {\small$y$};
\node[circle, fill=black, inner sep=1.5pt] at (z_1) {};
\node[below,xshift=2pt,yshift=-1pt] at (z_1) {$z_1$};
\node[circle, fill=black, inner sep=1.5pt] at (z_2) {};
\node[left,xshift=-4pt,yshift=0.9pt] at (z_2) {\small$z_2$};

\coordinate (line4) at (4.32,0);
\coordinate (line5) at (4.8,0);
\coordinate (line6) at (5.28,0);
\foreach \point in {line4,line5,line6} {
    \node[circle, draw=black, fill=black, inner sep=0.8pt, minimum size=2pt] at (\point) {};
}

\node[] at (4.08,0.6) {\Large\textcolor{gray}{$D_3$}};
\node[] at (8.8,1.9) {\Large\textcolor{cyan}{$D_2$}};
\node[] at (0.6,3.7) {\Large\textcolor{red}{$D_4$}};

\node[] at (4.8,-1) {(b)};

\end{tikzpicture}

\end{subfigure}
\caption{$T\cup\{yz_1,yz_2,yz_3,v_{2k+1}v_{2k+1}\}$}
\label{graphOfMainTheoremCase1P1P2P3P4}
\end{figure}

 So, $z_1,z_2$ are on the path $v_{2k+1}v_{2k+2}\ldots v_{4k-1}$, say $z_1=v_j$
 and $z_2=v_{j+2}$ for some $j\in\{2k+1,\ldots,4k-3\}$. 
Note that $y$ has at most two neighbors in an odd cycle of length $2k+1$ by the odd girth assumption.
 Since the odd cycles  $v_0v_{2k}v_{2k+1}\ldots v_{4k-1}v_0$ and $D_1$ both have length $2k+1$ and these two odd cycles contain both $z_1,z_2$,
 it follows that 
 $z_3$ is on the path $P_{xv_1}-\{v_1\}$.
Consider the four paths,
$$P_1=z_3P_{xv_1}v_1v_{2k+1}v_{2k+2}\ldots v_{j-1}z_1y,\text{   }P_2=yz_2D_1[z_2,c,v_0]v_0D_2[v_0,v_1,z_3]z_3,$$$$P_3=yz_1D_1[z_1,v_1,v_0]v_0D_2[v_0,x,z_3]z_3,\text{ and }P_4=yz_2D_1[z_2,v_{4k-1},v_0]v_0D_2[v_0,v_1,z_3]z_3,$$
as shown in Figure~\ref{graphOfMainTheoremCase1P1P2P3P4}a.
Let $D_1'$ be the cycle obtained from $D_1$ by replacing $v_{j+1}$ with $y$. Clearly, $D_1'$ has length $2k+1$.
Note that  $D_1'\cup D_2$ is  a $(2k+1)$-Theta graph, say $\Theta$. Clearly, $P_1,P_2$ are the two $(y,z_3)$-outer paths, and
$P_3,P_4$ are the two $(y,z_3)$-intersecting paths in $\Theta$. 
By Lemma~\ref{P1P2P3P4property}, we have that either  cycles $yP_1z_3y,yP_2z_3y$ both have length $2k+1$, or exactly one of the cycles $yP_3z_3y,yP_4z_3y$ has length $2k+1$ while the other has length $2k+3$.  If the first case holds, then $\{z_1,z_2,z_3\}$ is a diagonal triple of $D_1'\oplus D_2$. It is easy to see that $\{z_1,z_2,z_3\}$ is also a diagonal triple of $D_1\oplus D_2$, a contradiction. 
So,
one of the cycles $yP_3z_3y,yP_4z_3y$ must have length $2k+1$.
If $yP_3z_3y$ has length $2k+1$,  then $v_{2k}v_{2k+1}$ is a chord of $yP_3z_3y$, which yields an odd cycle of length $2k-1$, a contradiction. 
So, the cycle $yP_4z_3y$ has length $2k+1$. Set $D_4:=yP_4z_3y$.   Note that the graph $T':=D_2\cup D_3\cup D_4$ is a tetrahedron in $\mathscr{T}_1$ with rim cycle $z_3v_{j+1}v_{j+2}\ldots v_{4k-1}P_{xv_{4k-1}}xP_{xv_1}z_3$ $xP_{xv_1}z_3v_{j+1}v_{j+2}\ldots v_{4k-1}$, center $v_0$ and branch vertices $z_3,v_{4k-1},x$. Since the spoke $v_0v_{4k-1}$ of length one in $T'$ is contained in two facial cycles $D_3,D_4$,
it follows from 
 Lemma~\ref{T1spokeisM4k}
 that $G[V(D_3)\cup V(D_4)]$ is an induced copy  of $M_{4k}$. 
 If $j<4k-3$, then $v_{2k}v_{4k-2}$
 is a diagonal chord of $G[V(D_3)\cup V(D_4)]$, 
 which implies that 
 $v_{2k}v_{2k+1}\ldots v_{4k-2}v_{2k}$ is an odd cycle of length $2k-1$, a contradiction.
If $j=4k-3$, then $v_{2k}y$ is a diagonal chord of $G[V(D_3)\cup V(D_4)]$, 
 which implies that $v_{2k}v_{2k+1}\ldots v_{4k-3}yv_{2k}$ is an odd cycle of length $2k-1$, a contradiction.

 {\bf \noindent Case 2.} The neighbors of $x$ in $\mathcal{M}$ belong to exactly two vertex classes of $\mathcal{M}$.

Without loss of generality, we may assume that 
 $x$ has exactly two neighbors, say $y_1,y_2$, in $M$.
Owing to the odd girth assumption, we have $dist_{M}(y_1,y_2)=2$, and let $y_1yy_2$ be  a  path in $M$.   
Suppose first that $y_1yy_2$ does not contain any diagonal chord of $M$. Then $y_1y_2y_3$
belongs to the rim cycle $v_0\ldots v_{2k-1}v_{2k}\ldots v_{4k-1}v_0$ of $M$.
Without loss of generality, we may assume that $y_1=v_{2k-1},y=v_{2k},y_1=v_{2k+1}$. Since $k\geq 2$, if follows that $M$ contains at least three diagonal chords, which implies that
$G[V(M)\cup\{x\}\setminus \{v_{2k}\}]$
 contains a subgraph isomorphic to $\Phi_{4k,3}$. By Lemma~\ref{phi4k3ism4k}, $G[V(M)\cup\{x\}\setminus \{v_{2k}\}]$ is an induced copy of  $M_{4k}$, which implies that $xv_0$ is a diagonal chord in this new copy of  $M_{4k}$. But then
$x$ has at least three neighbors $v_0,v_{2k-1},v_{2k+1}$ in $M$, a contradiction.

So, $y_1yy_2$ contains exactly one diagonal chord, say $yy_2$, of $M$. Without loss of generality,
we may assume that $y_1=v_{2k-1},y=v_{2k},y_2=v_0$.
Let $D_1:=v_{2k}v_0\ldots v_{2k-1}v_{2k}$
and $D_2:=v_0v_{2k}\ldots v_{4k-1}v_0$ be two odd cycles of length $2k+1$ in $M$.
Note that $v_0$ is the only neighbor of $x$ in $D_2$, and $xv_0v_{4k-1}v_{4k-2},xv_0v_{2k}v_{2k+1}$ are two paths of length three. So, by Lemma~\ref{xCycleIsT}, there exist two paths $P_{xv_{2k+1}},P_{xv_{4k-2}}$ of length $2k-2$
such that $D_2\cup P_{xv_{4k-2}}\cup P_{xv_{2k+1}}\cup \{xv_0\} $ form a tetrahedron $T\in \mathscr{T}_1$
 with center $v_0$.
 Clearly, $xv_0$ is the  spoke of length one  in $T$, i.e., $x$ is a branch vertex of $T$. Let $v_s,v_t$ be other branch vertices of $T$  with $2k+1\leq t<s\leq 4k-2$.
Let $C_{xv_s}:=xP_{xv_{4k-2}}v_{4k-2}v_{4k-1}v_0x,C_{xv_t}:=xP_{xv_{2k+1}}v_{2k+1}v_{2k}v_0x$ be two odd cycles of length $2k+1$ in $T$. Clearly, the facial cycles of $T$ are $D_2,C_{xv_s},C_{xv_t}$.
Let
$C:=C_{xv_s}\oplus C_{xv_t},P_{xv_s}=xP_{xv_{4k-2}}v_s,P_{xv_t}=xP_{xv_{2k+1}}v_t$. Since the spoke
$xv_0$ of $T$ is contained in the two facial cycles $C_{xv_s},C_{xv_t}$,
$G[V(C)]$ is an induced copy of 
$M_{4k}$ by Lemma~\ref{T1spokeisM4k}.

\begin{claim}\label{InsectD_1Only1Case2}
      $V(P_{xv_{2k+1}})\cap V(D_1)=\emptyset$ and $V(P_{xv_{4k-2}})\cap V(D_1)=\emptyset$.  
\end{claim}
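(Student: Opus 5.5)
The plan is to argue by contradiction, treating the two paths in parallel. Each time we exhibit two odd cycles of length $2k+1$ that share an edge at $v_0$. Lemma~\ref{2C2k+1isspoke} then fixes the lengths of the relevant subpaths. Finally, one diagonal chord of $M$ closes an odd walk of length $2k-1$, which contains an odd cycle of length at most $2k-1$ and contradicts the odd girth assumption.

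\emph{Endpoints and $v_0,v_{2k}$.} First, $x\notin V(D_1)$. Next, since $T$ is a tetrahedron with center $v_0$ and facial cycle $D_2$, the path $P_{xv_{2k+1}}$ meets $D_2$ only in the spoke segment $v_t\ldots v_{2k+1}$. Likewise $P_{xv_{4k-2}}$ meets $D_2$ only in $v_s\ldots v_{4k-2}$, where $2k+1\le t<s\le 4k-2$. Hence neither path contains $v_0$ or $v_{2k}$. It therefore remains to exclude a vertex $v_i$ with $1\le i\le 2k-1$ on either path.

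\emph{The path $P_{xv_{2k+1}}$.} Suppose $v_i\in V(P_{xv_{2k+1}})$ with $1\le i\le 2k-1$. The odd cycles $C_{xv_t}$ and $D_1$ both have length $2k+1$ and share the edge $v_0v_{2k}$. Orient them so that $v_0,v_{2k},v_i$ appear in this order on both: in $C_{xv_t}$ one goes $v_0,v_{2k},v_{2k+1},\ldots,v_i,\ldots,x$, and in $D_1$ one goes $v_0,v_{2k},v_{2k-1},\ldots,v_i,\ldots,v_1$. By Lemma~\ref{2C2k+1isspoke}, the $(v_i,v_0)$-segments have equal length, so
$$\ell(v_iP_{xv_{2k+1}}x)+1=i.$$
Now recall $xv_{2k-1}\in E(G)$. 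The closed walk $xv_{2k-1}v_{2k-2}\ldots v_iP_{xv_{2k+1}}x$ has length
$$1+(2k-1-i)+(i-1)=2k-1,$$
which is odd. This gives a contradiction.

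\emph{The path $P_{xv_{4k-2}}$.} Suppose $v_i\in V(P_{xv_{4k-2}})$ with $1\le i\le 2k-1$. Here $D_1$ does not share an edge with $C_{xv_s}$, so instead use the odd cycle $D':=v_0v_1\ldots v_{2k-1}xv_0$ of length $2k+1$. It shares the edge $v_0x$ with $C_{xv_s}$. Orient the two cycles as $v_0,x,\ldots,v_i,\ldots,v_{4k-2},v_{4k-1}$ and $v_0,x,v_{2k-1},\ldots,v_i,\ldots,v_1$. By Lemma~\ref{2C2k+1isspoke}, the $(v_i,v_0)$-segments have equal length, so
$$\ell(v_iP_{xv_{4k-2}}v_{4k-2})+2=i.$$
Then use the diagonal chord $v_{4k-1}v_{2k-1}$ of $M$. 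The closed walk $v_iP_{xv_{4k-2}}v_{4k-2}v_{4k-1}v_{2k-1}v_{2k-2}\ldots v_i$ has length
$$(i-2)+1+1+(2k-1-i)=2k-1,$$
which is odd. This again gives a contradiction.

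\emph{Expected obstacle.} The main point needing care is checking the hypothesis of Lemma~\ref{2C2k+1isspoke} that $v_0,v_{2k},v_i$ (respectively $v_0,x,v_i$) appear in the same cyclic order on both cycles. The paths $P_{xv_{2k+1}}$ and $P_{xv_{4k-2}}$ are internally arbitrary, but the shared edge, together with the position of $v_i$ strictly between the shared edge and the far end, forces this order. The boundary cases $i=1$ and $i=2k-1$ should be re-checked, though the length computations above remain valid there.
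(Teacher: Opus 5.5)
Your proof is correct. For $P_{xv_{2k+1}}$ it is the paper's argument: Lemma~\ref{2C2k+1isspoke}, applied to $D_1$ and $C_{xv_t}$ along the common edge $v_0v_{2k}$, gives $\ell(xP_{xv_{2k+1}}v_i)=i-1$. The paper then closes with the diagonal chord $v_1v_{2k+1}$, since the closed walk $v_1\ldots v_iP_{xv_{2k+1}}v_{2k+1}v_1$ has length $2k-1$. You close with the edge $xv_{2k-1}$ instead, which works equally well.

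The real difference is $P_{xv_{4k-2}}$. The paper handles it with ``by symmetry''. However, the two paths do not play symmetric roles with respect to $D_1$: $C_{xv_t}$ shares the edge $v_0v_{2k}$ with $D_1$, while $C_{xv_s}$ shares no edge with $D_1$. So the paper's argument does not transfer verbatim, and a different auxiliary $(2k+1)$-cycle is needed. Your cycle $D'=v_0v_1\ldots v_{2k-1}xv_0$ shares the edge $v_0x$ with $C_{xv_s}$. Combined with the chord $v_{4k-1}v_{2k-1}$, it correctly supplies this missing case. The cycle $v_{4k-1}v_0v_1\ldots v_{2k-1}v_{4k-1}$, which shares $v_{4k-1}v_0$ with $C_{xv_s}$, would serve equally well.

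Two minor remarks:
\begin{itemize}
\item The cyclic-order hypothesis of Lemma~\ref{2C2k+1isspoke} is automatic once both cycles traverse the shared edge in the same direction, so your ``expected obstacle'' is not an issue.
\item For $i=1$ in the second case, your identity would force a negative path length. So the contradiction there comes directly from the lemma, not from the closed walk.
\end{itemize}
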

\begin{proof}
By symmetry, it suffices to show that $V(P_{xv_{2k+1}})\cap V(D_1)=\emptyset$. Clearly, $v_0,v_{2k}\notin V(P_{xv_{2k+1}})$. 
 Suppose, to the contrary,  that $v_{i}\in V(P_{xv_{2k+1}})$ for some $1\leq i\leq 2k-1$.    
Since  $v_{2k}v_{0}\in E(D_1)\cap E(C_{xv_t})$,  
the paths $v_0v_{1}\ldots v_{i}$ and $v_{0}xP_{xv_{2k+1}}v_i$ have the same length by Lemma~\ref{2C2k+1isspoke}.
So replacing
 $v_{0}xP_{xv_{2k+1}}v_i$ of $C_{xv_t}$ by
 $v_0v_{1}\ldots v_{i}$ yields an odd closed walk $v_0v_1\ldots v_iP_{xv_{2k+1}}v_{2k+1}v_{2k}v_0$ of length $2k+1$. But then $v_{1}v_{2k+1}$ is a chord in this walk, which yields an odd cycle of length at most $2k-1$,
 a contradiction. This proves the claim.
 \end{proof}


\begin{figure}[h]
\centering
\begin{minipage}{0.37\textwidth}
\centering
\begin{tikzpicture}[scale=0.7]
\coordinate (v_{4k-1}) at (0,0);
\coordinate (v_{2k+1}) at (5,0);
\coordinate (v_{2k}) at (6,1.732);  
\coordinate (v_{2k-1}) at (5,3.464);  
\coordinate (v_1) at (0,3.464);
\coordinate (v_0) at (-1,1.732);

\coordinate (v_2) at (1,3.464);  
\coordinate (v_{2k-2}) at (4,3.464);  

\coordinate (v_s) at (2,0);  
\coordinate (v_{4k-2}) at (1,0);  
\coordinate (v_{2k+2}) at (4,0);  
\coordinate (v_t) at (3,0); 

\coordinate (x) at (7,2.2); 
\coordinate (v) at (2.7,-1.144);

\draw[green,line width=1.5pt] (v) -- (3,0.05) -- (4.98,0.05)  -- (5.9513,1.732)  -- (v_{2k-1});
\draw[green,line width=1.5pt] (v_{2k-1}) -- (v_1)  ;

\draw[green,line width=1.5pt] (v_1)-- (-0.9513,1.732)  -- (0.02,0.05)  -- (2,0.05);
\draw[green,line width=1.5pt](2.75,-1.12) arc(-135:-163:2.75);

\draw[red,line width=1.5pt] (-0.02,-0.05) -- (2,-0.05);
\draw[red,line width=1.5pt] (3,-0.05) -- (5.02,-0.05) -- (6.0487,1.732)  -- (-1.0487,1.732) -- (-0.02,-0.05);
\draw[red,line width=1.5pt] (-1,1.725) -- (6,1.725);

\draw[dashed] (x) -- (v_0);
\draw[red,line width=1.5pt](x)arc(30:-163:2.75);
\draw[red,line width=1.5pt](x)arc(10:-133:2.41);

\foreach \point/\pos in {v_{4k-1}/below left, v_{2k}/right, v_{2k-1}/above right, v_1/above left, v_0/left} {
    \node[circle, fill=black, inner sep=1.5pt] at (\point) {};
    \node[\pos] at (\point) {$\point$};
}

\foreach \point in {v_{2k+1}, v_{4k-2},v_t,v_s} {
    \node[circle, draw=black, fill=black, inner sep=1.5pt, minimum size=4pt] at (\point) {};
}

\node[below] at (v_{4k-2}) {$v_{4k-2}$};
\node[above] at (v_s) {$v_s$};
\node[above] at (v_t) {$v_t$};
\node[below] at (v_{2k+1}) {\tiny$v_{2k+1}$};

\draw[dashed] (v_{2k+1}) -- (v_1);
\draw[dashed] (v_{4k-1}) -- (v_{2k-1});

\node[circle, draw=black, fill=black, inner sep=1.5pt, minimum size=4pt] at (x) {};
\node[above] at (x) {$x$};
\draw[thick,dashed] (x) -- (v_{2k-1});

\node[circle, draw=black, fill=black, inner sep=1.5pt, minimum size=4pt] at (v) {};
\node[below left] at (v) {$v$};


\node[] at (2.5,-3) {(a) $H$};
\node[] at (1,-1.2) {\Large\textcolor{red}{$C$}};
\node[] at (4.7,-1.6) {{$P_{xv}$}};
\node[] at (6.2,-0.2) {{$P_{xv_t}$}};
\node[] at (-2,2.7) {\Large\textcolor{green}{$C'$}};
\end{tikzpicture}
\end{minipage}
\hspace{0.1\textwidth} 
\begin{minipage}{0.37\textwidth}
\centering
\begin{tikzpicture}[scale=0.7]
\coordinate (v_{4k-1}) at (0,0);
\coordinate (v_{2k+1}) at (5,0);
\coordinate (v_{2k}) at (6,1.732);  
\coordinate (v_{2k-1}) at (5,3.464);  
\coordinate (v_1) at (0,3.464);
\coordinate (v_0) at (-1,1.732);
\coordinate (x) at (7,2.2); 

\coordinate (v_2) at (1,3.464);  
\coordinate (v_{2k-2}) at (4,3.464);  

\coordinate (v) at (2,0);  
\coordinate (v_s) at (1.5,0);  
\coordinate (v_{4k-2}) at (1,0);  
\coordinate (v_{2k+2}) at (4,0);  
\coordinate (v_t) at (2.7,0); 

\coordinate (z_2) at (2.5,3.464); 
\coordinate (y) at (3.6,1.464); 

\coordinate (z_4) at (6.66,0.4); 
\coordinate (z_5) at (7.324,1.1);

\draw[green,line width=1.5pt] (4.98,0.05)  -- (5.9513,1.732)  -- (4.98,3.414) -- (0.02,3.414) -- (-0.9513,1.732)  -- (0.02,0.05)  -- (4.98,0.05);

\draw[brown, line width=1.3pt] (7,2.04) arc (27:-161.3:2.694);
\draw[brown, line width=1.6pt] (x)arc(10:-135:2.52);
\draw[brown, line width=1.5pt] (2,-0.05) -- (2.7,-0.05);

\draw[blue,line width=1.5pt] (2,-0.05) -- (-0.02,-0.05) -- (-1.0487,1.732) -- (-0.02,3.514) -- (5.02,3.514) -- (x) ;

\draw[blue,line width=1.2pt](x)arc(30:-163:2.77);

\draw[red] (y) -- (v_{2k+2});
\draw[red] (y) -- (z_2);
\draw[red] (y) -- (v_2);
\draw[red] (y) -- (z_4);
\draw[red] (y) -- (z_5);

\draw[dashed] (v_{2k}) -- (v_0);
\draw[dashed] (v_{4k-1}) -- (v_{2k-1});
\draw[dashed] (v_{2k-2}) -- (v_{4k-2});
\draw[dashed] (x) -- (v_0);

\foreach \point/\pos in {z_4/below, z_5/right, y/below left, z_2/above, v_{4k-1}/below left, v_{2k}/right, v_{2k-1}/above right, v_1/above left, v_0/left} {
    \node[circle, fill=black, inner sep=1.5pt] at (\point) {};
    \node[\pos] at (\point) {$\point$};
}

\foreach \point in {v_s, v_{2k+1}, v_{4k-2}, v_{2k+2}, v_2, v_{2k-2},v,v_t} {
    \node[circle, draw=black, fill=black, inner sep=1.5pt, minimum size=4pt] at (\point) {};
}

\node[below] at (v_{4k-2}) {$v_{4k-2}$};
\node[below] at (v_{2k+2}) {\small$z_{3}$};
\node[above] at (v_t) {$v_t$};
\node[above] at (v_s) {$v_s$};
\node[above] at (v_2) {$z_1$};
\node[above] at (v) {$v$};
\node[above] at (v_{2k-2}) {$v_{2k-2}$};
\node[right] at (v_{2k+1}) {\tiny$v_{2k+1}$};

\node[circle, draw=black, fill=black, inner sep=1.5pt, minimum size=4pt] at (x) {};
\node[above] at (x) {$x$};

\node[] at (2.5,-3) {(b) $H\cup\{yz_1, yz_2,yz_3,yz_4,yz_5\}$};
\node[] at (1,-1.2) {\Large\textcolor{blue}{$C''$}};
\node[] at (0.5,2.7) {\Large\textcolor{green}{$C'$}};
\node[] at (4,-1.2) {\large\textcolor{brown}{$D$}};

\end{tikzpicture}

\end{minipage}

\caption{The proof of Case 2}
\label{graphOfMainTheoremCase2}
\end{figure}

Recall that $G[V(C)]$ is an induced copy of $M_{4k}$, and let
$v_{t}v$ be the diagonal chord in this $M_{4k}$. Since $D_2$ and $C_{xv_t}$ are two odd cycles of length $2k+1$, it follows that $vv _t$ cannot be the chord in these two odd cycles, which implies that
$v$ is on the path $P_{xv_{s}}-\{x\}$. 
Let 
$D_2':=v_tvP_{xv_{4k-2}}v_{4k-2}v_{4k-1}v_0v_{2k}\ldots v_t$ and $D:=v_tP_{xv_{t}}xP_{xv_{s}}vv_t$
be two odd cycles of $G[V(C)]$.
Since $vv_t$ is a diagonal chord of 
 $G[V(C)]$, it follows that $D_2',D$ both have length $2k+1$ and 
$D\cup D_2'=C\cup\{vv_t\}$.
Possibly $D_2=D_2'$. 
Let $C'=D_1\oplus D_2'$, let $P_{xv}:=xP_{xv_s}v$ be the sub-path  of $P_{xv_s}$, and 
let $H=G[V(C')\cup V(P_{xv})\cup V(P_{xv_t})]$, as shown in Figure~\ref{graphOfMainTheoremCase2}a.
Note that
$M':=G[V(C')]$ contains a $\Phi_{4k,3}$ with rim cycle $D_1\oplus D_2'$ and diagonal chords $v_1v_{2k+1},v_0v_{2k},v_{2k-1}v_{4k-1}$. 
By Lemma~\ref{phi4k3ism4k},  $M'$ is an induced copy of $M_{4k}$.
Note that $D$ is an odd cycle of length $2k+1$ in $T$.
It follows from Claim~\ref{InsectD_1Only1Case2} that
$H$ has exactly $6k-1$ vertices.

\begin{claim}\label{four}
For any vertex $y$ in $G$, $y$ has at most four neighbors in $H$. 
\end{claim}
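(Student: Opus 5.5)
We follow the template of Claim~\ref{8vertexfour}. The vertex set of $H$ splits as $V(H)=V(C')\cup V(P_{xv})\cup V(P_{xv_t})$, where $M'=G[V(C')]$ is an induced $M_{4k}$. By Claim~\ref{InsectD_1Only1Case2} and the structure of $T$, the inner vertices of $P_{xv}\cup P_{xv_t}$ are disjoint from $C'$. Moreover, $G[V(C)]$ is a second induced copy of $M_{4k}$ containing $P_{xv_s}\cup P_{xv_t}\cup\{x\}$ together with the path of $D_2$ between $v_t$ and $v_s$.

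Suppose some $y$ has five neighbors $y_1,\dots,y_5$ in $H$. By the odd girth assumption, $y$ has at most three neighbors in any induced $M_{4k}$, and if it has three they form a diagonal triple. It also has at most two neighbors on any $(2k+1)$-cycle, and two such neighbors are at distance two on that cycle.

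First, $C'$ contains at most three of the $y_i$. The path $P_{xv}\cup P_{xv_t}=D-\{v_t v\}$ (with $x$ inside) lies on the odd cycle $D$ of length $2k+1$, so it contains at most two of them. Hence exactly three, say $y_1,y_2,y_3$, lie in $C'$ and form a diagonal triple of $C'$, and $y_4,y_5$ lie on $D$ outside $C'$ at distance two along $D$.

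Second, we apply the same counting to the $M_{4k}$ induced on $V(C)$. This set contains $y_4,y_5$ (unless one of them is $v$ or $v_t$, which are handled similarly) together with the part of $C'$ lying in $D_2'$. So $y$ has at most one further neighbor there, and that neighbor forms a diagonal triple of $C$ with $y_4,y_5$. Consequently, at least two of $y_1,y_2,y_3$ lie in $D_1-\{v_0,v_{2k}\}$, i.e.\ on $v_1\cdots v_{2k-1}$, and they have the form $v_{i-1},v_{i+1}$. The third lies in $D_2'$ and is diagonal to $v_i$.

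Now compute the positions using path lengths. If $y_4,y_5$ are $w_{j-1},w_{j+1}$ on $D$, then the diagonal partner in $G[V(C)]$ is determined by $j$. In $C'$ the third neighbor is $v_{2k+i}$ or the corresponding vertex of $D_2'$. These two descriptions must coincide.

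Finally, form an odd closed walk through $y$. It runs from $v_{i\pm1}$ along $D_1$ to $v_{2k-1}$ or $v_0$, then across the edge $v_{2k-1}x$ or $v_0x$ and along $P_{xv}$ or $P_{xv_t}$ to $y_4$ or $y_5$, and returns through $y$. We then check that its length is at most $2k-1$, mirroring the final computation in Claim~\ref{8vertexfour}. The parity works out because the diagonal-triple conditions in both $M_{4k}$'s tie $i$ to $j$. The edges $xv_0$ and $xv_{2k-1}$ supply the short connections between $D_1$ and $D$.

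The main obstacle is this last step. The bookkeeping of indices between the two Möbius ladders $G[V(C')]$ and $G[V(C)]$ is delicate, and it requires separate treatment of degenerate positions: $y_4$ or $y_5$ equal to $x$, $v$ or $v_t$, and the possibility $D_2=D_2'$. I would first fix coordinates for $C$ with $x$ and $v_0$ as diagonal partners. Then I would express every vertex of $H$ by its index in $C$ and in $C'$, and reduce each case to a single short-odd-cycle length count.
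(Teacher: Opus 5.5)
Your first two steps are sound, and they reach the right intermediate configuration. For some $2\le i\le 2k-2$, the vertex $y$ is adjacent to $v_{i-1},v_{i+1}$. It is adjacent to exactly one vertex $y_3\in V(D_2')\setminus\{v,v_t\}$, namely the diagonal partner of $v_i$ in $G[V(C')]$. It is also adjacent to two vertices $y_4,y_5\in V(D)\setminus\{v,v_t\}$ with $\{y_3,y_4,y_5\}$ a diagonal triple of $C$.

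\textbf{The gap.} It lies in your last step, and it is not merely unfinished index bookkeeping. The configuration above is compatible with both ladders $G[V(C')]$ and $G[V(C)]$. The closed walks you propose, which pass from $D_1$ to $D$ through $xv_0$ or $xv_{2k-1}$, never produce a short odd cycle.

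Let $p_r$ be the vertex of $P_{xv_{4k-2}}$ at distance $r$ from $x$, so $p_{2k-2}=v_{4k-2}$. Suppose $y_3=v_{2k+i}$ lies on $v_{2k}\cdots v_t$.
\begin{itemize}
\item In $G[V(C)]$ the diagonal partner of $v_{2k+i}$ is $p_{i+1}$, so $\{y_4,y_5\}=\{p_i,p_{i+2}\}\subseteq V(P_{xv})$.
\item Your walks through $v_0x$ then have lengths $2i+2$, $2i+4$, $2i+4$, $2i+6$, all even.
\item Your walks through $v_{2k-1}x$ have lengths $2k+1$, $2k+3$, $2k+3$, $2k+5$.
\item The mirror case fails in the same way. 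There $y_3=p_i$ lies on $vP_{xv_{4k-2}}v_{4k-2}v_{4k-1}v_0$ and $y_4,y_5$ lie on $P_{xv_t}$. Walks through $v_0x$ have length at least $2k+1$, and walks through $v_{2k-1}x$ have even length.
\end{itemize}
So the step ``check that its length is at most $2k-1$'' cannot succeed.

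\textbf{The missing idea.} The short odd cycle avoids $x$ and runs through the far end of a path of length $2k-2$. In the case above,
$y\,p_{i+2}\,p_{i+3}\cdots p_{2k-2}\,v_{4k-1}\,v_0\,v_1\cdots v_{i-1}\,y$ has length $2k-1$.

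The paper captures this with a third Möbius ladder. The $4k$-cycle $C''=xP_{xv_{4k-2}}v_{4k-2}v_{4k-1}v_0v_1\cdots v_{2k-1}x$ carries the three consecutive diagonals $xv_0$, $v_{4k-1}v_{2k-1}$, $v_{4k-2}v_{2k-2}$. Hence $G[V(C'')]$ is an induced $M_{4k}$ by Lemma~\ref{phi4k3ism4k}. It contains the four neighbors $v_{i-1},v_{i+1},p_i,p_{i+2}$ of $y$, which is a contradiction.

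In the mirror case, write $q_r$ for the vertex of $P_{xv_{2k+1}}$ at distance $r$ from $x$. Use instead $xP_{xv_{2k+1}}v_{2k+1}v_{2k}v_{2k-1}\cdots v_1v_0x$, with diagonals $xv_{2k-1}$, $v_{2k}v_0$, $v_{2k+1}v_1$. Equivalently, use the $(2k-1)$-cycle $y\,q_{2k-i+1}\cdots q_{2k-2}\,v_{2k}\,v_{2k-1}\cdots v_{i+1}\,y$.

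The paper's own case split is slightly different: it first writes $V(H)=V(C)\cup\{v_1,\dots,v_{2k-1}\}$, then asks whether $P_{xv_t}$ contains a neighbor of $y$. The decisive ingredient is this ladder $M''$, or the explicit routes through $v_{4k-2}v_{4k-1}v_0$ and $v_{2k+1}v_{2k}v_{2k-1}$. Your plan never introduces either.
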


\begin{proof}
Suppose not, and let $y$ be a vertex in $G$ with at least five neighbors, say $z_1,z_2,z_3,z_4,z_5$, in $H$.
Note that $V(H)=V(C)\cup \{v_1,\ldots,v_{2k-1}\}$.
Owing to the odd girth assumption, we conclude that 
$y$ has exactly two neighbors, say $z_1,z_2$, on the path $v_1v_2\ldots v_{2k-1}$ and 
exactly three neighbors, say $z_3,z_4,z_5$, in $C$ because $G[V(C)]$ is an induced copy of $M_{4k}$.
Since $G[V(C')]$ is an induced copy $M'$ of $M_{4k}$, it follows that $C'$ contains at most three of $z_1,z_2,z_3,z_4,z_5$
and already includes $z_1,z_2$. Note that $D_2'$  is contained in $M'$ and $z_1,z_2\notin V(D_2')$. This implies that $D_2'$ contains at most one of $z_3,z_4,z_5$. Notice also that $V(H)=\{v_1,\ldots,v_{2k-1}\}\cup V(D_2')\cup V(D)$. By the odd girth assumption, 
$D$ contains exactly two of $z_3,z_4,z_5$, say $z_4,z_5$,
and therefore, $D_2'$ contains $z_3$. 
Clearly, $z_3$
is 
on the path
$v_{2k}v_{2k+1}\ldots v_t$ or  $vP_{xv_{4k-2}}v_{4k-2}v_{4k-1}v_0$. Without loss of generality, we may assume that
$z_3$ is on the path $v_{2k}v_{2k+1}\ldots v_t$.
Note that $C'':=xP_{xv_{4k-2}}v_{4k-2}v_{4k-1}v_0v_1\ldots v_{2k-1}x$ is an even cycle of length $4k$, 
as shown in Figure~\ref{graphOfMainTheoremCase2}b. Let $M''=G[V(C'')]$. Clearly,
$M''$ contains a $\Phi_{4k,3}$ with rim cycle $C''$ and  diagonal chords $xv_0,v_{4k-1}v_{2k-1},v_{4k-2}v_{2k-2}$. By Lemma~\ref{phi4k3ism4k},
$M''$ is an induced copy of $M_{4k}$.
Since $C_{xv_t}$
is an odd cycle of length $2k+1$ and contains the neighbor $z_3$ of $y$,
it follows that $P_{xv_t}$ contains at most one of $z_4,z_5$.
Suppose first that $P_{xv_t}$ contains no neighbors of $y$. Then $P_{xv}$ contains 
 $z_4,z_5$.
But now $M''$
contains $z_1,z_2,z_4,z_5$, which yields 
an odd cycle of length smaller than $2k+1$, a contradiction.

Now suppose that $P_{xv_t}$ contains 
exactly one of $z_4,z_5$, say $z_4$.
Then $P_{xv}$ contains the neighbor $z_5$
of $y$. Since $dist_{D}(z_4,z_5)=2$, it follows that 
$z_4,z_5$ are neighbors of $x$ in $D$.
 Since we already know that $z_1,z_2,z_5$ are neighbors of $y$ in $M''$, it follows that $\{z_1,z_2,z_5\}$ is a  diagonal triple of $C''$ with $dist_{C''}(z_1,z_2)=2$. Since $M''$ is a copy of $M_{4k}$, it follows that $z_5v_1$
is a diagonal chord in $M''$ because the sub-path $v_1v_2\ldots v_{2k-1}xz_5$ of $C''$ has length $2k-2$.  However, this implies that $\{z_1,z_2\}=\{v_0,v_2\}$, a contradiction.
 This proves the claim.
\end{proof}

By Claim~\ref{four}, we have
 $4n=\frac{4n}{6k-1}(6k-1)<|V(H)|\delta(G)\leq\sum_{x\in V(H)}|N(x)|=\sum_{x\in V(G)}|N(x)\cap V(H)|\leq4n$, and we obtain a final contradiction.
 This completes the proof of Theorem~\ref{MainResult}
    \end{proof}

\section{Concluding Remarks}

It would be interesting to extend our result to graphs with smaller minimum degree. For every \(k \ge 4\), Letzter and Snyder~\cite{k=3case} asked whether there exists some \(\varepsilon > 0\) such that every  \(n\)-vertex graph $G$ with odd girth at least $2k+1$ and minimum degree \(\delta(G) \ge \bigl(\frac{1}{2k-1} + \varepsilon\bigr)n\)  is homomorphic to \(F_{\ell,k}\) for some \(\ell \ge 1\). 
On the other hand, 
 Ebsen and Schacht \cite{EbsenSchacht} constructed a specific blow‑up of a tetrahedron in \(\mathscr{T}_1\) that is not homomorphic to any such \(F_{\ell,k}\) and has minimum degree slightly smaller than \(\frac{n}{2k-2}\). Consequently, 
they asked the following for every $k\geq 4$: is
every  \(n\)-vertex graph $G$ with odd girth at least $2k+1$ and minimum degree \(\delta(G) \ge \frac{n}{2k-2}\) necessarily 
homomorphic to \(F_{\ell,k}\) for some \(\ell \ge 1\)?

\end{document}